\newcommand{\C}{\mathbb{C}}
\newcommand{\TT}{\mathcal{T}}
\newcommand{\XX}{\mathcal{X}}
\newcommand{\OO}{\mathcal{O}}
\newcommand{\DD}{\mathcal{D}}
\newcommand{\LL}{\mathcal{L}}
\newcommand{\PP}{\mathcal{P}}
\newcommand{\SG}{\mathfrak{S}}
\newcommand{\bfa}{{\boldsymbol{a}}}
\newcommand{\bfb}{{\boldsymbol{b}}}
\newcommand{\bfc}{{\boldsymbol{c}}}
\newcommand{\bfd}{{\boldsymbol{d}}}
\newcommand{\bft}{{\boldsymbol{t}}}
\newcommand{\bfm}{{\boldsymbol{m}}}
\newcommand{\bfi}{{\boldsymbol{i}}}
\newcommand{\MP}{{\rm M\!P\!}}
\newcommand{\MPT}{\mathcal{MPT}}
\newcommand{\SPT}{\mathcal{SPT}}
\newcommand{\SSPT}{\mathcal{SSPT}}
\newcommand{\SSMPT}{\mathcal{SSMPT}}
\newcommand{\ttau}{{\tilde{\tau}}}
\newcommand{\tpi}{{\tilde{\pi}}}
\newcommand{\tgamma}{{\tilde{\gamma}}}
\newcommand{\tmu}{{\tilde{\mu}}}
\newcommand{\tnu}{{\tilde{\nu}}}
\newcommand{\trho}{{\tilde{\rho}}}
\newcommand{\tB}{{\tilde{B}}}
\newcommand{\tS}{{\tilde{S}}}
\newcommand{\tR}{{\tilde{R}}}
\newcommand{\tT}{{\tilde{T}}}
\newcommand{\tU}{{\tilde{U}}}
\newcommand{\multi}[1]{\left\{\mskip-5mu\left\{#1\right\}\mskip-5mu\right\}}
\newcommand{\ov}[1]{\bar{#1}}
\newcommand{\ovb}[1]{\left[\ov{#1}\right]}
\newcommand{\ovov}[1]{\bar{\bar{#1}}}
\newcommand{\ovovb}[1]{\left[\ovov{#1}\right]}
\newcommand{\abs}[1]{\left| #1\right |}
\newcommand{\inline}[1]{\begin{array}{c}#1\end{array}}
\definecolor{defncolor}{HTML}{C44800}
\newcommand{\defn}[1]{\textcolor{defncolor}{\emph{#1}}}
\DeclareMathOperator{\nbw}{nbw}
\DeclareMathOperator{\vb}{vb}
\DeclareMathOperator{\Span}{span}
\DeclareMathOperator{\End}{End}
\DeclareMathOperator{\Hom}{Hom}
\DeclareMathOperator{\Break}{break}
\definecolor{c1}{HTML}{39A0FF}
\definecolor{c2}{HTML}{DC7D00}
\definecolor{c3}{HTML}{90D300}
\definecolor{c4}{HTML}{9728A1}
\newcommand{\colortop}[2]{\foreach \i in {#1}  {\filldraw[fill=#2,draw=#2,line width = 1pt] (T\i) circle (4pt);}}
\newcommand{\colormid}[2]{\foreach \i in {#1}  {\filldraw[fill=#2,draw=#2,line width = 1pt] (M\i) circle (4pt);}}
\newcommand{\colorbot}[2]{\foreach \i in {#1}  {\filldraw[fill=#2,draw=#2,line width = 1pt] (B\i) circle (4pt);}}
\newcommand{\colortopfixed}[2]{\foreach \i in {#1}  {\filldraw[fill=white,draw=#2,line width = 1pt] (T\i) circle (4pt);}}
\newcommand{\colorbotfixed}[2]{\foreach \i in {#1}  {\filldraw[fill=white,draw=#2,line width = 1pt] (B\i) circle (4pt);}}
\newcommand{\inlinediagram }[2]{\raisebox{-.2em}{$\begin{array}{c}
    \begin{tikzpicture}[xscale=.5,yscale=.5,line width=1.25pt] 
        \foreach \i in {1,...,#1}  { \path (\i,1.25) coordinate (T\i); \path (\i,.25) coordinate (B\i);  } 
        \filldraw[fill= black!12,draw=black!12,line width=4pt]  (T1) -- (T#1) -- (B#1) -- (B1) -- (T1);
        #2
    \end{tikzpicture}
\end{array}$}}
\newtheorem{prop}{Proposition}[section]
\newtheorem{theorem}[prop]{Theorem}
\newtheorem{lemma}[prop]{Lemma}
\newtheorem{defi}[prop]{Definition}
\newtheorem{rema}[prop]{Remark}
\newtheorem{exam}[prop]{Example}
\title{A Diagram-Like Basis for the Multiset Partition Algebra}
\author{Alexander Wilson}
\begin{document}

\maketitle

\textbf{Abstract} There is a classical connection between the representation theory of the symmetric group and the general linear group called Schur-Weyl duality. Variations on this principle yield analogous connections between the symmetric group and other objects such as the partition algebra and more recently the multiset partition algebra. The partition algebra has a well-known basis indexed by graph-theoretic diagrams which allows the multiplication in the algebra to be understood visually as combinations of these diagrams. We construct an analogous basis for the multiset partition algebra called the diagram-like basis and use this basis to construct its irreducible representations and give a generating set. We also provide a change-of-basis formula from the orbit basis of the multiset partition algebra to this diagram-like basis which exhibits similarities to the analogous change-of-basis formula for the partition algebra.

\section{Introduction}

Let $V_n$ be an $n$-dimensional vector space, and write ${V_n}^{\otimes r}$ for its $r$th tensor power. The general linear group $GL_n$ acts on the tensor power diagonally, where a matrix $M\in GL_n$ acts on each tensor factor: \[M.(v_1\otimes\dots\otimes v_r)=(Mv_1)\otimes\dots\otimes (Mv_r).\] The symmetric group $\SG_r$ acts on the tensor power by permuting tensor factors: \[\sigma\cdot(v_1\otimes\dots\otimes v_r)=v_{\sigma^{-1}(1)}\otimes\dots\otimes v_{\sigma^{-1}(r)}.\] These two actions are mutual centralizers. That is, taking the endomorphisms of ${V_n}^{\otimes r}$ that commute with one action recovers the other. This is an example of Schur-Weyl duality, and one consequence of such a duality is the decomposition \[{V_n}^{\otimes r}\cong\bigoplus_{\lambda}W_{GL_n}^\lambda\otimes W_{\SG_r}^\lambda\] where for a group or algebra $A$, we write $W_A^\lambda$ to represent an irreducible representation of $A$ and the sum is over partitions $\lambda$ of $r$. This establishes a correspondence between irreducible $GL_n$-modules and irreducible $\SG_r$-modules and allows information to be passed between the representation theory of the two groups.

Any situation in which two actions mutually centralize each other leads to an analogous decomposition. The study of these centralizer algebras is connected to long-standing questions in the representation theory of the symmetric group and $GL_n$ including the Kronecker problem \cite{bowman2015partition,orellana2020howe}, the restriction problem, and plethysm \cite{orellana2021uniform}.

For another example, let $V_{n,k}=(\C^n)^k$ be the space of $n\times k$ matrices and let $\PP^r(V_{n,k})$ be the space of polynomial forms on $V_{n,k}$. The group $GL_n$ naturally acts on $V_{n,k}$ by left-multiplication. This action extends to $\PP^r(V_{n,k})$ where $M\in GL_n$ acts on $f(X)\in\PP^r(V_{n,k})$ by \[(M.f)(X)=f(M^{-1}X).\] The group $GL_k$ can act analogously where $N\in GL_k$ acts by: \[(N.f)(X)=f(XN).\] In \cite{howe}, Roger Howe determined that these actions are mutual centralizers leading to a decomposition \[\PP^r(V_{n,k})\cong\bigoplus_{\lambda} W_{GL_n}^\lambda\otimes W_{GL_k}^\lambda\] where the sum is over partitions $\lambda$ of $r$ of length at most $\min\{n,k\}$ (see e.g. Section 5.2.6 of \cite{gw} for details).

In the 1990's, Martin \cite{martin} introduced the partition algebra $P_r(n)$ as a generalization of the Temperley-Lieb algebra and the Potts model in statistical mechanics. In the case that $n\geq 2r$, the partition algebra is the algebra whose action centralizers the action of $\SG_n\subseteq GL_n$ as the subgroup of permutation matrices acting on ${V_n}^{\otimes r}$ \cite{jones1993potts,martin1991potts,Martin1994AlgebrasIH}. From this perspective, there is a natural basis for $P_r(n)$ called the orbit basis, but its product is complicated. A second basis called the diagram basis has a much simpler product in terms of graph-theoretic diagrams. The partition algebra, its generators, and its representations have been well-studied, and some major milestones are outlined in the following timeline.

\newlength\yearposx
\hspace{-.7cm}\begin{tikzpicture}[scale=0.57]
    \foreach \x in {1994,1999, 2005,2018,2020,2021}{
        \pgfmathsetlength\yearposx{(\x-1990)*.7cm};
        \coordinate (y\x)   at (\yearposx,0);
        \coordinate (y\x t) at (\yearposx,+10pt);
        \coordinate (y\x b) at (\yearposx,-10pt);
    }
    \draw [->] (y1994) -- (y2021);
   \foreach \x in {1994,2005,2020}
        \draw[->] (y\x) -- (y\x b);
   \foreach \x in {1999,2018}
        \draw[->] (y\x) -- (y\x t);
    \node at (y1994) [below=3pt,align=left,xshift=1cm] {Partition algebra\\ introduced with\\ orbit and diagram basis \\\cite{jones1993potts,martin1991potts}};
    \node at (y1999) [above=3pt, align=left] {Partition algebra\\ shown to be cellular \\ \cite{xi1999partition} (small correction in \cite{green2018iterated})};
    \node at (y2005) [below=3pt,align=left, xshift=1.5cm] {Presentations by generators\\and relations given\footnotemark\\ \cite{HALVERSON2005869}};
    \node at (y2018) [above=3pt,align=left] {Dimensions of irreducibles\\described via tableaux\footnotemark\\ \cite{Benkart2019PartitionAA,Benkart2019PartitionAP,Orellana2015SymmetricGC}};
    \node at (y2020) [below=3pt,align=left] {Irreducible\\representations\\constructed\\ \cite{Halverson2019SetPartitionTS,Halverson2018SetpartitionTA}};
\end{tikzpicture}

\footnotetext[1]{Other notable presentations for $P_r(n)$ are given in \cite{james2011generators, enyang2013jucys}}
\footnotetext{Note that once the cellularity and semisimplicity of $P_r(n)$ were established, the dimensions of the irreducibles are encoded in the cellular data. However, these papers were the first to describe these dimensions explicitly in terms of tableaux}

A major motivation for studying the partition algebra is to understand its representations and use them to study objects in the representation theory of the symmetric group such as the Kronecker coefficients \cite{Benkart2019PartitionAA, Benkart2019PartitionAP, benkart2017dimensions,bowman2021the,enyang2013seminormal,halverson2001characters,halverson2004RSK,Halverson2019SetPartitionTS}. 

In \cite{orellana2020howe}, the authors Orellana and Zabrocki restrict the $GL_n$ action of Howe duality to the $n\times n$ permutation matrices to obtain the multiset partition algebra $\MP_{r,k}(n)$ and provide a basis analogous to the orbit basis for $P_r(n)$. In \cite{orellana2020combinatorial,orellana2020howe} Orellana and Zabrocki use symmetric function methods to compute the dimensions of irreducible $\MP_{r,k}(n)$ as counting the number of semistandard multiset partition tableaux. Our aim in this paper is to fill in three gaps in the timeline for $\MP_{r,k}(n)$ by:
\begin{enumerate}
    \item[(a)] providing a basis analogous to the diagram basis for $P_r(n)$,
    \item[(b)] providing generators for $\MP_{r,k}(n)$, and
    \item[(c)] constructing the irreducible representations of $\MP_{r,k}(n)$ as actions on the tableaux enumeratively predicted by Orellana and Zabrocki.
\end{enumerate}

In \cite{narayanan}, the authors investigate the centralizer $\End_{\SG_n}(\PP^{\bfa_1}(V_n)\otimes\dots\otimes \PP^{\bfa_k}(V_n))$ for a weak composition $\bfa$ of $r$ of length $k$, also dubbing it the multiset partition algebra, written $\MP_\bfa(n)$. Orellana and Zabrocki state that $\MP_\bfa(n)$ should be isomorphic to a subalgebra of their multiset partition algebra, and in this paper we describe that isomorphic subalgebra.

In Section \ref{sec:prelim}, we define the relevant combinatorial objects and introduce the partition algebra and multiset partition algebra. In Section \ref{sec:orbits}, we collect some useful facts about orbits of the combinatorial objects under the action of Young subgroups. In Section \ref{sec:construction}, we describe the centralizer of an algebra $A$ acting on a direct sum of projections of a semisimple module $V$ by idempotents. We then construct the irreducible modules over this centralizer in terms of irreducible modules of $\End_A(V)$. In the following two sections, we specialize these constructions to the setting of the multiset partition algebra. In Section \ref{sec:diagram_like_basis}, we use a decomposition of $\PP^r(V_{n,k})$ as a $GL_n$-module to obtain the diagram-like basis and describe $\End_G(\PP^r(V_{n,k}))$ for general subgroups $G$ of $GL_n$. In Section \ref{sec:reps}, we construct the irreducible representations of $\MP_{r,k}(n)$ with bases indexed by multiset-valued tableaux. In Section \ref{sec:generators}, we provide a generating set for $\MP_{r,k}(n)$, and finally in Section \ref{app:changeOfBasis} we provide a change-of-basis formula from the orbit basis of Orellana and Zabrocki to our diagram-like basis.

\section{Preliminaries and Definitions}\label{sec:prelim}

\subsection{Set and Multiset Partitions}

A \defn{set partition} $\rho$ of a set $S$ is a set of nonempty subsets of $S$ called \defn{blocks} whose disjoint union is $S$. We write $\ell(\rho)$ for the number of blocks in $\rho$. We define $[r]=\{1,\dots,r\}$ the \defn{unbarred} numbers and $\ovb{r}=\{\ov1,\dots,\ov r\}$ the \defn{barred} numbers. We write $\Pi_{2(r)}$ for the set of set partitions of $[r]\cup\ovb{r}$. For a set $R$ and a set partition $\rho$ of $S$, write $\rho|_R$ for the set partition obtained by restricting the elements in $\rho$ to only the elements appearing in $R$.

\begin{exam} A set partition and its restriction to a set:
    \begin{align*}
        \pi&=\{\{1,3\},\{2,4,\ov1,\ov2\},\{\ov3,\ov4\}\}\in\Pi_{2(4)}\\
        \pi|_{[4]}&=\{\{1,3\},\{2,4\}\}
    \end{align*}
\end{exam}

A \defn{weak composition} of an integer $r$ of length $k$ is a sequence of $k$ non-negative integers which sum to $r$. Write $W_{r,k}$ for the set of weak compositions of $r$ of length $k$. For $\bfa\in W_{r,k}$, write $\bfa_i$ for the $i$th number in the sequence.

A \defn{multiset} of size $r$ from a set $S$ is a finite collection of $r$ unordered elements of $S$ which can be repeated. We will write multisets in $\multi{,}$ to differentiate them from sets and we will usually denote them by a capital letter with a tilde. We may write multisets using exponential notation $\tS=\multi{{s_1}^{m_1},\dots,{s_k}^{m_k}}$ where the multiplicity of the element $s_i$ is given by the exponent $m_i$. We write $m_{s_i}(\tS)=m_i$ for this multiplicity. Given multisets $\tS=\multi{{s_1}^{m_1},\dots,{s_k}^{m_k}}$ and $\tR=\multi{{s_1}^{n_1},\dots,{s_k}^{n_k}}$, write $\tS\uplus\tR=\multi{{s_1}^{m_1+n_1},\dots,{s_k}^{m_k+n_k}}$ for the union of the two multisets.

A \defn{multiset partition} $\tilde\rho$ of a multiset $\tS$ is a multiset of multisets called blocks whose union is $\tS$. We write $\ell(\tilde\rho)$ for the number of blocks. Write $\tilde\Pi_{2(r),k}$ for the set of multiset partitions with $r$ elements from $[k]$ and $r$ elements from $\ovb{k}$. For $\trho$ a multiset partition and $R$ a set, write $\trho|_{R}$ for the multiset partition obtained by restricting the elements in $\trho$ to only the elements appearing in $R$. 

\begin{exam} A multiset partition and its restriction to a set:
    \begin{align*}
        \tpi&=\multi{\multi{1,\ov1,\ov1},\multi{1,2,\ov2}}\in\tilde\Pi_{2(3),2}\\
        \tpi|_{\ovb2}&=\multi{\multi{\ov1,\ov1},\multi{\ov2}}
    \end{align*}
\end{exam}

Finally, we define partial orders on multisets and multiset partitions, which restrict to sets and set partitions as a special case. Let $\tS$ and $\tR$ be multisets from $[k]$. We say that $\tS<\tR$ if one of the following conditions hold: (i) $\tS$ is empty and $\tR$ is not, (ii) $\max(\tS)<\max(\tR)$, or (iii) $\max(\tS)=\max(\tR)=m$ and $\tS\smallsetminus\{m\}<\tR\smallsetminus\{m\}$. We call this the \defn{last-letter order} on multisets.

\begin{exam} Comparisons of multisets:
    \begin{align*}
        \multi{1,3}&\leq\multi{1,1,3}\\
        \multi{1,1,1,2}&\leq\multi{3}
    \end{align*}
\end{exam}

If $\tnu=\{\tS_1,\dots,\tS_k\}$ and $\tpi=\{\tR_1,\dots,\tR_\ell\}$ are multiset partitions, we say that $\tnu\leq\tpi$, or $\tnu$ is \defn{coarser} than $\tpi$, if $\tnu$ can be obtained by combining blocks of $\tpi$. Precisely, there exists a set partition $\{C_1,\dots,C_k\}$ of $[\ell]$ so that $\tilde{S}_i=\biguplus_{j\in C_i}\tilde{R}_j$ for all $i$.

\begin{exam} An example of comparing two multiset partitions:
    \begin{align*}
        \multi{\multi{1,1,3,3},\multi{2,3}}&\leq\multi{\multi{1,3},\multi{1,3},\multi{2},\multi{3}}
    \end{align*}
\end{exam}

\subsection[Diagrams]{Set and Multiset Partition Diagrams}

For a set partition $\pi\in\Pi_{2(r)}$, there is a classical graph-theoretic representation of $\pi$ on two rows of vertices with the top row being labeled $1$ through $r$ and the bottom being labeled $\ov1$ through $\ov{r}$. Two vertices of this graph are in the same connected component if and only if their labels are in the same block of $\pi$. 

\begin{exam} The set partition $\pi=\{\{1,\ov1,\ov2,\ov3\},\{2,3\},\{\ov4\},\{4,5,\ov5\}\}$ could be represented by either of the following two graphs.
    \begin{align*}
        \begin{array}{c}
            \begin{tikzpicture}[xscale=.5,yscale=.5,line width=1.25pt] 
                \foreach \i in {1,2,3,4,5}  { \path (\i,1.25) coordinate (T\i); \path(\i,1.8) node {$\i$}; \path (\i,.25) coordinate (B\i); \path(\i,-.3) node {$\ov{\i}$}; } 
                \filldraw[fill= black!12,draw=black!12,line width=4pt]  (T1) -- (T5) -- (B5) -- (B1) -- (T1);
                \draw[black] (T1)--(B3)--(B2)--(B1)--(T1);
                \draw[black] (T2)--(T3);
                \draw[black] (T4)--(T5)--(B5)--(T4);
                \colortop{1,2,3,4,5}{black}
                \colorbot{1,2,3,4,5}{black}
            \end{tikzpicture}
        \end{array} && \begin{array}{c}
            \begin{tikzpicture}[xscale=.5,yscale=.5,line width=1.25pt] 
                \foreach \i in {1,2,3,4,5}  { \path (\i,1.25) coordinate (T\i); \path(\i,1.8) node {$\i$}; \path (\i,.25) coordinate (B\i); \path(\i,-.3) node {$\ov{\i}$}; } 
                \filldraw[fill= black!12,draw=black!12,line width=4pt]  (T1) -- (T5) -- (B5) -- (B1) -- (T1);
                \draw[black] (T1)--(B3)--(B2)--(B1)--(T1);
                \draw[black] (T1)--(B2);
                \draw[black] (T2)--(T3);
                \draw[black] (T5)--(B5)--(T4);
                \colortop{1,2,3,4,5}{black}
                \colorbot{1,2,3,4,5}{black}
            \end{tikzpicture}
        \end{array}
    \end{align*}
\end{exam}

Note that there could be many such graphs that represent the set partition $\pi$, so we consider two graphs equivalent if their connected components give rise to the same set partition. The \defn{diagram} of $\pi$ is the equivalence class of graphs with the same connected components.

We can similarly consider a graph-theoretic representation of any multiset partition $\tpi\in\tilde\Pi_{2(r),k}$. This time we place $r$ vertices on the top labeled by the unbarred elements of the blocks of $\tpi$ in weakly increasing order and place $r$ vertices on the bottom labeled by the barred elements in weakly increasing order. We then connect the vertices in any way so that the labeled connected components taken together are $\tpi$.

\begin{exam}\label{MSPDiagramExample} The multiset partition $\tpi=\multi{\multi{1,\ov1,\ov1,\ov2},\multi{1,1},\multi{\ov2},\multi{2,2,\ov2}}$ could be represented by any of the following graphs.
    \begin{align*}
        \begin{array}{c}
            \begin{tikzpicture}[xscale=.5,yscale=.5,line width=1.25pt] 
                \foreach \i in {1,2,3,4,5}  { \path (\i,1.25) coordinate (T\i); \path (\i,.25) coordinate (B\i); } 
                \path(1,1.8) node {$1$};
                \path(2,1.8) node {$1$};
                \path(3,1.8) node {$1$};
                \path(4,1.8) node {$2$};
                \path(5,1.8) node {$2$};
                \path(1,-.3) node {$\ov1$};
                \path(2,-.3) node {$\ov1$};
                \path(3,-.3) node {$\ov2$};
                \path(4,-.3) node {$\ov2$};
                \path(5,-.3) node {$\ov2$};
                \filldraw[fill= black!12,draw=black!12,line width=4pt]  (T1) -- (T5) -- (B5) -- (B1) -- (T1);
                \draw[black] (T1)--(B3)--(B2)--(B1)--(T1);
                \draw[black] (T2)--(T3);
                \draw[black] (T4)--(T5)--(B5)--(T4);
                \colortop{1,2,3}{c1}
                \colortop{4,5}{c2}
                \colorbot{1,2}{c1}
                \colorbot{3,4,5}{c2}
            \end{tikzpicture}
        \end{array} && \begin{array}{c}
            \begin{tikzpicture}[xscale=.5,yscale=.5,line width=1.25pt] 
                \foreach \i in {1,2,3,4,5}  { \path (\i,1.25) coordinate (T\i); \path (\i,.25) coordinate (B\i); } 
                \path(1,1.8) node {$1$};
                \path(2,1.8) node {$1$};
                \path(3,1.8) node {$1$};
                \path(4,1.8) node {$2$};
                \path(5,1.8) node {$2$};
                \path(1,-.3) node {$\ov1$};
                \path(2,-.3) node {$\ov1$};
                \path(3,-.3) node {$\ov2$};
                \path(4,-.3) node {$\ov2$};
                \path(5,-.3) node {$\ov2$};
                \filldraw[fill= black!12,draw=black!12,line width=4pt]  (T1) -- (T5) -- (B5) -- (B1) -- (T1);
                \draw[black] (B3)--(B2)--(B1)--(T1);
                \draw[black] (T2)--(T3);
                \draw[black] (T4)--(T5)--(B5);
                \colortop{1,2,3}{c1}
                \colortop{4,5}{c2}
                \colorbot{1,2}{c1}
                \colorbot{3,4,5}{c2}
            \end{tikzpicture}
        \end{array} && \begin{array}{c}
            \begin{tikzpicture}[xscale=.5,yscale=.5,line width=1.25pt] 
                \foreach \i in {1,2,3,4,5}  { \path (\i,1.25) coordinate (T\i); \path (\i,.25) coordinate (B\i); } 
                \path(1,1.8) node {$1$};
                \path(2,1.8) node {$1$};
                \path(3,1.8) node {$1$};
                \path(4,1.8) node {$2$};
                \path(5,1.8) node {$2$};
                \path(1,-.3) node {$\ov1$};
                \path(2,-.3) node {$\ov1$};
                \path(3,-.3) node {$\ov2$};
                \path(4,-.3) node {$\ov2$};
                \path(5,-.3) node {$\ov2$};
                \filldraw[fill= black!12,draw=black!12,line width=4pt]  (T1) -- (T5) -- (B5) -- (B1) -- (T1);
                \draw[black] (T3)--(B3)--(B2)--(B1);
                \draw[black] (T1)--(T2);
                \draw[black] (T4)--(T5)--(B4);
                \colortop{1,2,3}{c1}
                \colortop{4,5}{c2}
                \colorbot{1,2}{c1}
                \colorbot{3,4,5}{c2}
            \end{tikzpicture}
        \end{array}
    \end{align*}
\end{exam}

Again we may have many graphs representing the same multiset partition. The \defn{diagram} of $\tpi$ is the equivalence class of graphs whose labeled connected components give $\tpi$.

We will often drop the labels on these graphs. A set partition diagram will be distinguished by the black color of its vertices, and it will be understood that the vertices are labeled in increasing order left-to-right. A multiset partition diagram will be distinguished by its colored vertices. Its vertices are understood to be labeled with \textcolor{c1}{\textbf{blue}}, \textcolor{c2}{\textbf{orange}}, \textcolor{c3}{\textbf{green}}, and \textcolor{c4}{\textbf{purple}} representing 1, 2, 3, and 4 respectively.

Because of this graphical representation, for a set $S$ with elements from $[r]\cup\ovb{r}$ we will sometimes refer to elements at the ``top'' of $S$ to mean the unbarred elements and elements at the ``bottom'' of $S$ to mean the barred elements, and likewise with multisets.

\subsection{Tableaux}

A \defn{partition} of $n$ is a weakly-decreasing sequence $\lambda$ of positive integers called parts which sum to $n$. We write $\ell(\lambda)$ for the number of parts of $\lambda$. We will write $\lambda\vdash n$ to mean that $\lambda$ is a partition of $n$ and write $|\lambda|=n$. Write $\lambda_i$ for the $i$th element of the sequence $\lambda$, called the $i$th part of $\lambda$, and ${\color{defncolor}\lambda^*}$ for the partition $(\lambda_2,\dots,\lambda_\ell)$ obtained by removing the first part. Given a partition $\lambda$, its \defn{Young diagram} is an array of left-justified boxes where the $i$th row from the bottom has $\lambda_i$ boxes.

\begin{exam} The Young diagram of the partition $(3,3,1)\vdash 7$ is \begin{align*}
    \inline{\begin{ytableau}
        \; \\
        \; & \; & \; \\
        \; & \; & \;
    \end{ytableau}}
\end{align*}
\end{exam}

When we refer to the $i$th row of a Young diagram, we mean the $i$th row from the bottom, which corresponds to the $i$th part of $\lambda$.

A \defn{tableau} of shape $\lambda$ will be a filling of these boxes in $\lambda$'s Young diagram with mathematical objects---in this paper the objects will be positive integers, sets, or multisets. We will call these integer-valued, set-valued, and multiset-valued tableaux respectively. We take a moment to define some particular classes of tableaux.

Let $\rho$ be a set partition of $[r]$ and $\lambda\vdash n$ such that $|\lambda^*|\leq\ell(\rho)$. A \defn{set partition tableau} of shape $\lambda$ and content $\rho$ is a filling $T$ of the Young diagram of $\lambda$ with the blocks of $\rho$ along with $n-\ell(\rho)$ empty boxes in the first row. A \defn{standard set partition tableau} is a set partition tableau whose rows increase left-to-right and columns increase bottom-to-top with respect to the last-letter order. Write $\SPT_{\lambda,r}$ for the set of set partition tableaux of shape $\lambda$ with content a set partition of $[r]$ and write $\SSPT_{\lambda,r}$ for the subset of $\SPT_{\lambda,r}$ consisting of standard set partition tableaux.

\begin{exam}The tableau
    \begin{align*}
\inline{\begin{ytableau}
        17\\
        35 & 68\\
        \; & \; & \; & 24 & 9
\end{ytableau}}&\in\SSPT_{(5,2,1),9}\\
        \intertext{is standard, whereas the tableau}
        \inline{\begin{ytableau}
        35\\
        17 & 68\\
        \; & \; & \; & 24 & 9
\end{ytableau}}&\notin\SSPT_{(5,2,1),9}\\
        \intertext{has a decrease in its first column, making it nonstandard, and the tableau}
        \inline{\begin{ytableau}
        27\\
        35 & 68\\
        \; & \; & \; & 24 & 19
\end{ytableau}}&\notin\SPT_{(5,2,1),9}\\
    \end{align*} does not use each number in $[9]$ exactly once, making it not a set partition tableau.
\end{exam}

Write $\Lambda^{P_r(n)}$ for the set of $\lambda$ for which $\SSPT_{\lambda,r}\neq\emptyset$ (we choose this notation because $\Lambda^{P_r(n)}$ will also be an indexing set for the irreducible $P_r(n)$-modules).

Let $\tilde\rho$ be a multiset partition from $[k]$ and $\lambda\vdash n$ such that $|\lambda^*|\leq\ell(\tilde\rho)$. A \defn{multiset partition tableau} of shape $\lambda$ and content $\tilde\rho$ is a filling $\tilde{T}$ of the Young diagram with the blocks of $\tilde\rho$ along with $n-\ell(\tilde\rho)$ empty boxes in the first row. A \defn{semistandard multiset partition tableau} is a multiset partition tableau that strictly increases along columns and weakly increases along rows under the last-letter order. Write $\MPT_{\lambda,r,k}$ for the set of multiset partition tableaux of shape $\lambda$ with content a multiset partition from $[k]$ with a total of $r$ numbers. Write $\SSMPT_{\lambda,r,k}$ for the subset of $\MPT_{\lambda,r,k}$ consisting of semistandard multiset partition tableaux.

\begin{exam} The tableau
    \begin{align*}
        \inline{\begin{ytableau}
        112\\
        12\\
        11 & 11\\
        \; & \; & 22 & 3
\end{ytableau}}&\in \SSMPT_{(4,2,1,1),12,3}\\
        \intertext{is semistandard, whereas the tableau}
        \inline{\begin{ytableau}
        112\\
        11\\
        11 & 12 \\
        \; & \; & \; & 22 & 3
\end{ytableau}}&\notin \SSMPT_{(4,2,1,1),12,3}
    \end{align*} has a repeat in its first column, making it not semistandard.
\end{exam}

Write $\Lambda^{\MP_{r,k}(n)}$ for the set of partitions $\lambda\vdash n$ for which $\SSMPT_{\lambda,r,k}\neq\emptyset$. 

\begin{rema} We conclude this section on tableaux with two comments regarding the empty boxes. Note that, except for these empty boxes, the standard set partition tableaux and the semistandard multiset partition tableaux closely analogize standard Young tableaux (where each number in $[n]$ is used once) and semistandard Young tableaux (where repetition is allowed) respectively. Additionally, for reasons discussed in Section \ref{sec:partitionalgebra}, we will usually assume that $n\geq2r$. In this case, the number of empty boxes in a tableau will always be at least the number of boxes in its second row.
\end{rema}

\subsection{Double-Centralizer Theorem}

The algebras of interest in this paper arise as the algebras of endomorphisms of $\SG_n$-modules, commonly called \defn{centralizer algebras} of $\SG_n$. The following theorem summarizes a general case encompassing the Schur-Weyl duality and Howe duality discussed in the introduction as well as the duality between $\SG_n$ and its centralizer algebras.

\begin{theorem} \cite[Section 6.2.5]{procesi}\cite[Section 4.2.1]{gw}Let $A$ be a semisimple algebra acting faithfully on a module $V$ and set $B=\End_A(V)$. Then $B$ is semisimple and $\End_B(V)\cong A$. Furthermore, there is a set $P$ (a subset of the indexing set of the irreducible representations of $A$) such that for each $x\in P$, $W_A^x$ is an irreducible $A$-module occurring in the decomposition of $V$ as an $A$-module. If we set $W_B^x=\Hom(W_A^x, V)$, then $W_B^x$ is an irreducible $B$-module and the decomposition of $V$ as an $A\times B$-module is \[V\cong\bigoplus_{x\in P}W_A^x\otimes W_B^x.\] Moreover, the dimension of $W_A^x$ is equal to the multiplicity of $W_B^x$ in $V$ as a $B$-module and the dimension of $W_B^x$ is equal to the multiplicity of $W_A^x$ in $V$ as an $A$-module.
\end{theorem}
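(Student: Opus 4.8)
The plan is to reduce everything to the Artin--Wedderburn structure theory for semisimple algebras over $\C$ together with Schur's lemma. First I would invoke Artin--Wedderburn to write $A\cong\bigoplus_\lambda\End_\C(W_A^\lambda)$, a product of matrix algebras indexed by the isomorphism classes of irreducible $A$-modules. Since $A$ acts faithfully on $V$, no matrix block can annihilate $V$, so every $W_A^\lambda$ occurs in the decomposition of $V$ into irreducibles; let $P$ be this index set and write the isotypic decomposition
\[V\;\cong\;\bigoplus_{x\in P}W_A^x\otimes M_x,\]
where $M_x:=\Hom_A(W_A^x,V)$ is the multiplicity space, of dimension $m_x\geq1$, with $A$ acting only on the first tensor factor of each summand.

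Next I would compute $B=\End_A(V)$ via Schur's lemma: because the $W_A^x$ are pairwise non-isomorphic irreducibles and $\C$ is algebraically closed, $\Hom_A(W_A^x,W_A^y)=\delta_{xy}\,\C$, so an $A$-linear endomorphism of $V$ cannot mix distinct isotypic components and acts on $W_A^x\otimes M_x$ as $1\otimes(\text{arbitrary linear map on }M_x)$. Hence $B\cong\bigoplus_{x\in P}\End_\C(M_x)$, a product of matrix algebras, so $B$ is semisimple with irreducible modules exactly the $M_x$, $x\in P$. Reading off the $B$-action on the displayed decomposition, $B$ acts through the second tensor factor, so $W_B^x:=\Hom_A(W_A^x,V)=M_x$ is an irreducible $B$-module, the summand $W_A^x\otimes W_B^x$ is an irreducible $A\times B$-module (a tensor product of irreducibles over the algebraically closed field $\C$), and
\[V\;\cong\;\bigoplus_{x\in P}W_A^x\otimes W_B^x\]
as an $A\times B$-module. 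This same picture gives both multiplicity statements at once: as a $B$-module, $W_B^x$ occurs in $V$ with multiplicity $\dim W_A^x$, and as an $A$-module, $W_A^x$ occurs in $V$ with multiplicity $\dim W_B^x=\dim M_x=m_x$.

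Finally I would establish $\End_B(V)\cong A$ by the double-commutant argument. Applying the Schur's-lemma computation of the previous step with the roles of $A$ and $B$ interchanged---now using that the $W_B^x=M_x$ are pairwise non-isomorphic irreducible $B$-modules---gives $\End_B(V)\cong\bigoplus_{x\in P}\End_\C(W_A^x)$. The structure map $A\to\End_\C(V)$ lands inside $\End_B(V)$ since $A$ commutes with $B$ by construction, and it is injective by faithfulness; comparing dimensions, $\dim A=\sum_x(\dim W_A^x)^2=\dim\End_B(V)$, so the inclusion is an equality of subalgebras of $\End_\C(V)$, i.e. $\End_B(V)\cong A$.

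The step I expect to require the most care is this last one: the containment $A\subseteq\End_B(V)$ is immediate, but upgrading it to equality depends on knowing both sides are semisimple of the same finite dimension (equivalently, on the Jacobson density theorem). One must also keep track throughout that the ground field is algebraically closed, so that $\Hom_A(W,W)=\C$ for $W$ simple and tensor products of simple $A$- and $B$-modules remain simple over $A\times B$; over a general field these fail and the clean $A\times B$-decomposition does not hold.
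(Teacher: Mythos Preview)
The paper does not supply its own proof of this theorem; it is quoted with citations to Procesi and Goodman--Wallach and then used as a black box. Your argument is the standard one found in those references: Artin--Wedderburn plus Schur's lemma to obtain the isotypic decomposition $V\cong\bigoplus_x W_A^x\otimes M_x$, identification of $B$ with $\bigoplus_x\End_\C(M_x)$, and then a dimension count (or density) to close the double-commutant loop $\End_B(V)=A$. It is correct, and your observation that faithfulness forces every irreducible $A$-module to appear in $V$ (so that $P$ is in fact the full index set) is a nice sharpening of the theorem as stated.
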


The decomposition in the above theorem gives a correspondence between irreducible representations of the algebras $A$ and $B$ and allows information like dimensions and multiplicities to be passed between them. In this paper, we are interested in setting $A=\C\SG_n$. In this case, the indexing set $P$ for the irreducible representations is a subset of the integer partitions of $n$.

\subsection{Partition Algebra}\label{sec:partitionalgebra}

For $r$ a positive integer and an indeterminate $x$, the partition algebra $P_r(x)$ is an associative algebra over $\C(x)$ first introduced as a generalization of the Temperley-Lieb algebra and the Potts model in statistical mechanics by Jones \cite{jones1993potts} and Martin \cite{martin, Martin1994AlgebrasIH}. When $x$ is specialized to an integer $n\geq 2r$, the algebra $P_r(n)$ is isomorphic to the algebra of endomorphisms $\End_{\SG_n}({V_n}^{\otimes r})$. The partition algebra has two distinguished bases: the \defn{orbit basis} $\{\TT_\pi:\pi\in\Pi_{2(r)}\}$ which arises naturally from the structure of $P_r(n)$ as a centralizer algebra and the \defn{diagram basis} $\{\LL_\pi:\pi\in\Pi_{2(r)}\}$ whose product has a combinatorial interpretation in terms of partition diagrams. The change-of-basis formula is obtained by summing over coarsenings of a diagram: \[\LL_\pi=\sum_{\nu\leq\pi}\TT_\nu.\]

The product formula for $\LL_\pi$ can be stated in terms of diagrams as follows. To compute the product of $\LL_\pi$ and $\LL_\nu$, place a graph representing $\pi$ above one representing $\nu$ and identify the vertices on the bottom of $\pi$ with the corresponding vertices of $\nu$ to create a three-tiered diagram. Let $\gamma$ be the restriction of this diagram to the very top and very bottom, preserving which vertices are connected and let $c(\pi,\nu)$ be the number of components entirely in the middle of the three-tier diagram. Then $\LL_\pi\LL_\nu=n^{c(\pi,\nu)}\LL_\gamma$.

\begin{exam} Here we show the product of two diagram basis elements. Notice that two components are entirely in the middle, giving a coefficient of $n^2$.
    \[\begin{array}{c}
        \begin{tikzpicture}[xscale=.5,yscale=.5,line width=1.25pt] 
        \foreach \i in {1,2,3,4,5,6,7}  { \path (\i,1.25) coordinate (T\i); \path (\i,.25) coordinate (B\i); } 
            \filldraw[fill= black!12,draw=black!12,line width=4pt]  (T1) -- (T7) -- (B7) -- (B1) -- (T1);
            \draw[black] (T1)--(T2)--(B1)--(T1);
            \draw[black] (T3)--(B2);
            \draw[black] (T4)--(T5)--(B4)--(T4);
            \draw[black] (T7)--(B7);
            \draw[black] (B3) .. controls +(0,+.35) and +(0,+.35) .. (B5);
            \colortop{1,...,7}{black}
            \colorbot{1,...,7}{black}
        \end{tikzpicture} \\
       \begin{tikzpicture}[xscale=.5,yscale=.5,line width=1.25pt] 
        \foreach \i in {1,2,3,4,5,6,7}  { \path (\i,1.25) coordinate (T\i); \path (\i,.25) coordinate (B\i); } 
            \filldraw[fill= black!12,draw=black!12,line width=4pt]  (T1) -- (T7) -- (B7) -- (B1) -- (T1);
            \draw[black] (T1)--(B1)--(B2)--(T1);
            \draw[black] (B3)--(B5);
            \draw[black] (T7)--(B7)--(B6)--(T7);
            \draw[black] (T2) .. controls +(0,-.35) and +(0,-.35) .. (T4);
            \draw[black] (T3) .. controls +(0,-.35) and +(0,-.35) .. (T5);
            \colortop{1,...,7}{black}
            \colorbot{1,...,7}{black}
        \end{tikzpicture}
    \end{array}=n^2\begin{array}{c}
        \begin{tikzpicture}[xscale=.5,yscale=.5,line width=1.25pt] 
        \foreach \i in {1,2,3,4,5,6,7}  { \path (\i,1.25) coordinate (T\i); \path (\i,.25) coordinate (B\i); } 
            \filldraw[fill= black!12,draw=black!12,line width=4pt]  (T1) -- (T7) -- (B7) -- (B1) -- (T1);
            \draw[black] (T1)--(T2)--(B2)--(B1)--(T1);
            \draw[black] (T3)--(T5);
            \draw[black] (B3)--(B5);
            \draw[black] (T7)--(B7)--(B6)--(T7);
            \colortop{1,...,7}{black}
            \colorbot{1,...,7}{black}
        \end{tikzpicture}
    \end{array}\]
\end{exam}

In \cite{Halverson2018SetpartitionTA}, the authors construct the irreducible representations $P_r^\lambda$ of $P_r(n)$ for $n\geq 2r$ as a combinatorial action of the set partition diagrams on set partition tableaux. For $\lambda\in\Lambda^{P_r(n)}$, the module $P_r^\lambda$ is $\C\{v_T:T\in\SSPT_{\lambda,r}\}$ with action given as follows. For a set partition $\pi\in\Pi_{2(r)}$ to act on a tableau $T$, first pull out the content of $T$, a set partition of $[r]$, into a single row. Then, put $\pi$ on top of this row and identify the corresponding vertices. Form $T'$ by replacing the content of each box in $T$ with the set of vertices atop $\pi$ that the box is connected to, and for each block entirely in the top of $\pi$, include it as the content of a box in the first row of $T'$. If two blocks above the first row are combined or the content of a box does not connect to the top of the partition diagram, the result is zero.

\begin{exam}
    To illustrate the action, we show three examples of different diagrams acting on the same tableau. In the first example, the blocks $4$ and $5$ can be combined without resulting in zero because one of them is in the first row.
    \begin{align*}
        \begin{tabular}{c}
             \begin{tikzpicture}[line width=1.25pt, xscale=.7, yscale=.7]
                    \path(1.35,1) node {$\begin{ytableau}
        3\\
        12 & 4\\
        \; & \; & 5
\end{ytableau}$};
                    \path (.3,1.2) coordinate (T21);
                    \path (.8,2) coordinate (T31);
                    \path (2.35,.45) coordinate (T13);
                    \path (1.57,1.2) coordinate (T22);
                    \path (-.75,2.7) coordinate (C1);
                    \path (.25,2.7) coordinate (C2);
                    \path (1.25,2.7) coordinate (C3);
                    \path (2.25,2.7) coordinate (C4);
                    \path (3.25,2.7) coordinate (C5);
                    \path (-.75,3.2) coordinate (B1);
                    \path (.25,3.2) coordinate (B2);
                    \path (1.25,3.2) coordinate (B3);
                    \path (2.25,3.2) coordinate (B4);
                    \path (3.25,3.2) coordinate (B5);
                    \path (-.75,4.2) coordinate (T1);
                    \path (.25,4.2) coordinate (T2);
                    \path (1.25,4.2) coordinate (T3);
                    \path (2.25,4.2) coordinate (T4);
                    \path (3.25,4.2) coordinate (T5);
                    \draw[gray] (T21) .. controls +(-.6,0) and +(+.6,0) .. (C1);
                    \draw[gray] (T31) .. controls +(0,+.6) and +(0,-.6) .. (C3);
                    \draw[gray] (T22) .. controls +(0,+.6) and +(0,-.6) .. (C4);
                    \draw[gray] (T13) .. controls +(0,+.8) and +(0,-.8) .. (C5);
                    \draw[black] (C1) -- (C2);
                    \draw[black] (T2)--(B3);
                    \draw[black] (B2)--(T3);
                    \draw[black] (B4)--(B5)--(T5)--(B4);
                    \filldraw[fill=gray,draw=gray,line width = 1pt] (T21) circle (2pt);
                    \filldraw[fill=gray,draw=gray,line width = 1pt] (T31) circle (2pt);
                    \filldraw[fill=gray,draw=gray,line width = 1pt] (T13) circle (2pt);
                    \filldraw[fill=gray,draw=gray,line width = 1pt] (T22) circle (2pt);
                    \foreach \i in {C1, C2, C4, B1, B2, B4, T1, T2, T3, B5, C5, C3, B3, T4, T5} {\filldraw[fill=black,draw=black,line width = 1pt] (\i) circle (4pt);}
            \end{tikzpicture}
        \end{tabular}&=\inline{\begin{ytableau}
            2\\
            3 & 5\\
            \; & \; & 1 & 4
        \end{ytableau}}=-\,\inline{\begin{ytableau}
            3\\
            2 & 5\\
            \; & \; & 1 & 4
        \end{ytableau}}
    \end{align*}
    In the following example, the content of a box above the first row does not connect to the top of $\pi$, so the result is zero.
    \begin{align*}
        \begin{tabular}{c}
             \begin{tikzpicture}[line width=1.25pt, xscale=.7, yscale=.7]
                    \path(1.35,1) node {$\begin{ytableau}
        3\\
        12 & 4\\
        \; & \; & 5
\end{ytableau}$};
                    \path (.3,1.2) coordinate (T21);
                    \path (.8,2) coordinate (T31);
                    \path (2.35,.45) coordinate (T13);
                    \path (1.57,1.2) coordinate (T22);
                    \path (-.75,2.7) coordinate (C1);
                    \path (.25,2.7) coordinate (C2);
                    \path (1.25,2.7) coordinate (C3);
                    \path (2.25,2.7) coordinate (C4);
                    \path (3.25,2.7) coordinate (C5);
                    \path (-.75,3.2) coordinate (B1);
                    \path (.25,3.2) coordinate (B2);
                    \path (1.25,3.2) coordinate (B3);
                    \path (2.25,3.2) coordinate (B4);
                    \path (3.25,3.2) coordinate (B5);
                    \path (-.75,4.2) coordinate (T1);
                    \path (.25,4.2) coordinate (T2);
                    \path (1.25,4.2) coordinate (T3);
                    \path (2.25,4.2) coordinate (T4);
                    \path (3.25,4.2) coordinate (T5);
                    \draw[c2] (T21) .. controls +(-.6,0) and +(+.6,0) .. (C1);
                    \draw[gray] (T31) .. controls +(0,+.6) and +(0,-.6) .. (C3);
                    \draw[gray] (T22) .. controls +(0,+.6) and +(0,-.6) .. (C4);
                    \draw[gray] (T13) .. controls +(0,+.8) and +(0,-.8) .. (C5);
                    \draw[black] (C1) -- (C2);
                    \draw[black] (T1)--(T2)--(B3);
                    \draw[black] (T4)--(T5)--(B4)--(T4);
                    \filldraw[fill=c2,draw=c2,line width = 1pt] (T21) circle (2pt);
                    \filldraw[fill=gray,draw=gray,line width = 1pt] (T31) circle (2pt);
                    \filldraw[fill=gray,draw=gray,line width = 1pt] (T13) circle (2pt);
                    \filldraw[fill=gray,draw=gray,line width = 1pt] (T22) circle (2pt);
                    \foreach \i in {C1, C2, C4, B1, B2, B4, T1, T2, T3, B5, C5, C3, B3, T4, T5} {\filldraw[fill=black,draw=black,line width = 1pt] (\i) circle (4pt);}
            \end{tikzpicture}
        \end{tabular}&=0\\
        \intertext{In the following example, the conents of the boxes containing $12$ and $3$ become connected, so the result is zero.}
        \begin{tabular}{c}
             \begin{tikzpicture}[line width=1.25pt, xscale=.7, yscale=.7]
                    \path(1.35,1) node {$\begin{ytableau}
        3\\
        12 & 4\\
        \; & \; & 5
\end{ytableau}$};
                    \path (.3,1.2) coordinate (T21);
                    \path (.8,2) coordinate (T31);
                    \path (2.35,.45) coordinate (T13);
                    \path (1.57,1.2) coordinate (T22);
                    \path (-.75,2.7) coordinate (C1);
                    \path (.25,2.7) coordinate (C2);
                    \path (1.25,2.7) coordinate (C3);
                    \path (2.25,2.7) coordinate (C4);
                    \path (3.25,2.7) coordinate (C5);
                    \path (-.75,3.2) coordinate (B1);
                    \path (.25,3.2) coordinate (B2);
                    \path (1.25,3.2) coordinate (B3);
                    \path (2.25,3.2) coordinate (B4);
                    \path (3.25,3.2) coordinate (B5);
                    \path (-.75,4.2) coordinate (T1);
                    \path (.25,4.2) coordinate (T2);
                    \path (1.25,4.2) coordinate (T3);
                    \path (2.25,4.2) coordinate (T4);
                    \path (3.25,4.2) coordinate (T5);
                    \draw[c2] (T21) .. controls +(-.6,0) and +(+.6,0) .. (C1);
                    \draw[c2] (T31) .. controls +(0,+.6) and +(0,-.6) .. (C3);
                    \draw[gray] (T22) .. controls +(0,+.6) and +(0,-.6) .. (C4);
                    \draw[gray] (T13) .. controls +(0,+.8) and +(0,-.8) .. (C5);
                    \draw[black] (C1) -- (C2);
                    \draw[black] (T1)--(B1);
                    \draw[black] (B2)--(B3)--(T2)--(B2);
                    \draw[black] (T3)--(B4)--(B5);
                    \draw[black] (T4)--(T5);
                    \filldraw[fill=c2,draw=c2,line width = 1pt] (T21) circle (2pt);
                    \filldraw[fill=c2,draw=c2,line width = 1pt] (T31) circle (2pt);
                    \filldraw[fill=gray,draw=gray,line width = 1pt] (T13) circle (2pt);
                    \filldraw[fill=gray,draw=gray,line width = 1pt] (T22) circle (2pt);
                    \foreach \i in {C1, C2, C4, B1, B2, B4, T1, T2, T3, B5, C5, C3, B3, T4, T5} {\filldraw[fill=black,draw=black,line width = 1pt] (\i) circle (4pt);}
            \end{tikzpicture}
        \end{tabular}&=0
    \end{align*}
\end{exam}

The result $T$ of the above process may not be a standard set partition tableau, so we need to make sense of what it means to write $v_T$ for $T$ nonstandard. The algorithm for writing $v_T$ as a linear combination of standard tableaux is called the \defn{straightening algorithm}. The straightening algorithm for $P_r^\lambda$ is the same as for the Specht modules of $\SG_n$ applied to the rows above the first row of $T$, a complete treatment of which can be found in \cite{sagan}, but we summarize some key features that will be important for our constructions. Given $T\in\SPT_{\lambda,r}$ nonstandard, the straightening algorithm writes $v_T$ as a linear combination \[v_T=\sum_{S\in\SSPT_{\lambda,r}} c_S v_S\] for some coefficients $c_S.$

The relations between the $v_T$ are called Garnir relations and are generally complicated, but one special case will be particularly useful to us: if $T'$ is the result of exchanging two boxes of $T$ above the first row that sit in the same column, then $v_{T'}=-v_T$.

\subsection{Multiset Partition Algebra}

The multiset partition algebra naturally arises by restricting the action of $GL_n$ on $\PP^r(V_{n,k})$ in Howe duality to the permutation matrices. One can think of elements in $\PP^r(V_{n,k})$ as homogeneous polynomials of degree $r$ in indeterminates $x_{ij}$ for $1\leq i\leq n$ and $1\leq j\leq k$. The action of $GL_n$ on the space $\PP^r(V_{n,k})$ can be described on monomials as follows. Given a matrix $M=(m_{ij})\in GL_n$, its inverse acts on an element of $U_\bfa$ as follows:

\begin{align}
    M^{-1}.x_{ij}&=\sum_{\ell=1}^n m_{i\ell}x_{\ell j}\nonumber\\
    M^{-1}.x_{i_1j_1}\dots x_{i_rj_r}&=\left(\sum_{\ell_1=1}^n m_{i_1\ell_1}x_{\ell_1 j_1}\right)\cdots\left(\sum_{\ell_r=1}^n m_{i_r\ell_r}x_{\ell_r j_r}\right)\nonumber\\
    &=\sum_{\ell_1,\dots,\ell_r=1}^n m_{i_1\ell_1}\cdots m_{i_r\ell_r} (x_{\ell_1j_1}\dots x_{\ell_rj_r})\label{eq:Ua_computation}
\end{align}

In \cite{orellana2020howe}, the authors introduce a multiset partition algebra $\MP_{r,k}(x)$ with bases indexed by elements of $\tilde\Pi_{2(r),k}$. When $x$ is specialized to an integer $n\geq 2r$, the algebra $\MP_{r,k}(n)$ is isomorphic to $\End_{\SG_n}(\PP^r(V_{n,k}))$ where $\SG_n$ acts by the restriction of the $GL_n$ action above to the $n\times n$ permutation matrices. The authors obtain a basis analogous to the orbit basis for $P_r(n)$ and prove that for $n\geq 2r$ the irreducible representations of $\MP_{r,k}(n)$ occurring in the decomposition \begin{align}
    \PP^r(V_{n,k})\cong\bigoplus_{\lambda\in\Lambda^{\MP_{r,k}(n)}}W_{\SG_n}^\lambda\otimes W_{\MP_{r,k}(n)}^\lambda \label{eq:mprk_decomp}
\end{align} have dimension $\dim(W_{\MP_{r,k}(n)}^\lambda)=\#\SSMPT_{\lambda,r,k}$.

In \cite{MR4090833}, the authors generalize the Robinson–Schensted–Knuth algorithm to two-line arrays of multisets. This algorithm establishes a correspondence between multiset partitions in $\tilde\Pi_{2(r),k}$ and pairs of elements of $\SSMPT_{\lambda,r,k}$, 

\[\tilde\Pi_{2(r),k}\stackrel{\sim}{\longleftrightarrow}\biguplus_{\lambda\in\Lambda^{\MP_{r,k}(n)}}\SSMPT_{\lambda,r,k}\times \SSMPT_{\lambda,r,k}\]

showing that \begin{align*}
    \dim(\MP_{r,k}(n))=\sum_{\lambda\in\Lambda}\left(\dim(W_{\MP_{r,k}(n)}^\lambda)\right)^2.
\end{align*} Hence, the $\abs{\Lambda^{\MP_{r,k}(n)}}$ irreducible representations occurring in Equation \ref{eq:mprk_decomp} are pairwise nonisomorphic, and each irreducible representation of $\MP_{r,k}(n)$ is isomorphic to one representation in the set.

\section{Orbits Under Young Subgroups}\label{sec:orbits}

The symmetric group algebra $\C\SG_r$ sits naturally inside of $P_r(n)$ as the diagrams whose blocks pair one vertex on top with one on the bottom. For $\sigma\in\SG_r$, we will write $\LL_\sigma$ for the diagram basis element corresponding to the set partition $\{\{\sigma(1),\ov1\},\dots\{\sigma(r),\ov{r}\}\}$. This embedding leads to natural actions on set partitions and set partition tableaux. In this section, we collect up some useful facts about orbits of these actions when they are restricted to Young subgroups. For $\bfa\in W_{r,k}$, write $\SG_\bfa=\SG_{\{1,\dots,\bfa_1\}}\times\dots\times\SG_{\{\bfa_1+\dots+\bfa_{k-1},\dots,\bfa_1+\dots+\bfa_k\}}$ for the corresponding Young subgroup of $\SG_r$.

A pair of permutations $(\sigma_1,\sigma_2)\in \SG_r\times\SG_r$ can act on a set partition $\pi\in\Pi_{2(r)}$ by taking the product $\LL_{\sigma_1}\LL_{\pi}\LL_{\sigma_2}$. The resulting set partition $\sigma_1.\pi.\sigma_2$ can be obtained by replacing each $i$ in $\pi$ with $\sigma_1(i)$ and each $\ov{i}$ in $\pi$ with $\ov{{\sigma_2}^{-1}(i)}$.

Given $\bfa,\bfb\in W_{r,k}$, define the \defn{coloring map} $\kappa_{\bfa,\bfb}:\Pi_{2(r)}\to\tilde\Pi_{2(r)}$ to be the function given by making the following substitutions.

\begin{align*}
    i \mapsto \begin{cases}
        1 & i\leq \bfa_1\\
        2 & \bfa_1<i\leq \bfa_1+\bfa_2\\
        \vdots\\
        k & \bfa_1+\dots+\bfa_{k-1}<i
    \end{cases} && \ov i \mapsto \begin{cases}
        \ov1 & i\leq \bfb_1\\
        \ov2 & \bfb_1<i\leq \bfb_1+\bfb_2\\
        \vdots\\
        \ov k & \bfb_1+\dots+\bfb_{k-1}<i
    \end{cases}
\end{align*}

On diagrams, we can think of $\kappa_{\bfa,\bfb}$ as coloring in the diagram of $\pi$ with colors whose multiplicities are given by $\bfa$ on top and $\bfb$ on bottom.

\begin{exam}\label{ex:kappa} Two set partitions that map to the same multiset partition under the coloring map $\kappa_{(1,2,1),(2,0,2)}$:
    \begin{align*}
    \kappa_{(1,2,1),(2,0,2)}\left(\inlinediagram{4}{\draw[black] (T1)--(T2)--(B2)--(B1)--(T1);
                \draw[black] (T3)--(B3);
                \draw[black] (T4)--(B4);
                \colortop{1,2,3,4}{black}
                \colorbot{1,2,3,4}{black}}\right)&=\inlinediagram{4}{\draw[black] (T1)--(T2)--(B2)--(B1)--(T1);
                \draw[black] (T3)--(B3);
                \draw[black] (T4)--(B4);
                \colortop{1}{c1}
                \colortop{2,3}{c2}
                \colortop{4}{c3}
                \colorbot{1,2}{c1}
                \colorbot{3,4}{c3}}\\
    \kappa_{(1,2,1),(2,0,2)}\left(\inlinediagram{4}{\draw[black] (T1)--(T2)--(B2)--(B1)--(T1);
                \draw[black] (T3)--(B4);
                \draw[black] (T4)--(B3);
                \colortop{1,2,3,4}{black}
                \colorbot{1,2,3,4}{black}}\right)&=\inlinediagram{4}{\draw[black] (T1)--(T2)--(B2)--(B1)--(T1);
                \draw[black] (T3)--(B4);
                \draw[black] (T4)--(B3);
                \colortop{1}{c1}
                \colortop{2,3}{c2}
                \colortop{4}{c3}
                \colorbot{1,2}{c1}
                \colorbot{3,4}{c3}}\\
                \intertext{Then, because the two green vertices at the bottom of the diagram correspond to the same value, they can be exchanged without changing the underling multiset partition.}
                &=\inlinediagram{4}{\draw[black] (T1)--(T2)--(B2)--(B1)--(T1);
                \draw[black] (T3)--(B3);
                \draw[black] (T4)--(B4);
                \colortop{1}{c1}
                \colortop{2,3}{c2}
                \colortop{4}{c3}
                \colorbot{1,2}{c1}
                \colorbot{3,4}{c3}}\\
\end{align*}
\end{exam}

\begin{lemma}\label{lem:msp_correspondence}
    For $\bfa,\bfb\in W_{r,k}$, the map $\kappa_{\bfa,\bfb}$ induces a map \begin{align*}
        \ov\kappa_{\bfa,\bfb}:\Pi_{2(r)}/(\SG_\bfa\times\SG_\bfb)\rightarrow\tilde\Pi_{2(r),\bfa,\bfb}
    \end{align*} from set partitions modulo the action of $\SG_\bfa\times\SG_\bfb$ to multiset partitions whose elements have multiplicities given by $\bfa$ and $\bfb$. These maps taken together give a correspondence \begin{align}
    \tilde\Pi_{2(r),k}\stackrel{\sim}{\longleftrightarrow}\biguplus_{\bfa,\bfb\in W_{r,k}} \Pi_{2(r)}/(\SG_\bfa\times\SG_\bfb)\label{eq:msp_correspondence}
\end{align}
\end{lemma}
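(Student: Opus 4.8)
The plan is to show that $\kappa_{\bfa,\bfb}$ is constant on each $\SG_\bfa\times\SG_\bfb$-orbit (so that $\ov\kappa_{\bfa,\bfb}$ is well-defined), that the induced map is injective, and that it is surjective onto $\tilde\Pi_{2(r),\bfa,\bfb}$ (the multiset partitions whose underlying multiset has $\bfa_i$ copies of $i$ on top and $\bfb_i$ copies of $i$ on bottom). The correspondence \eqref{eq:msp_correspondence} then follows by summing over all $\bfa,\bfb\in W_{r,k}$, since every multiset partition in $\tilde\Pi_{2(r),k}$ has a unique ``multiplicity composition'' $(\bfa,\bfb)$, giving a partition of $\tilde\Pi_{2(r),k}$ into the pieces $\tilde\Pi_{2(r),\bfa,\bfb}$.

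First I would unwind the definitions. Recall from Section~\ref{sec:orbits} that $(\sigma_1,\sigma_2)\in\SG_\bfa\times\SG_\bfb$ acts on $\pi\in\Pi_{2(r)}$ by sending $i\mapsto\sigma_1(i)$ on top and $\ov i\mapsto\ov{\sigma_2^{-1}(i)}$ on bottom. Since $\sigma_1$ permutes only within each block $\{\bfa_1+\dots+\bfa_{j-1}+1,\dots,\bfa_1+\dots+\bfa_j\}$, the values $i$ and $\sigma_1(i)$ lie in the same block of the coloring substitution, hence receive the same color; similarly for $\sigma_2$ on the bottom. Therefore $\kappa_{\bfa,\bfb}(\sigma_1.\pi.\sigma_2)=\kappa_{\bfa,\bfb}(\pi)$, establishing that $\ov\kappa_{\bfa,\bfb}$ is well-defined. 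For surjectivity onto $\tilde\Pi_{2(r),\bfa,\bfb}$: given a multiset partition $\tpi$ with the prescribed multiplicities, I would ``decorate'' it by replacing, block by block, its $\bfa_1$ top-$1$'s by $1,\dots,\bfa_1$, its $\bfa_2$ top-$2$'s by $\bfa_1+1,\dots,\bfa_1+\bfa_2$, and so on (in any fixed order), and likewise for barred entries using $\bfb$; the result is an honest set partition $\pi\in\Pi_{2(r)}$ with $\kappa_{\bfa,\bfb}(\pi)=\tpi$.

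The heart of the argument — and the step I expect to be the main obstacle — is injectivity of $\ov\kappa_{\bfa,\bfb}$: if $\kappa_{\bfa,\bfb}(\pi)=\kappa_{\bfa,\bfb}(\pi')$, then $\pi'=\sigma_1.\pi.\sigma_2$ for some $(\sigma_1,\sigma_2)\in\SG_\bfa\times\SG_\bfb$. The subtlety, visible in Example~\ref{ex:kappa}, is that the coloring map collapses information not just about which same-colored vertices sit in which block but about how same-colored vertices in a \emph{common} block are matched up: two set partitions differing by swapping two equal-colored vertices that already lie in the same block have the same image, but the swap need not be realized by a single permutation acting cleanly. The clean way to handle this is to think of $\pi$ as assigning to each colored vertex a block-label (recording which block of $\tpi$ it belongs to, with a chosen enumeration of the blocks), i.e. as a function from the $r+r$ colored positions to the set of block-labels; two set partitions have the same image under $\kappa_{\bfa,\bfb}$ exactly when their block-label functions agree up to relabeling blocks and up to permuting, within each color class on each side, the positions assigned to it — and the latter permutations are precisely the elements of $\SG_\bfa$ (top) and $\SG_\bfb$ (bottom). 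Making this precise requires choosing a canonical representative: order the blocks of $\tpi$ (say by the last-letter order), then order the colored positions so that a canonical $\pi_0$ with $\kappa_{\bfa,\bfb}(\pi_0)=\tpi$ is determined; for an arbitrary $\pi$ with the same image, reading off which uncolored label $i$ went to which block and comparing with $\pi_0$ produces the required $\sigma_1\in\SG_\bfa$ and $\sigma_2\in\SG_\bfb$. I would write this out as: both $\pi$ and $\pi'$ are $\SG_\bfa\times\SG_\bfb$-equivalent to the canonical $\pi_0$, hence to each other.

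Finally, assembling the pieces: the map $\tilde\Pi_{2(r),k}\to\biguplus_{\bfa,\bfb}\Pi_{2(r)}/(\SG_\bfa\times\SG_\bfb)$ sends $\tpi$ to the class $\ov\kappa_{\bfa,\bfb}^{-1}(\tpi)$ in the block indexed by its multiplicity data, and its inverse is $\biguplus\ov\kappa_{\bfa,\bfb}$; both are well-defined and mutually inverse by the well-definedness, injectivity, and surjectivity just established, proving \eqref{eq:msp_correspondence}.
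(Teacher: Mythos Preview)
Your proposal is correct and follows essentially the same outline as the paper's proof: show $\kappa_{\bfa,\bfb}$ is constant on $\SG_\bfa\times\SG_\bfb$-orbits, then check injectivity and surjectivity of the induced map. The paper is much terser---it asserts injectivity in a single sentence and obtains surjectivity by ``taking any graph representing $\tpi$ and forgetting the data of the colored vertices''---whereas you supply more detail; your perceived ``subtlety'' in injectivity is a bit overblown, since any swap of same-colored vertices on one side is by definition an element of the Young subgroup, so the canonical-representative machinery, while correct, is heavier than required.
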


\begin{proof}

For $(\sigma_1,\sigma_2)\in \SG_\bfa\times\SG_\bfb$, it is clear that $\kappa_{\bfa,\bfb}(\sigma_1.\pi.\sigma_2)=\kappa_{\bfa,\bfb}(\pi)$ for all $\pi\in\Pi_{2(r)}$. Hence, $\kappa_{\bfa,\bfb}$ induces a map $\ov\kappa_{\bfa,\bfb}:\Pi_{2(r)}/(\SG_\bfa\times\SG_\bfb)\rightarrow\tilde\Pi_{2(r),k}$. Conversely, if $\kappa_{\bfa,\bfb}(\pi)=\kappa_{\bfa,\bfb}(\pi')$, then $\pi$ and $\pi'$ are in the same orbit. Hence, the map $\ov\kappa_{\bfa,\bfb}$ is injective. Given $\tpi\in\tilde\Pi_{2(r),k}$ whose unbarred and barred multiplicities are given by $\bfa$ and $\bfb$ respectively, we can easily create a set partition $\pi\in\Pi_{2(r)}$ such that $\kappa_{\bfa,\bfb}(\pi)=\tpi$ by simply taking any graph representing $\tpi$ and forgetting the data of the colored vertices. Hence, the maps $\ov\kappa_{\bfa,\bfb}$ taken together as a map $\biguplus_{\bfa,\bfb\in W_{r,k}} \Pi_{2(r)}/(\SG_\bfa\times\SG_\bfb)\to\tilde\Pi_{2(r),k}$ give a bijection. This map gives us a correspondence between multiset partitions and orbits of set partitions under an action of a pair of Young subgroups.

\end{proof}

We now obtain a second action from the module structure of $P_r^\lambda$. A permutation $\sigma\in\SG_r$ acts on the set $\SPT_{\lambda,r}$ by replacing each entry $i$ of a tableau $T$ with $\sigma(i)$.

\begin{exam}
    \[(1\,3\,2)(4).\inline{\begin{ytableau}
        23 \\
        1 & 4 \\
        \; & \;
    \end{ytableau}}=\inline{\begin{ytableau}
        12 \\
        3 & 4 \\
        \; & \;
    \end{ytableau}}\]
\end{exam}

Like before, we define a surjective coloring map $\kappa_\bfa:\SPT_{\lambda,r}\to\MPT_{\lambda,r,k}$ for $\bfa\in W_{r,k}$ that replaces the numbers $\{1,\dots,\bfa_1\}$ with 1, $\{\bfa_1+1,\dots,\bfa_1+\bfa_2\}$ with 2, etc.. 

\begin{exam} Two set partition tableaux that are sent to the same multiset partition tableau by the coloring map $\kappa_{(3,1)}$:
\begin{align*}
    \kappa_{(3,1)}\left(\inline{\begin{ytableau}
        23 \\
        1 & 4 \\
        \; & \;
    \end{ytableau}}\right)=\kappa_{(3,1)}\left(\inline{\begin{ytableau}
        12 \\
        3 & 4 \\
        \; & \;
    \end{ytableau}}\right)=\inline{\begin{ytableau}
        11 \\
        1 & 2 \\
        \; & \;
    \end{ytableau}}
\end{align*}
\end{exam}

\begin{lemma}
    There is a correspondence \[\MPT_{\lambda,r,k}\stackrel{\sim}{\longleftrightarrow}\biguplus_{\bfa\in W_{r,k}}\SPT_{\lambda,r}/\SG_\bfa\] where the multiset partition tableau $\tilde{T}$ maps to the set $\kappa_\bfa^{-1}(\tilde{T})$.
\end{lemma}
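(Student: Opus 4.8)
This lemma is the tableau analogue of Lemma~\ref{lem:msp_correspondence}, and the plan is to run the same three moves: fix $\bfa\in W_{r,k}$, show $\kappa_\bfa$ is constant on $\SG_\bfa$-orbits, show the induced map is injective, and then glue the maps over all $\bfa$ into a bijection. Write $I_c=\{\bfa_1+\dots+\bfa_{c-1}+1,\dots,\bfa_1+\dots+\bfa_c\}$ for the $c$th colour interval, so that $\kappa_\bfa$ recolours every entry lying in $I_c$ to the value $c$ and $[r]=I_1\sqcup\dots\sqcup I_k$ with $|I_c|=\bfa_c$. Since a permutation $\sigma\in\SG_\bfa$ permutes entries only within each $I_c$ and all of $I_c$ is recoloured to the single value $c$, we get $\kappa_\bfa(\sigma.T)=\kappa_\bfa(T)$ for every $T\in\SPT_{\lambda,r}$; hence $\kappa_\bfa$ descends to a map $\ov\kappa_\bfa:\SPT_{\lambda,r}/\SG_\bfa\to\MPT_{\lambda,r,k}$.

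Next I would prove $\ov\kappa_\bfa$ is injective. Suppose $\kappa_\bfa(T)=\kappa_\bfa(T')$. Recolouring leaves the shape and the positions of the empty boxes unchanged and acts box by box, so for every box $b$ and every colour $c$ the block in box $b$ of $T$ has exactly as many entries in $I_c$ as the block in box $b$ of $T'$. Because $T$ and $T'$ are set partition tableaux, their entries run over all of $[r]$ with no repeats, so each element of $I_c$ occurs in exactly one box of $T$ and in exactly one box of $T'$. Hence for each $c$ one may choose a bijection $\sigma_c:I_c\to I_c$ that sends, box by box, the entries of $T$ in $I_c$ to the entries of $T'$ in the same box (the two sets having equal size). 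Assembling the $\sigma_c$ gives $\sigma\in\SG_{I_1}\times\dots\times\SG_{I_k}=\SG_\bfa$ with $\sigma.T=T'$, so $T$ and $T'$ determine the same orbit.

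For surjectivity and gluing: given $\tilde T\in\MPT_{\lambda,r,k}$ with content a multiset partition $\tilde\rho$ from $[k]$ with a total of $r$ numbers, let $\bfa_c$ be the total multiplicity of the colour $c$ over all boxes of $\tilde T$, so that $\bfa=(\bfa_1,\dots,\bfa_k)\in W_{r,k}$. Replacing, in each box, the copies of colour $c$ by distinct elements of $I_c$, using each element of $I_c$ exactly once over the whole tableau, produces a set partition tableau $T$ of shape $\lambda$: its content is a set partition with $\ell(\tilde\rho)$ nonempty blocks, so $|\lambda^*|\le\ell(\tilde\rho)$ still holds and the $n-\ell(\tilde\rho)$ empty boxes of $\tilde T$ carry over unchanged; and $\kappa_\bfa(T)=\tilde T$ by construction. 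Finally, for any $T\in\SPT_{\lambda,r}$ the colour $c$ appears $|I_c|=\bfa_c$ times in $\kappa_\bfa(T)$, so the multiset partition tableau $\tilde T$ determines the composition $\bfa$ uniquely; therefore the images of the $\ov\kappa_\bfa$ for $\bfa\in W_{r,k}$ are pairwise disjoint, and the $\ov\kappa_\bfa$ assemble into a bijection $\biguplus_{\bfa\in W_{r,k}}\SPT_{\lambda,r}/\SG_\bfa\stackrel{\sim}{\longrightarrow}\MPT_{\lambda,r,k}$ whose inverse sends $\tilde T$ to $\kappa_\bfa^{-1}(\tilde T)$.

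The colour-invariance and the construction of a preimage are routine; the only step needing care is injectivity, and there the essential point---exactly as in the proof of Lemma~\ref{lem:msp_correspondence}---is that a set partition tableau uses each number of $[r]$ precisely once, so each colour interval $I_c$ is consumed exactly once and the box-by-box colour counts of $T$ and $T'$ genuinely agree, which is what forces $\sigma$ into $\SG_\bfa$ rather than merely $\SG_r$. I anticipate no obstacle beyond keeping this bookkeeping straight.
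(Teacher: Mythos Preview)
Your proposal is correct and is precisely the argument the paper has in mind: the paper's proof consists of the single sentence ``The proof is completely analogous to that of Lemma~\ref{lem:msp_correspondence},'' and you have faithfully carried out that analogy step by step.
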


\begin{proof}
    The proof is completely analogous to that of Lemma \ref{lem:msp_correspondence}
\end{proof}

\begin{rema}\label{rmk:semistandard_tableaux} The orbit of a standard set partition tableau $T$ corresponds to a multiset partition tableau $\tT$ whose rows and columns weakly increase. That is, $\tT$ is semistandard except for possible repeats within columns.
\end{rema}

\section{The Painted Algebra Construction}\label{sec:construction}

This section considers an algebra $B$ with a subset $\{e_1,\dots,e_m\}$ of its idempotents (which we call \defn{distinguished idempotents}) and $M$ a $B$-module. We provide constructions of a new algebra $\tilde{B}$ called the corresponding \defn{painted algebra} and a $\tilde{B}$-module $\tilde{M}$ called the \defn{painted module}. First, we consider the setting in which this construction will naturally arise in Section \ref{sec:diagram_like_basis}.

\begin{lemma}\label{lem:decompose_by_idempotents} Let $A$ be an algebra and $V$ a semisimple $A$-module. Let $e_1,\dots,e_m\in\End_A(V)$ be idempotents. Then \begin{align*}
    \End_A\left(\bigoplus_{i=1}^me_i V\right)\cong \bigoplus_{i,j=1}^m e_i\End_A(V) e_j
\end{align*} where the product on the right hand side of $e_i\varphi e_j\in e_i\End_A(V)e_j$ and $e_k\psi e_\ell\in e_k\End_A(V)e_\ell$ is given by \begin{align*}
    (e_i\varphi e_j)\cdot(e_k\psi e_\ell)=\delta_{jk}e_i(\varphi e_j\psi)e_\ell\in e_i\End_A(V) e_\ell.
\end{align*} where the product to the right of the equal sign is taken in $\End_A(V)$ and $\delta_{jk}$ is the Kronecker delta function.
\end{lemma}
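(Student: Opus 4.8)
The plan is to identify $\End_A\bigl(\bigoplus_{i=1}^m e_iV\bigr)$ with a matrix-style algebra whose $(i,j)$-entry is the space of $A$-module maps $e_jV \to e_iV$, and then to see that each such space is naturally $e_i\End_A(V)e_j$ via pre- and post-composition with the idempotents. Concretely, first I would recall the standard decomposition of endomorphism rings of direct sums: for any $A$-modules $U_1,\dots,U_m$,
\[
\End_A\Bigl(\bigoplus_{i=1}^m U_i\Bigr)\;\cong\;\bigoplus_{i,j=1}^m \Hom_A(U_j,U_i),
\]
where a global endomorphism $\Phi$ corresponds to the collection of its ``matrix entries'' $\Phi_{ij}=\iota_i^{*}\circ\Phi\circ\iota_j$ (with $\iota_j\colon U_j\hookrightarrow\bigoplus U_i$ the inclusion and $\iota_i^{*}$ the projection), and composition of endomorphisms corresponds to the usual row-by-column matrix multiplication $(\Phi\Psi)_{i\ell}=\sum_j \Phi_{ij}\Psi_{j\ell}$. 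Applying this with $U_i=e_iV$ reduces the lemma to identifying $\Hom_A(e_jV,e_iV)$ with $e_i\End_A(V)e_j$ and checking that matrix multiplication matches the stated product.

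Next I would construct the identification $\Hom_A(e_jV, e_iV)\cong e_i\End_A(V)e_j$. In one direction, given $\varphi\in\End_A(V)$, the element $e_i\varphi e_j$ (composition in $\End_A(V)$) restricts to a map $e_jV\to e_iV$: indeed $e_j$ acts as the identity on $e_jV$ since $e_j$ is idempotent, so $e_i\varphi e_j$ and $e_i\varphi$ agree on $e_jV$, and the image lands in $e_iV$ because $e_i$ is idempotent. In the other direction, given $f\colon e_jV\to e_iV$, extend it to $\tilde f\colon V\to V$ by $\tilde f = \iota_i\circ f\circ (e_j|^{e_jV})$ — that is, first apply $e_j$ to land in $e_jV$, then $f$, then include $e_iV\hookrightarrow V$; this $\tilde f$ lies in $\End_A(V)$ and satisfies $e_i\tilde f e_j=\tilde f$ and $\tilde f|_{e_jV}=f$. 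Checking these two assignments are mutually inverse is a short computation using only $e_i^2=e_i$, $e_j^2=e_j$. Under this identification the composite of $g\colon e_kV\to e_iV$ coming from $\varphi$ and $f\colon e_\ell V\to e_kV$ — which is only composable (i.e. gives a nonzero contribution to the matrix product) when the middle indices agree — becomes, after extending back to $\End_A(V)$, the product $\delta_{jk}\,e_i\varphi e_j\psi e_\ell$, matching the formula in the statement; the Kronecker delta $\delta_{jk}$ is exactly the bookkeeping that $\Hom_A(e_\ell V, e_kV)\circ \Hom_A(e_jV, e_iV)$ is zero unless $j=k$.

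The only genuinely delicate point — and the one I would spell out carefully — is verifying that the identification is compatible with the algebra structures on both sides, i.e. that the matrix-multiplication product transported from $\End_A(\bigoplus e_iV)$ agrees with the explicitly-defined product $(e_i\varphi e_j)\cdot(e_k\psi e_\ell)=\delta_{jk}e_i(\varphi e_j\psi)e_\ell$. This is where one must be careful about where composition is taken: inside $\bigoplus e_iV$ one composes the restricted maps, but the right-hand product is taken inside $\End_A(V)$, so one must check that extending-then-composing-then-restricting equals composing-the-restrictions; the key identity making this work is that on $e_jV$ the operator $e_j$ acts as the identity, so inserting or deleting an $e_j$ between two factors is harmless precisely when the domain is the right piece. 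Semisimplicity of $V$ is not actually needed for the isomorphism itself (it holds for any $A$-module $V$ and any idempotents $e_i\in\End_A(V)$), but it is the hypothesis under which the construction is invoked later, so I would not belabor it. Everything else is routine bookkeeping with idempotents, which I would summarize rather than grind through.
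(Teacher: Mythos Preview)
Your proof is correct and follows the same overall architecture as the paper: decompose $\End_A(\bigoplus_i e_iV)$ as a matrix of $\Hom_A(e_jV,e_iV)$'s, identify each piece with $e_i\End_A(V)e_j$, and check the product. The one substantive difference is in how you extend a map $f\colon e_jV\to e_iV$ to an element of $\End_A(V)$: the paper invokes semisimplicity of $V$ to choose a complement $U$ of $e_jV$ and extends by zero on $U$, whereas you simply precompose with $e_j$, setting $\tilde f(v)=f(e_jv)$. Your extension is more elementary and, as you note, shows the lemma holds without any semisimplicity hypothesis on $V$; the paper's argument is slightly less sharp here, though semisimplicity is present for later use anyway.
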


\begin{proof}
First, note that \begin{align*}
    \End_A\left(\bigoplus_{i=1}^me_i V\right)&\cong\bigoplus_{i,j=1}^m \Hom_A(e_j V, e_i V).
\end{align*}

An element $e_i\varphi e_j\in e_i\End_A(V) e_j$ can be viewed as a map $e_j V\to e_i V$, giving rise to an injective linear map $\Phi:e_i\End_A(V)e_j\to \Hom_A(e_jV, e_i V)$. Because $V$ is semisimple, the submodule $e_jV$ has a complementary submodule $U$ so that $V=e_j V\oplus U$. A map $\psi:e_j V\to e_iV\subseteq V$ can be extended to a map $\ov\psi:V\to V$ by setting $\ov\psi(u)=0$ for all $u\in U$. Then for any $e_jv\in e_jV$, we have that $e_i\ov\psi e_j(e_jv)=e_i\ov\psi(e_jv)=e_i\psi(e_jv)$. Hence, the map $\Phi$ is also surjective, so it is an isomorphism.

    \begin{align*}
        \End_A\left(\bigoplus_{i=1}^me_i V\right)&\cong\bigoplus_{i,j=1}^m e_i\End_A(V)e_j
    \end{align*}

    Suppose $j\neq k$. The output of the map $e_k\psi e_\ell$ is in $e_k V$, so the component of its output in $e_j V$ is zero. Hence, $(e_i\varphi e_j)\cdot(e_k\psi e_\ell)=0$. If instead $j=k$, we have that $(e_i\varphi e_j)\cdot(e_k\psi e_\ell)=e_i\varphi e_j e_k\psi e_\ell$. Hence, \begin{align*}
        (e_i\varphi e_j)\cdot(e_k\psi e_\ell)&=\delta_{jk}e_i\varphi (e_j)^2\psi e_\ell\\
        &=\delta_{jk}e_i(\varphi e_j\psi)e_\ell\in e_i\End_A(V) e_\ell
    \end{align*}
\end{proof}

Lemma \ref{lem:decompose_by_idempotents} motivates the following definition.

\begin{defi}
For a semisimple algebra $B$ along with distinguished idempotents $\{e_1,\dots,e_m\}$, the corresponding painted algebra with respect to these idempotents is \[\tB=\bigoplus_{i,j=1}^m e_i B e_j\] with multiplication as in Lemma \ref{lem:decompose_by_idempotents}. For a $B$-module $M$, the corresponding painted module with respect to these idempotents is \[\tilde{M}=\bigoplus_{i=1}^m e_i M\]  where $(e_i b e_j).e_k m=\delta_{jk}(e_ibe_j).m$, the action on the right side of the equality begin that of $B$ on $M$.
\end{defi}

\begin{exam} We now illustrate the painted algebra construction with two extreme examples.
    
    \begin{enumerate}[label={\alph*}]
        \item[(a)] If $e_1,\dots,e_m\in B$ are already orthogonal idempotents which sum to the identity, then $\tilde{B}\cong B$, and the painted algebra simply corresponds to the usual decomposition of the algebra by a system of orthogonal idempotents.
        \item[(b)] If each of $e_1,\dots,e_m\in B$ are the identity, then \[\tilde{B}\cong\underbrace{B\oplus\dots\oplus B}_{m}.\]
    \end{enumerate}

\end{exam}

To conclude this section, we show that the irreducible $\tB$-modules are precisely the painted irreducible $B$-modules.

\begin{lemma}\label{lem:symmetrized_modules}
    Let $B$ be a semisimple algebra along with distinguished idempotents $\{e_1,\dots,e_m\}$. Then \begin{enumerate}
        \item For any simple $B$-module $S$, either $\tS=\{0\}$ or $\tS$ is a simple $\tB$-module.
        \item For any simple $\tB$-module $P$, there is a simple $B$-module $S$ so that $\tS\cong P$.
    \end{enumerate}
\end{lemma}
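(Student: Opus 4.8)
The plan is to use the standard idempotent/category-equivalence philosophy adapted to this ``painted'' setting. The key structural observation is that $\tB = \bigoplus_{i,j} e_i B e_j$ is Morita-like related to $B$ via the single idempotent $f = e_1 + e_2 + \dots + e_m$ sitting in a matrix algebra over $B$; more concretely, I would realize $\tB$ as $f \, M_m(B) \, f$, where $M_m(B)$ is the algebra of $m\times m$ matrices over $B$ and $f$ is the diagonal idempotent $\operatorname{diag}(e_1,\dots,e_m)$. Unwinding the multiplication in Lemma~\ref{lem:decompose_by_idempotents} shows that the map sending $e_i b e_j$ to the matrix with $e_i b e_j$ in the $(i,j)$ entry and zeros elsewhere is an algebra isomorphism onto $f M_m(B) f$. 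Since $B$ is semisimple, so is $M_m(B)$, and hence so is the corner algebra $\tB = f M_m(B) f$; its simple modules are exactly $f N$ as $N$ ranges over the simple $M_m(B)$-modules with $fN \neq 0$. Under the equivalence between $M_m(B)$-modules and $B$-modules, a simple $M_m(B)$-module corresponds to a simple $B$-module $S$ and $fN$ corresponds precisely to $\bigoplus_{i=1}^m e_i S = \tS$. This simultaneously yields both claims: part~(1) is the statement that $f N$ is either zero or simple over $f M_m(B) f$, and part~(2) is the statement that every simple $\tB$-module arises this way.

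First I would carry out the identification $\tB \cong f M_m(B) f$ carefully, checking that the block-multiplication rule $(e_i\varphi e_j)(e_k\psi e_\ell) = \delta_{jk} e_i(\varphi e_j \psi) e_\ell$ is exactly matrix multiplication of the corresponding single-entry matrices; this is a short computation. Next I would invoke the general fact (valid for any ring $R$ and idempotent $f$) that $V \mapsto fV$ is a functor from $R$-modules to $fRf$-modules which sends simples to simples-or-zero, and which is essentially surjective onto simple $fRf$-modules: if $P$ is a simple $fRf$-module then $Rf \otimes_{fRf} P$ has a unique simple quotient $S$ with $fS \cong P$. (When $R$ is semisimple one can even say $fS \cong P$ with $S$ the simple summand of $Rf \otimes_{fRf} P$ directly.) Applying this with $R = M_m(B)$ gives part~(2) once I translate back through the Morita equivalence $M_m(B)\text{-mod} \simeq B\text{-mod}$, under which a simple $B$-module $S$ goes to $S^{\oplus m}$ (as a column vector) and the functor $N \mapsto fN$ becomes $S^{\oplus m} \mapsto \bigoplus_i e_i S = \tS$. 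Part~(1) then says: for $S$ a simple $B$-module, $\tS = f(S^{\oplus m})$ is zero or simple over $f M_m(B) f = \tB$, which is the corner-algebra fact again.

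Alternatively, if one prefers to avoid citing the corner-algebra machinery, I would argue directly. For part~(1): if $\tS \neq 0$, pick $0 \neq e_i s \in e_i S$; since $S$ is simple, $B s = S$, so $Be_i s$ (which maps onto $e_i S$ after applying the $e_i$'s, but more usefully) generates... — the cleanest direct route is to show $\tB \cdot (e_i s) \supseteq e_j S$ for each $j$: indeed $e_i S = e_i B s$, and for any $b$ with $e_i b s \neq 0$ we need $e_j b' e_i$'s hitting it; using simplicity of $S$, $e_i b s$ ranges over all of $e_i S$ as $b$ ranges over $B$, hence $e_j B e_i \cdot (e_i s)$ together with varying the starting vector recovers every $e_j S$, giving that $\tS$ is generated by any nonzero element, i.e.\ simple. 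For part~(2): given simple $P$ over $\tB$, note $\tB$ is semisimple (being a summand-corner of the semisimple $M_m(B)$, or by the decomposition into matrix algebras one gets from $B \cong \prod_\lambda \End(W_B^\lambda)$), so $P$ is a direct summand of $\tB$; tracing which $e_i B e_j$ blocks act nontrivially and comparing with the $\tS$ for the simples $S$ of $B$ shows $P \cong \tS$ for the corresponding $\lambda$, using a dimension/count match.

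The main obstacle I anticipate is purely bookkeeping: making the identification $\tB \cong f M_m(B) f$ and the translation of the functor $N \mapsto fN$ across the Morita equivalence $M_m(B)\text{-mod}\simeq B\text{-mod}$ genuinely precise, so that ``$fN$ corresponds to $\tS$'' is a verified equality of $\tB$-modules rather than a slogan. Everything downstream (semisimplicity of $\tB$, simples being corners of simples, essential surjectivity) is then standard corner-algebra theory and requires no new idea; the one place to be slightly careful is confirming that when $\tS = 0$ we correctly exclude that $\lambda$ from the indexing set — i.e.\ that the simple $\tB$-modules are indexed by $\{\lambda : \tS_\lambda \neq 0\}$ and no collisions occur, which follows from injectivity of $P \mapsto$ (the block of $\tB$ on which it is supported).
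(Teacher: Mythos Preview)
Your proposal is correct, and your primary approach is genuinely different from the paper's. Your key move---embedding $\tB$ as the corner $f\,M_m(B)\,f$ with $f=\operatorname{diag}(e_1,\dots,e_m)$ and then invoking Morita equivalence $M_m(B)\text{-mod}\simeq B\text{-mod}$ together with the standard corner-algebra fact that $N\mapsto fN$ sends simples to simples-or-zero and hits every simple $fRf$-module---packages both parts into a single citation. The paper instead argues each part directly: part~(1) is your alternative sketch (pick a nonzero $e_j s_j$ and use simplicity of $S$ to show any nonzero element of $\tS$ generates it), and part~(2) is carried out by forming $U=\bigl(\bigoplus_i Be_i\bigr)\otimes_{\tB}P$, taking a simple quotient $S=U/M$, and exhibiting by hand a nonzero $\tB$-map $P\to\tS$. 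That is precisely your ``$Rf\otimes_{fRf}P$ has a simple quotient $S$ with $fS\cong P$'' statement, but reproved from scratch in this setting rather than cited. Your route buys conceptual clarity and an immediate proof that $\tB$ is semisimple; the paper's route is self-contained and avoids any appeal to Morita theory or general corner-algebra machinery.
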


\begin{proof} \phantom{a}
    \begin{enumerate}
        \item Suppose $\tS\neq\{0\}$, let $\tilde{s}=\sum_{i=1}^m e_is_i\in\tS$ be nonzero, and fix any $j\in[m]$ such that $e_js_j$ is nonzero. We show that any such $\tilde{s}$ generates $\tS$ as a $\tB$-module, and so $\tS$ is simple. Note that $e_j\tilde{s}=e_js_j\in e_j S\subseteq S$. Because $S$ is simple, for any $e_ks\in S$ there exists $b\in B$ such that $be_j\tilde{s}=s$ and so $e_kb e_j\tilde{s}=e_k s$. Because the elements $e_k s$ span $\tS$, we see that $\tS$ is generated by any nonzero element and hence is simple.

        \item The remainder of the proof generalizes an argument for the case of a single idempotent found in the proof of Theorem 1.10.14 of \cite{linckelmann_2018}. Suppose $P$ is a simple $\tB$-module and define a $B$-module \[U=\left(\bigoplus_{i=1}^m Ae_i\right)\otimes_{\tB}P\] where $B$ acts on the direct sum by left-multiplication. Write $S=U/M$ where $M$ is a maximal submodule of $U$. Then $S$ is a simple $B$-module. The goal is now to define a nonzero $\tB$-module map from $P$ to the painted module $\tS$.

    Consider the quotient map $q:U\to S$ and suppose $e_i\otimes p\neq0$. We claim that $e_i\otimes p$ generates $U$ as a $B$-module and hence $q(e_i\otimes p)\neq0$ (else $e_i\otimes t\in M$, which would mean that $M=U$). To show this, we need only demonstrate that for any fixed $\ell\in[m]$ and $p'\in P$, there exists an element $b\in B$ such that $b.(e_i\otimes p)=e_\ell\otimes p'$. Because $P$ is a simple module and $e_i.p\neq0$, there exists an element  $\tilde{b}=\sum_{j,k}e_jb_{j,k}e_k\in \tB$ such that \begin{align*}
        \tilde{b}.e_i\otimes p&=\sum_{j,k}e_jb_{j,k}e_k.e_i\otimes p\\
        &=(e_1+\dots+e_m)\otimes (\sum_{j}e_jb_{j,i}e_i.p)\\
        &=(e_1+\dots+e_m)\otimes e_\ell p'\\
        &=e_\ell\otimes p'.
    \end{align*} Set $b=\sum_{j}e_jb_{j,i}e_i$. Then $b.e_i\otimes p=e_\ell\otimes p'$.

    Let $p\in P$ be nonzero and note that in $\tB$ we have $e_1+\dots+e_m=1$, so $(e_1+\dots+e_m).p=p$. Hence, some $e_i.p\neq0$. Consider the map \[e_i q e_i:e_iU\to e_i S.\] Because $e_i.(e_i\otimes p)=e_i\otimes p \in e_i U$, we know that $e_i q e_i$ is nonzero.  Define a $\tB$-module map $\bigoplus_{i=1}^m e_iU\to\bigoplus_{i=1}^m e_iS$ by $e_jr\mapsto e_j q(r)$. By the above observation, this is a nonzero module map. By Schur's Lemma, it is an isomorphism and hence \begin{align*}
        \tS&\cong \bigoplus_{i=1}^m e_iU\\
        &\cong(\bigoplus_{i,j=1}^m e_iBe_j)\otimes P\\
        &\cong P.
    \end{align*}
    \end{enumerate}
\end{proof}

\section{Painted Diagram Algebras and a Diagram-Like Basis}\label{sec:diagram_like_basis}

In this section, our goal is to decompose $\PP^r(V_{n,k})$ in a way that allows us to use the results of Section \ref{sec:construction}.

Let $U_\bfa$ be the span of monomials of the form $x_{i_1j_1}\cdots x_{i_rj_r}$ where for each $1\leq m\leq k$, exactly $\bfa_m$ of the values $j_1,\dots,j_r$ are equal to $m$. For example, the monomials $x_{11}x_{21}x_{23}$ and $x_{21}x_{21}x_{23}$ are both in $U_{(2,0,1)}\subset \PP^3(V_{2,3})$.  To apply Lemma \ref{lem:decompose_by_idempotents} and Lemma \ref{lem:symmetrized_modules}, we will need to write the subspace $U_\bfa$ as the projection by some idempotent in $P_r(n)$. We define the following idempotent for $\bfa\in W_{r,k}$: \[s_\bfa=\frac{1}{\abs{\SG_\bfa}}\sum_{\sigma\in\SG_\bfa}\LL_\sigma.\]

From Equation \ref{eq:Ua_computation} we see that the subspace $U_\bfa$ is in fact a $GL_n$-submodule of $\PP^r(V_{n,k})$. This gives a decomposition \[\PP^r(V_{n,k})=\bigoplus_{\bfa\in W_{r,k}}U_\bfa\] as a $GL_n$-module. We now use this decomposition to construct a linear isomorphism $\Phi:\bigoplus_{\bfa\in W_{r,k}}s_\bfa {V_n}^{\otimes r}\to \PP^r(V_{n,k})$.

For $\bfa\in W_{r,k}$, define a linear map $\Phi_\bfa:{V_n}^{\otimes r}\to U_\bfa$ by \[\Phi_\bfa(e_{i_1}\otimes\dots\otimes e_{i_r})=\prod_{m=1}^{\bfa_1}x_{i_m1}\prod_{m=\bfa_1+1}^{\bfa_1+\bfa_2}x_{i_m2}\dots \prod_{m=\bfa_1+\dots+\bfa_{k-1}+1}^{r}x_{i_mk}.\]

\begin{exam} Note that two different tensors may map to equal monomials as in the following example.
    \begin{align*}
        \Phi_{(1,2,2)}(e_2\otimes e_2\otimes e_1\otimes e_1\otimes e_2)&=x_{21}x_{22}x_{12}x_{13}x_{23}\\
        \Phi_{(1,2,2)}(e_2\otimes e_1\otimes e_2\otimes e_1\otimes e_2)&=x_{21}x_{12}x_{22}x_{13}x_{23}
    \end{align*}

    Here, the tensors map to the same monomial because the second is obtained from the first by swapping two factors mapping to indeterminates of the same form $x_{i2}$.
\end{exam}

For ease of notation, we will write $e_\bfi=e_{\bfi_1}\otimes\dots\otimes e_{\bfi_r}$ for $\bfi\in[n]^r$. It is clear that $\Phi_\bfa$ is surjective and that $\Phi_\bfa(e_\bfi)=\Phi_\bfa(e_{\bfi'})$ exactly when $e_{\bfi'}$ can be obtained from $e_\bfi$ by rearranging factors grouped into the same product above. That is, $e_{\bfi'}=\sigma(e_\bfi)$ for some $\sigma\in\SG_\bfa$. Hence, $\Phi_\bfa$ restricts to an isomorphism $s_\bfa{V_n}^{\otimes r}\stackrel{\sim}{\longrightarrow} U_\bfa$, so the map $\Phi:\bigoplus_{\bfa\in W_{r,k}}s_\bfa {V_n}^{\otimes r}\to \PP^r(V_{n,k})$ sending $s_\bfa(e_\bfi)$ to $\Phi_\bfa(s_\bfa(e_\bfi))$ is an isomorphism.

\begin{lemma}\label{lem:induced_isomorphism}
    The linear isomorphism $\Phi: \bigoplus_{\bfa\in W_{r,k}} s_\bfa{V_n}^{\otimes r}\to\PP^r(V_{n,k})$ above induces an isomorphism of algebras \[\End_G(\PP^r(V_{n,k}))\stackrel{\sim}{\longrightarrow}\End_G\left(\bigoplus_{\bfa\in W_{r,k}}s_\bfa{V_n}^{\otimes r}\right)\] for each subgroup $G$ of $GL_n$.
\end{lemma}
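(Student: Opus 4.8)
The plan is to leverage the fact that conjugation by any linear isomorphism induces an algebra isomorphism on endomorphism rings, and then check that this conjugation respects the $G$-equivariance condition. Concretely, given $\Phi\colon \bigoplus_{\bfa} s_\bfa {V_n}^{\otimes r}\to \PP^r(V_{n,k})$, the map $\psi\mapsto \Phi^{-1}\circ\psi\circ\Phi$ is always a ring isomorphism $\End_\C(\PP^r(V_{n,k}))\to\End_\C(\bigoplus_\bfa s_\bfa{V_n}^{\otimes r})$ (it is linear, bijective with inverse $\varphi\mapsto \Phi\circ\varphi\circ\Phi^{-1}$, and multiplicative because the middle $\Phi\circ\Phi^{-1}$ cancels). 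So the only real content is that this conjugation carries $G$-equivariant maps to $G$-equivariant maps in both directions, which amounts to showing that $\Phi$ itself is a map of $G$-modules, i.e.\ $G$-equivariant.

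Thus the key step I would carry out is: \textbf{$\Phi$ is an isomorphism of $GL_n$-modules}, hence a fortiori of $G$-modules for every subgroup $G\leq GL_n$. For this it suffices to show each restricted piece $\Phi_\bfa\colon s_\bfa{V_n}^{\otimes r}\to U_\bfa$ is $GL_n$-equivariant, since $\Phi$ is the direct sum of these and the decompositions $\PP^r(V_{n,k})=\bigoplus_\bfa U_\bfa$ and $\bigoplus_\bfa s_\bfa{V_n}^{\otimes r}$ are $GL_n$-stable (the former was noted in the text via Equation \eqref{eq:Ua_computation}, and the latter because $s_\bfa$ lies in $\End_{\SG_n}({V_n}^{\otimes r})$ — but one needs equivariance for all of $GL_n$ here, which holds because $s_\bfa$ is built from permutations of tensor factors, and permuting tensor factors commutes with the diagonal $GL_n$-action). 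To verify equivariance of $\Phi_\bfa$, compare the action of a matrix $M=(m_{ij})\in GL_n$ on a basis tensor $e_\bfi$, namely $M.(e_{\bfi_1}\otimes\cdots\otimes e_{\bfi_r})=\sum_{\ell_1,\dots,\ell_r} m_{\ell_1\bfi_1}\cdots m_{\ell_r\bfi_r}(e_{\ell_1}\otimes\cdots\otimes e_{\ell_r})$ with the action on the corresponding monomial given by Equation \eqref{eq:Ua_computation}; applying $\Phi_\bfa$ to the former and matching factor-by-factor against the latter shows $\Phi_\bfa(M.e_\bfi)=M.\Phi_\bfa(e_\bfi)$. (One should be mildly careful about whether the text's convention uses $M$ or $M^{-1}$ and $m_{i\ell}$ or $m_{\ell i}$, but this is just bookkeeping; the structural point is that $\Phi_\bfa$ sends the $m$th tensor slot to an $x_{\bullet,c}$ with a fixed column index $c$ depending only on $m$ and $\bfa$, and the $GL_n$-action only ever mixes the row index, so the two actions are intertwined.)

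With $\Phi$ established as a $G$-module isomorphism for every $G\leq GL_n$, I would finish by spelling out that conjugation by $\Phi$ restricts to a bijection $\End_G(\PP^r(V_{n,k}))\to\End_G(\bigoplus_\bfa s_\bfa{V_n}^{\otimes r})$: if $\psi$ commutes with the $G$-action on $\PP^r(V_{n,k})$ then $\Phi^{-1}\psi\Phi$ commutes with the $G$-action on $\bigoplus_\bfa s_\bfa{V_n}^{\otimes r}$ precisely because $\Phi$ intertwines the two actions, and symmetrically for the inverse map; multiplicativity and linearity are inherited from the ambient $\End_\C$ isomorphism. This gives the claimed algebra isomorphism.

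I expect the main obstacle to be purely a matter of care rather than depth: namely, checking that the domain $\bigoplus_\bfa s_\bfa{V_n}^{\otimes r}$ really is a $GL_n$-submodule (not merely an $\SG_n$-submodule) and that $\Phi_\bfa$ is $GL_n$-equivariant with the correct index conventions from Equation \eqref{eq:Ua_computation}. Everything else — that conjugation by a fixed linear isomorphism is an algebra isomorphism, and that it restricts to equivariant endomorphisms — is formal once equivariance of $\Phi$ is in hand.
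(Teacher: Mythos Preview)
Your overall architecture---conjugation by $\Phi$ gives an algebra isomorphism on full endomorphism rings, then check it restricts to $\End_G$---is exactly the paper's. The gap is in the step you flag as ``just bookkeeping.'' Under the paper's conventions it is \emph{not} true that $\Phi$ is $GL_n$-equivariant: the $GL_n$-action on $\PP^r(V_{n,k})$ is the contragredient one, $(M.f)(X)=f(M^{-1}X)$, so comparing Equation~\eqref{eq:Ua_computation} with the diagonal action on tensors gives $\Phi M = M^{-1}\Phi$, not $\Phi M = M\Phi$. Your sentence ``$\Phi_\bfa(M.e_\bfi)=M.\Phi_\bfa(e_\bfi)$'' is therefore false as stated, and the clause ``precisely because $\Phi$ intertwines the two actions'' does not apply as written.

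The paper salvages the conclusion by using this twisted relation directly: from $\Phi M=M^{-1}\Phi$ one gets $M\Phi^{-1}=\Phi^{-1}M^{-1}$, and then checks
\[
\varphi M = M\varphi \ \text{for all } M\in G \iff (\Phi^{-1}\varphi\Phi)M = M(\Phi^{-1}\varphi\Phi)\ \text{for all } M\in G,
\]
which works because $G$ is a group (commuting with every $M$ is the same as commuting with every $M^{-1}$). So the fix is short, but it is a genuine extra step beyond ``$\Phi$ is a $G$-module map,'' and your proposal as written would not go through without it.
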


\begin{proof}
The action of $M\in GL_n$ on $s_\bfa(e_{i_1}\otimes\dots\otimes e_{i_r})$ is given by \begin{align*}
    M.s_\bfa(e_{i_1}\otimes\dots\otimes e_{i_r})&=s_\bfa(Me_{i_1}\otimes\dots\otimes M e_{i_r})\\
    &=\sum_{\ell_1,\dots,\ell_r=1}^n m_{i_1\ell_r}\cdots m_{i_r\ell_r}s_\bfa(e_{\ell_1}\otimes\dots\otimes e_{\ell_r}).
\end{align*}

Comparing this computation with Equation \ref{eq:Ua_computation}, we see that the map $\Phi$ is \textit{nearly} a homomorphism of $GL_n$-modules, but the action of $M\in GL_n$ on one space is the action of $M^{-1}$ on the other. That is, for $M\in GL_n$, \begin{align*}
        \Phi M &= M^{-1}\Phi.\\
        \intertext{Multiplying by $\Phi^{-1}$ on the left and right of both sides yields an analogous statement for $\Phi^{-1}$:}
        M\Phi^{-1}&=\Phi^{-1}M^{-1}.
    \end{align*}

    The linear isomorphism $\Phi$ induces an algebra isomorphism \begin{align*}
        \End_\C(\PP^r(V_{n,k}))&\stackrel{\sim}{\longrightarrow}\End_\C\left(\bigoplus_{\bfa\in W_{r,k}}s_\bfa{V_n}^{\otimes r}\right)\\
        \varphi&\longmapsto \Phi^{-1}\varphi\Phi.
    \end{align*}
    
     Now we make the following observation for $G\subseteq GL_n$ a subgroup. \begin{align*}
        \varphi\in\End_G(\PP^r(V_{n,k}))&\iff \varphi=M^{-1}\varphi M && \forall M\in G\\
        &\iff \Phi^{-1}\varphi\Phi=\Phi^{-1} M^{-1} \varphi M \Phi && \forall M\in G\\
        &\iff \Phi^{-1}\varphi\Phi=M\Phi^{-1}\varphi\Phi M^{-1} && \forall M\in G\\
        &\iff \Phi^{-1}\varphi\Phi\in\End_G\left(\bigoplus_{\bfa\in W_{r,k}}s_\bfa{V_n}^{\otimes r}\right)
    \end{align*}

    Hence, the map $\varphi\mapsto \Phi^{-1}\varphi\Phi$ restricts to an isomorphism\[\End_G(\PP^r{V_{n,k}}^{\otimes r})\stackrel{\sim}{\longrightarrow}\End_G\left(\bigoplus_{\bfa\in W_{r,k}}s_\bfa {V_n}^{\otimes r}\right).\]
\end{proof}

Let $G$ be a subgroup of $GL_n$. Because $G\subseteq GL_n$, we have that\[\C\SG_n\cong\End_{GL_n}({V_n}^{\otimes r})\subseteq \End_G({V_n}^{\otimes r}).\] Hence, the idempotents $s_\bfa$ for $\bfa\in W_{r,k}$ are in $\End_G({V_n}^{\otimes r})$, allowing us to use Lemma \ref{lem:decompose_by_idempotents} to make the following computation. \begin{align*}
    \End_G\left(\PP^r(V_{n,k})\right)&\cong\End_G\left(\bigoplus_{\bfa\in W_{r,k}}s_\bfa {V_n}^{\otimes r}\right)\\
    &\cong\bigoplus_{\bfa,\bfb\in W_{r,k}}s_\bfa\End_G({V_n}^{\otimes r})s_\bfb
\end{align*} where the product is given by \begin{align}
    (s_\bfa\varphi s_\bfb)\cdot(s_\bfc\psi s_\bfd)=\delta_{\bfb,\bfc}s_\bfa\varphi s_\bfb \psi s_\bfd.
\end{align}

If we write $A_r(n)=\End_G({V_n}^{\otimes r})$, this algebra is precisely the painted algebra $\tilde{A}_{r,k}(n)$ with respect to the idempotents $\{s_\bfa:\bfa\in W_{r,k}\}$. We summarize the above analysis in the following theorem.

\begin{theorem}\label{thm:isomorphic_to_painted} Let $G\subseteq GL_n$ be a subgroup and let $A_r(n)=\End_G\left({V_n}^{\otimes r}\right)$. Then\[\End_G\left(\PP^r(V_{n,k})\right)\cong \tilde{A}_{r,k}(n)\] where $\tilde{A}_{r,k}(n)$ is the painted algebra of $A_r(n)$ with respect to the idempotents $\{s_\bfa:\bfa\in W_{r,k}\}$.
\end{theorem}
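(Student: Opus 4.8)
The plan is to assemble Theorem~\ref{thm:isomorphic_to_painted} entirely from pieces already in place, so the proof should be short. First I would invoke Lemma~\ref{lem:induced_isomorphism} with the given subgroup $G\subseteq GL_n$ to replace $\End_G(\PP^r(V_{n,k}))$ with $\End_G\bigl(\bigoplus_{\bfa\in W_{r,k}}s_\bfa{V_n}^{\otimes r}\bigr)$; this is where the transpose/inverse subtlety was handled, so nothing more needs to be said there. Next I would observe that $\C\SG_n\cong\End_{GL_n}({V_n}^{\otimes r})$ sits inside $\End_G({V_n}^{\otimes r})=A_r(n)$ (since $G\subseteq GL_n$ forces commuting with $G$ to be a weaker condition than commuting with $GL_n$), and hence each $s_\bfa=\frac1{|\SG_\bfa|}\sum_{\sigma\in\SG_\bfa}\LL_\sigma$ is an idempotent lying in $A_r(n)$.

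With the idempotents $\{s_\bfa:\bfa\in W_{r,k}\}$ identified inside $A_r(n)$, I would apply Lemma~\ref{lem:decompose_by_idempotents} to the semisimple $\C\SG_n$-module $V={V_n}^{\otimes r}$ with $A=\C\SG_n$, noting that $A_r(n)=\End_A(V)$, to get
\[
\End_G\Bigl(\bigoplus_{\bfa\in W_{r,k}}s_\bfa{V_n}^{\otimes r}\Bigr)\cong\bigoplus_{\bfa,\bfb\in W_{r,k}}s_\bfa\,A_r(n)\,s_\bfb,
\]
with the multiplication $(s_\bfa\varphi s_\bfb)\cdot(s_\bfc\psi s_\bfd)=\delta_{\bfb,\bfc}\,s_\bfa\varphi s_\bfb\psi s_\bfd$ described there. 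Finally I would remark that this is literally the definition of the painted algebra $\tilde A_{r,k}(n)$ of $A_r(n)=\End_G({V_n}^{\otimes r})$ with respect to $\{s_\bfa\}$, completing the chain of isomorphisms. One small point worth spelling out is that ${V_n}^{\otimes r}$ is semisimple as a $\C\SG_n$-module (it is a finite-dimensional module over a semisimple algebra), which is exactly the hypothesis Lemma~\ref{lem:decompose_by_idempotents} needs; and that $s_\bfa V={V_n}^{\otimes r}\cdot s_\bfa$ agrees with the summands appearing in Lemma~\ref{lem:induced_isomorphism}, so the two $\End$-descriptions genuinely match term by term.

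The only genuine obstacle is not an obstacle so much as a bookkeeping check: one must be sure that the idempotents $s_\bfa$ really belong to $\End_G({V_n}^{\otimes r})$ and not merely to $\End_{GL_n}({V_n}^{\otimes r})$ — but this is immediate from the containment $\End_{GL_n}({V_n}^{\otimes r})\subseteq\End_G({V_n}^{\otimes r})$, which holds for any subgroup $G$. Everything else is citation of Lemma~\ref{lem:induced_isomorphism}, Lemma~\ref{lem:decompose_by_idempotents}, and the definition of $\tilde B$. So the proof is essentially: rewrite the endomorphism algebra via $\Phi$, recognize the $s_\bfa$ as distinguished idempotents of $A_r(n)$, apply the idempotent-decomposition lemma, and identify the result with the painted algebra by definition.
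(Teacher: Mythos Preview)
Your overall strategy matches the paper's exactly: invoke Lemma~\ref{lem:induced_isomorphism}, observe that each $s_\bfa\in\End_G({V_n}^{\otimes r})$ via the inclusion $\End_{GL_n}({V_n}^{\otimes r})\subseteq\End_G({V_n}^{\otimes r})$, apply Lemma~\ref{lem:decompose_by_idempotents}, and recognize the result as the painted algebra by definition. There is, however, a mismatch in your application of Lemma~\ref{lem:decompose_by_idempotents}. You set $A=\C\SG_n$ and then assert $A_r(n)=\End_A(V)$, but $\End_{\C\SG_n}({V_n}^{\otimes r})$ is $P_r(n)$, not $\End_G({V_n}^{\otimes r})$; these coincide only when $G=\SG_n$. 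As written, your argument establishes the theorem only in that special case, and the displayed conclusion $\End_G(\bigoplus_\bfa s_\bfa V)\cong\bigoplus_{\bfa,\bfb}s_\bfa A_r(n)s_\bfb$ does not follow from the lemma with the $A$ you chose.

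The fix is simply to take $A=\C G$, so that $\End_A(V)=\End_G({V_n}^{\otimes r})=A_r(n)$ and the lemma delivers exactly what you wrote down; this is what the paper does (implicitly). You might then worry about the semisimplicity hypothesis of Lemma~\ref{lem:decompose_by_idempotents} for arbitrary $G$, which you had sidestepped by using $\C\SG_n$. The paper does not comment on this, but in fact the hypothesis is not needed: since each $e_j=s_\bfb$ is an idempotent in $\End_A(V)$, one always has the $A$-module splitting $V=e_jV\oplus(1-e_j)V$, and that is the only use of semisimplicity in the proof of the lemma.
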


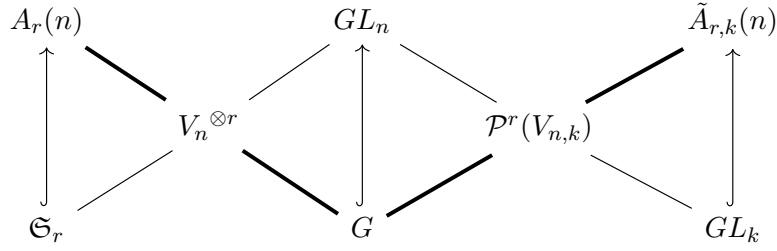
\begin{figure}[h]
    \centering
    \begin{tikzcd}
A_r(n)  \arrow[rd, no head, line width=1.5pt] &          & GL_n   \arrow[ld, no head] \arrow[rd, no head] &    & {\tilde{A}_{r,k}(n)} \arrow[ld, no head, line width=1.5pt]  \\
                       & {V_n}^{\otimes r}  &       & {\PP^r(V_{n,k})}  &    \\
\SG_r \arrow[uu, hook] \arrow[ur, no head] &      & G \arrow[uu, hook] \arrow[ul, no head, line width=1.5pt] \arrow[ur, no head, line width=1.5pt] &       & GL_k \arrow[uu, hook] \arrow[ul, no head]
\end{tikzcd}
    \caption{This diagram illustrates how Theorem \ref{thm:isomorphic_to_painted} relates the centralizers of a group $G$ under Schur-Weyl duality (left) and Howe duality (right). Note that the diagonals should only be taken to signify \textit{mutual} centralizers when the conditions of the double-centralizer theorem are satisfied (e.g. when $G$ is a linear reductive subgroup).}
    \label{fig:seesaw}
\end{figure}

Now, we use this isomorphism to construct a basis for $\MP_{r,k}(n)\cong\End_{\SG_n}(\PP^r(V_{n,k}))$ from the diagram basis of $P_r(n)$. Such a basis can be similarly constructed for other subalgebras of $P_r(n)$ that contain $\SG_r$ (see Remark \ref{rem:other_subalgebras}). In general, the projection $s_\bfa L_\pi s_\bfb$ can be computed as follows.

\begin{align*}
    s_\bfa\LL_\pi s_\bfb&=\frac{1}{\abs{\SG_\bfa\times\SG_\bfb}}\sum_{(\sigma,\sigma')\in \SG_\bfa\times\SG_\bfb} \LL_\sigma\LL_\pi\LL_{\sigma'}\\
    &=\frac{1}{\abs{\SG_\bfa\times\SG_\bfb}}\sum_{(\sigma,\sigma')\in \SG_\bfa\times\SG_\bfb} \LL_{\sigma.\pi.\sigma'}
\end{align*}

Each diagram basis element $\LL_\pi$ projects to the sum of the orbit of $\pi$ under the $\SG_\bfa\times\SG_{\bfb}$-action. Because the orbits are disjoint, the set of distinct projections are linearly independent and hence form a basis of $s_\bfa P_r(n) s_\bfb$. Due to the correspondence between orbits of set partitions under the action of a Young subgroup and multiset partitions given in Section \ref{sec:orbits}, we know that these basis elements are indexed by multiset partitions obtained by applying the map $\kappa_{\bfa,\bfb}$ to set partitions. Diagrammatically, this corresponds to coloring in the vertices of elements in $\Pi_{2(r)}$ with colors whose multiplicities are given by $\bfa$ on the top and $\bfb$ on the bottom.

For $\tpi\in\tilde\Pi_{2(r),k}$ define $\DD_\tpi=s_\bfa\LL_\pi s_\bfb$ where $\pi$ is any set partition in the orbit corresponding to $\tpi$. We then see that $\{\DD_\tpi:\tpi\in\tilde\Pi\}$ is a basis for $\tilde{P}_{r,k}(n)\cong \MP_{r,k}(n)$. Using the formula in Lemma \ref{lem:decompose_by_idempotents}, the product $\DD_\tpi\DD_\tnu$ for $\tpi\in\tilde\Pi_{2(r),k}$ with multiplicities on top and bottom given by $\bfa$ and $\bfb$ respectively and $\tnu\in\tilde\Pi_{2(r),k}$ with multiplicities on top and bottom given by $\bfb'$ and $\bfc$ respectively is the following.

\begin{align*}
    \DD_\tpi\DD_\tnu&=(s_\bfa\LL_\pi s_\bfb)\cdot(s_{\bfb'}\LL_\nu s_\bfc)\\
    &=\delta_{\bfb,\bfb'}s_\bfa\LL_\pi s_\bfb \LL_\nu s_\bfc\\
    &=\frac{\delta_{\bfb,\bfb'}}{\abs{\SG_\bfb}}\sum_{\sigma\in\SG_\bfb}s_\bfa\LL_\pi\LL_\sigma\LL_\nu s_\bfc\\
    &=\frac{\delta_{\bfb,\bfb'}}{\abs{\SG_\bfb}}\sum_{\sigma\in\SG_\bfb}s_\bfa\LL_\pi\LL_{
    \sigma.\nu} s_\bfc
\end{align*}

To interpret this product combinatorially, it will be helpful to assign a combinatorial object to each term in the sum.

\begin{defi}\label{def:snapshot}
    For a pair $\tpi,\tnu\in\tilde\Pi_{2(r),k}$, a \defn{snapshot} is a pair $(\pi,\nu)$ where $\kappa_{\bfa,\bfb}(\pi)=\tpi$ and $\kappa_{\bfb,\bfc}=\tnu$. We can represent these visually as a stack of partition diagrams whose vertices are painted from $k$ colors. To differentiate from the multiset partition diagrams (and to emphasize that the identically colored vertices in this situation are not interchangeable and are instead fixed in place), we draw the vertices as open circles.
\end{defi}

  The formula above can then be thought of as beginning with any snapshot $(\pi,\nu)$ and them summing over the snapshots $\{(\pi,\sigma.\nu):\sigma\in\SG_\bfb\}$. In the summand, $\LL_\pi\LL_{\sigma.\nu}$ is the product of the two set partitions as elements of $P_r(n)$ and multiplying by the idempotents $s_\bfa$ and $s_\bfb$ projects to the diagram-like basis element corresponding to the multiset partition obtained by filling in the vertices.

\begin{exam} The product of $\begin{array}{c}
    \begin{tikzpicture}[xscale=.35,yscale=.35,line width=1.25pt] 
        \foreach \i in {1,2,3,4}  { \path (\i,1.25) coordinate (T\i); \path (\i,.25) coordinate (B\i); } 
        \filldraw[fill= black!12,draw=black!12,line width=4pt]  (T1) -- (T4) -- (B4) -- (B1) -- (T1);
        \draw[black] (T1)--(B1)--(T2)--(T1);
        \draw[black] (B2)--(B3);
        \draw[black] (T4)--(B4);
        \colortop{1,2,3}{c1}
        \colortop{4}{c2}
        \colorbot{1,2}{c1}
        \colorbot{3,4}{c2}
    \end{tikzpicture}
    \end{array}$ and $\begin{array}{c}
    \begin{tikzpicture}[xscale=.35,yscale=.35,line width=1.25pt] 
        \foreach \i in {1,2,3,4}  { \path (\i,1.25) coordinate (T\i); \path (\i,.25) coordinate (B\i); } 
        \filldraw[fill= black!12,draw=black!12,line width=4pt]  (T1) -- (T4) -- (B4) -- (B1) -- (T1);
        \draw[black] (T1)--(B1)--(B2)--(T1);
        \draw[black] (T4)--(B4)--(B3)--(T4);
        \colortop{1,2}{c1}
        \colortop{3,4}{c2}
        \colorbot{1,2,3,4}{c2}
    \end{tikzpicture}
    \end{array}$ is given by choosing a snapshot, such as $\left(\begin{array}{c}
    \begin{tikzpicture}[xscale=.35,yscale=.35,line width=1.25pt] 
        \foreach \i in {1,2,3,4}  { \path (\i,1.25) coordinate (T\i); \path (\i,.25) coordinate (B\i); } 
        \filldraw[fill= black!12,draw=black!12,line width=4pt]  (T1) -- (T4) -- (B4) -- (B1) -- (T1);
        \draw[black] (T1)--(B1)--(T2)--(T1);
        \draw[black] (B2)--(B3);
        \draw[black] (T4)--(B4);
        \colortopfixed{1,2,3}{c1}
        \colortopfixed{4}{c2}
        \colorbotfixed{1,2}{c1}
        \colorbotfixed{3,4}{c2}
    \end{tikzpicture}
    \end{array}, \begin{array}{c}
    \begin{tikzpicture}[xscale=.35,yscale=.35,line width=1.25pt] 
        \foreach \i in {1,2,3,4}  { \path (\i,1.25) coordinate (T\i); \path (\i,.25) coordinate (B\i); } 
        \filldraw[fill= black!12,draw=black!12,line width=4pt]  (T1) -- (T4) -- (B4) -- (B1) -- (T1);
        \draw[black] (T1)--(B1)--(B2)--(T1);
        \draw[black] (T4)--(B4)--(B3)--(T4);
        \colortopfixed{1,2}{c1}
        \colortopfixed{3,4}{c2}
        \colorbotfixed{1,2,3,4}{c2}
    \end{tikzpicture}
    \end{array}\right)$, then acting on the top of the second diagram with each permutation in $\SG_{(2,2)}$.

\newcommand{\topdiagram}{\begin{tikzpicture}[xscale=.5,yscale=.5,line width=1.25pt] 
        \foreach \i in {1,2,3,4}  { \path (\i,1.25) coordinate (T\i); \path (\i,.25) coordinate (B\i); } 
        \filldraw[fill= black!12,draw=black!12,line width=4pt]  (T1) -- (T4) -- (B4) -- (B1) -- (T1);
        \draw[black] (T1)--(B1)--(T2)--(T1);
        \draw[black] (B2)--(B3);
        \draw[black] (T4)--(B4);
        \colortopfixed{1,2,3}{c1}
        \colortopfixed{4}{c2}
        \colorbotfixed{1,2}{c1}
        \colorbotfixed{3,4}{c2}
    \end{tikzpicture}}

\begin{align*}
    \frac{1}{2!2!}\left(
    \begin{array}{c}
    \topdiagram \\
    \begin{tikzpicture}[xscale=.5,yscale=.5,line width=1.25pt] 
    \foreach \i in {1,2,3,4}  { \path (\i,1.25) coordinate (T\i); \path (\i,.25) coordinate (B\i); } 
        \filldraw[fill= black!12,draw=black!12,line width=4pt]  (T1) -- (T4) -- (B4) -- (B1) -- (T1);
        \draw[black] (T1)--(B1)--(B2)--(T1);
        \draw[black] (T4)--(B4)--(B3)--(T4);
        \colortopfixed{1,2}{c1}
        \colortopfixed{3,4}{c2}
        \colorbotfixed{1,2,3,4}{c2}
    \end{tikzpicture}
    \end{array}\!+\!\begin{array}{c}
    \topdiagram \\
    \begin{tikzpicture}[xscale=.5,yscale=.5,line width=1.25pt] 
    \foreach \i in {1,2,3,4}  { \path (\i,1.25) coordinate (T\i); \path (\i,.25) coordinate (B\i); } 
        \filldraw[fill= black!12,draw=black!12,line width=4pt]  (T1) -- (T4) -- (B4) -- (B1) -- (T1);
        \draw[black] (T2)--(B1)--(B2)--(T2);
        \draw[black] (T4)--(B4)--(B3)--(T4);
        \colortopfixed{1,2}{c1}
        \colortopfixed{3,4}{c2}
        \colorbotfixed{1,2,3,4}{c2}
    \end{tikzpicture}
    \end{array}\!+\!\begin{array}{c}
    \topdiagram \\
    \begin{tikzpicture}[xscale=.5,yscale=.5,line width=1.25pt] 
    \foreach \i in {1,2,3,4}  { \path (\i,1.25) coordinate (T\i); \path (\i,.25) coordinate (B\i); } 
        \filldraw[fill= black!12,draw=black!12,line width=4pt]  (T1) -- (T4) -- (B4) -- (B1) -- (T1);
        \draw[black] (T1)--(B1)--(B2)--(T1);
        \draw[black] (T3)--(B4)--(B3)--(T3);
        \colortopfixed{1,2}{c1}
        \colortopfixed{3,4}{c2}
        \colorbotfixed{1,2,3,4}{c2}
    \end{tikzpicture}
    \end{array}
    \!+\!\begin{array}{c}
    \topdiagram \\
    \begin{tikzpicture}[xscale=.5,yscale=.5,line width=1.25pt] 
    \foreach \i in {1,2,3,4}  { \path (\i,1.25) coordinate (T\i); \path (\i,.25) coordinate (B\i); } 
        \filldraw[fill= black!12,draw=black!12,line width=4pt]  (T1) -- (T4) -- (B4) -- (B1) -- (T1);
        \draw[black] (T2)--(B1)--(B2)--(T2);
        \draw[black] (T3)--(B4)--(B3)--(T3);
        \colortopfixed{1,2}{c1}
        \colortopfixed{3,4}{c2}
        \colorbotfixed{1,2,3,4}{c2}
    \end{tikzpicture}
    \end{array}\right)\\
    =\frac{1}{4}\left(n\begin{array}{c}
    \begin{tikzpicture}[xscale=.5,yscale=.5,line width=1.25pt] 
    \foreach \i in {1,2,3,4}  { \path (\i,1.25) coordinate (T\i); \path (\i,.25) coordinate (B\i); } 
    \filldraw[fill= black!12,draw=black!12,line width=4pt]  (T1) -- (T4) -- (B4) -- (B1) -- (T1);
    \draw[black] (T1)--(T2)--(B2)--(B1)--(T1);
    \draw[black] (T4)--(B4)--(B3)--(T4);
    \colortop{1,2,3}{c1}
    \colortop{4}{c2}
    \colorbot{1,2,3,4}{c2}
    \end{tikzpicture}
    \end{array}
    \!+\!\begin{array}{c}
    \begin{tikzpicture}[xscale=.5,yscale=.5,line width=1.25pt] 
    \foreach \i in {1,2,3,4}  { \path (\i,1.25) coordinate (T\i); \path (\i,.25) coordinate (B\i); } 
    \filldraw[fill= black!12,draw=black!12,line width=4pt]  (T1) -- (T4) -- (B4) -- (B1) -- (T1);
    \draw[black] (T1)--(T2);
    \draw[black] (B1)--(B2);
    \draw[black] (T4)--(B4)--(B3)--(T4);
    \colortop{1,2,3}{c1}
    \colortop{4}{c2}
    \colorbot{1,2,3,4}{c2}
    \end{tikzpicture}
    \end{array}
    \!+\!\begin{array}{c}
    \begin{tikzpicture}[xscale=.5,yscale=.5,line width=1.25pt] 
    \foreach \i in {1,2,3,4}  { \path (\i,1.25) coordinate (T\i); \path (\i,.25) coordinate (B\i); } 
    \filldraw[fill= black!12,draw=black!12,line width=4pt]  (T1) -- (T4) -- (B4) -- (B1) -- (T1);
    \draw[black] (T1)--(T2)--(B2)--(B1)--(T1);
    \draw[black] (B3)--(B4);
    \colortop{1,2,3}{c1}
    \colortop{4}{c2}
    \colorbot{1,2,3,4}{c2}
    \end{tikzpicture}
    \end{array}
    \!+\!\begin{array}{c}
    \begin{tikzpicture}[xscale=.5,yscale=.5,line width=1.25pt] 
    \foreach \i in {1,2,3,4}  { \path (\i,1.25) coordinate (T\i); \path (\i,.25) coordinate (B\i); } 
    \filldraw[fill= black!12,draw=black!12,line width=4pt]  (T1) -- (T4) -- (B4) -- (B1) -- (T1);
    \draw[black] (T1)--(T2);
    \draw[black] (B1)--(B4);
    \colortop{1,2,3}{c1}
    \colortop{4}{c2}
    \colorbot{1,2,3,4}{c2}
    \end{tikzpicture}
    \end{array}\right)
\end{align*}

 Another snapshot we could have chosen is $\left(\begin{array}{c}
    \begin{tikzpicture}[xscale=.35,yscale=.35,line width=1.25pt] 
        \foreach \i in {1,2,3,4}  { \path (\i,1.25) coordinate (T\i); \path (\i,.25) coordinate (B\i); } 
        \filldraw[fill= black!12,draw=black!12,line width=4pt]  (T1) -- (T4) -- (B4) -- (B1) -- (T1);
        \draw[black] (T1)--(B1)--(T2)--(T1);
        \draw[black] (B2) .. controls +(0,+0.4) and +(0,+0.4) .. (B4);
        \draw[black] (T4)--(B3);
        \colortopfixed{1,2,3}{c1}
        \colortopfixed{4}{c2}
        \colorbotfixed{1,2}{c1}
        \colorbotfixed{3,4}{c2}
    \end{tikzpicture}
    \end{array}, \begin{array}{c}
    \begin{tikzpicture}[xscale=.35,yscale=.35,line width=1.25pt] 
        \foreach \i in {1,2,3,4}  { \path (\i,1.25) coordinate (T\i); \path (\i,.25) coordinate (B\i); } 
        \filldraw[fill= black!12,draw=black!12,line width=4pt]  (T1) -- (T4) -- (B4) -- (B1) -- (T1);
        \draw[black] (T2)--(B1)--(B2)--(T2);
        \draw[black] (T4)--(B4)--(B3)--(T4);
        \colortopfixed{1,2}{c1}
        \colortopfixed{3,4}{c2}
        \colorbotfixed{1,2,3,4}{c2}
    \end{tikzpicture}
    \end{array}\right)$ because if the vertices were filled in, the resulting multiset partition diagrams would still be identical to the two we are taking the product of.
\end{exam}

We call this basis $\{D_\tpi:\tpi\in\tilde{\Pi}_{2r,k}\}$ for $\MP_{r,k}(n)\cong\End_{\SG_n}(\PP^r(V_{n,k}))$ the \defn{diagram-like basis}.

\begin{rema} This perspective resolves the question of how the multiset partition algebra $\MP_\bfa(n)$ for $\bfa\in W_{r,k}$ defined in \cite{narayanan} sits inside $\MP_{r,k}(n)$:\begin{align*}
    \MP_{\bfa}(n)&\cong\End_{\SG_n}(s_\bfa {V_n}^{\otimes r})\\
    &\cong s_\bfa P_r(n) s_\bfa.
\end{align*} Note that $s_\bfa P_r(n) s_\bfa$ is the span of diagram-like basis elements in $\MP_{r,k}(n)$ whose colors on top and bottom have multiplicity given by $\bfa$.
\end{rema}

\begin{rema}\label{rem:other_subalgebras}
    Let $A_r(n)\subseteq P_r(n)$ be a subalgebra spanned by the diagram-basis elements indexed by the set partitions in some set $\Pi$. If $\Pi$ contains the set partitions corresponding to the symmetric group $\SG_r$, then the construction of the diagram-like basis above restricts to the painted version of this subalgebra. In particular, $\tilde{A}_{r,k}(n)$ has basis given by the diagram-like basis elements indexed by $\biguplus_{\bfa,\bfb\in W_{r,k}}\kappa_{\bfa,\bfb}(\Pi)$.

    For a concrete example, the Brauer algebra $B_r(n)$ is the subalgebra of $P_r(n)$ spanned by the set partition diagrams whose blocks are all of size two. The corresponding painted Brauer algebra $\tilde{B}_{r,k}(n)$ has a basis indexed by the multiset partition diagrams whose blocks are all of size two.

    Note that planar subalgebras of $P_r(n)$ (such as the Temperley-Lieb algebra) do not contain the symmetric group, and hence this painted algebra construction does not extend nicely to them. It may be the case that there are painted analogues of these planar algebras which centralize a group action on $\PP^r(V_{n,k})$, but they do not seem amenable to description with the methods of this paper.
\end{rema}

\section{Irreducible Representations of \texorpdfstring{$\MP_{r,k}(n)$}{MP_r,k(n)}}\label{sec:reps}

In this section, we aim to construct the irreducible representations of $\MP_{r,k}(n)$. Part (2) of Lemma \ref{lem:symmetrized_modules} tells us that in order to construct each of these irreducible representations, we need only consider the irreducible $P_r(n)$ representations painted with respect to the idempotents $\{s_\bfa:\bfa\in W_{r,k}\}$. For $\lambda\in\Lambda^{P_r(n)}$, define \[\MP_{r,k}^\lambda\vcentcolon= \tilde{P_r^\lambda}= \bigoplus_{\bfa\in W_{r,k}}s_\bfa P_r^\lambda.\]

By Lemma \ref{lem:symmetrized_modules}, each module in $\{\MP_{r,k}^\lambda:\lambda\in\Lambda^{P_r(n)}\}$ is either a simple $\MP_{r,k}(n)$-module or the zero module, and each simple $\MP_{r,k}(n)$ module appears in the set. To investigate the structure of these modules, we note that for $T\in\SPT_{\lambda,r}$, the projection $s_\bfa v_T$ is the average over the $\SG_\bfa$-orbit of $T$:

\[s_\bfa v_T=\frac{1}{\abs{\SG_\bfa.T}}\sum_{S\in\SG_\bfa.T}v_S\] This orbit corresponds to $\tT=\kappa_\bfa(T)\in\MPT_{\lambda,r,k}$, and so we define \[w_\tT=s_\bfa v_T=\frac{1}{\abs{\kappa_\bfa^{-1}(\tT)}}\sum_{T\in\kappa_\bfa^{-1}(\tT)} v_T\] where $\kappa_\bfa^{-1}(\tT)$ is the preimage of $\tT$ under the coloring map.

\begin{lemma}\label{lem:painting_standard} If $T\in\SSPT_{\lambda,r}$, then either $w_\tT=0$ or $\tT\in\SSMPT_{\lambda,r,k}$.
\end{lemma}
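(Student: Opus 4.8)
The plan is to prove the sharper statement by contradiction: assuming $T\in\SSPT_{\lambda,r}$ but $\tT=\kappa_\bfa(T)\notin\SSMPT_{\lambda,r,k}$, I will produce a fixed-point-free, sign-reversing involution on the fibre $\kappa_\bfa^{-1}(\tT)$, which forces $\sum_{T'\in\kappa_\bfa^{-1}(\tT)}v_{T'}$ — and hence $w_\tT$ — to vanish.

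First I would pin down the defect in $\tT$. By Remark~\ref{rmk:semistandard_tableaux}, $\tT$ has weakly increasing rows and weakly increasing columns, so the only obstruction to semistandardness is a column $c$ carrying two vertically adjacent cells, in rows $i$ and $i+1$, with equal content $\tB$. The next (and most delicate) point is that both of these cells lie strictly above the first row. The cell $(i+1,c)$ does automatically, and being above the first row it is a genuine block of the content multiset partition, so $\tB$ is nonempty. On the other hand, writing $\rho$ for the content of $T$, we have $\ell(\rho)\le r$, so under the standing hypothesis $n\ge 2r$ the tableau $\tT$ has $n-\ell(\rho)\ge 2r-\ell(\rho)\ge r\ge\abs{\lambda^*}\ge\lambda_2$ empty boxes; since the empty cell is smallest in the last-letter order and rows weakly increase, these empty boxes occupy the leftmost positions of row $1$, so the cells $(1,1),\dots,(1,\lambda_2)$ of $\tT$ are all empty. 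As the column $c$ has height at least $2$ we have $c\le\lambda_2$, so if $i$ were $1$ then the cell $(1,c)$ would be simultaneously empty and equal to $\tB\neq\emptyset$, a contradiction. Hence $i\ge 2$ and both cells sit above the first row — exactly the regime in which the special Garnir relation recalled in Section~\ref{sec:partitionalgebra} applies, namely $v_{U'}=-v_U$ whenever $U'$ is obtained from $U$ by exchanging two boxes lying in a common column above the first row.

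Next I would build the involution. Since $w_\tT=\frac1{\abs{\kappa_\bfa^{-1}(\tT)}}\sum_{T'\in\kappa_\bfa^{-1}(\tT)}v_{T'}$, let $\phi$ be the operation that swaps the contents of the cells $(i,c)$ and $(i+1,c)$. For any $T'\in\kappa_\bfa^{-1}(\tT)$ these two cells hold distinct blocks $B_1\neq B_2$ of the content of $T'$ (distinct because the blocks of a set partition of $[r]$ are disjoint and nonempty), both satisfying $\kappa_\bfa(B_1)=\kappa_\bfa(B_2)=\tB$; thus $\phi(T')$ has the same shape and the same content set partition as $T'$ and still satisfies $\kappa_\bfa(\phi(T'))=\tT$, so $\phi$ is a self-map of $\kappa_\bfa^{-1}(\tT)$. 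It is clearly an involution, and it is fixed-point-free since $B_1\neq B_2$. Applying the Garnir relation of the previous paragraph gives $v_{\phi(T')}=-v_{T'}$ for every $T'$; grouping the summands into the size-two orbits of $\phi$ then yields $\sum_{T'\in\kappa_\bfa^{-1}(\tT)}v_{T'}=0$, so $w_\tT=0$, as required.

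The main obstacle is the middle step: one has to certify that the repeated pair of cells avoids the first row. This is where the hypothesis $n\ge 2r$ (through the count of empty boxes) does the real work, and it is essential, since the antisymmetry of the Specht-type relations is only available strictly above the first row — the first row being precisely the "inert" row of empty boxes. Once that is secured, the pairing argument is routine.
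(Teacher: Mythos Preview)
Your proposal is correct and follows essentially the same argument as the paper: swapping the two identically-coloured cells in the offending column gives a sign-reversing involution on the fibre $\kappa_\bfa^{-1}(\tT)$, so the defining sum for $w_\tT$ cancels. Your write-up is in fact more careful than the paper's, explicitly verifying via the $n\ge 2r$ hypothesis that both repeated cells lie strictly above the first row (so that the column-swap Garnir relation applies); the paper leaves this implicit, relying on the earlier remark that the number of empty first-row boxes is at least $\lambda_2$.
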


\begin{proof}
    As observed in Remark \ref{rmk:semistandard_tableaux}, if $T\in\SSPT_{\lambda,r}$, then $\tT$ has rows and columns weakly increasing. If $\tT$ is not semistandard, then it must have two boxes within the same column that have identical contents. For a tableau $T$, write $T'$ for the tableau obtained by swapping the content of these two boxes. Then \[w_{\tT}=w_{\tilde{T'}}=\frac{1}{\abs{\SG_\bfa.T}}\sum_{S\in\SG_\bfa.T}v_{S'}=\frac{1}{\abs{\SG_\bfa.T}}\sum_{S\in\SG_\bfa.T}-v_{S}=-w_{\tT}.\] Hence, if $\tT\notin\SSMPT_{\lambda,r,k}$, then $w_\tT=0$.
\end{proof}

We can now use Lemma \ref{lem:painting_standard} to describe a straightening algorithm for $\MP_{r,k}^\lambda$. Suppose $\tT$ is not semistandard and $w_\tT\neq0$. Then there exists a nonstandard $T$ in the $\SG_\bfa$-orbit corresponding to $\tT$. Then using the straightening algorithm for $P_r^\lambda$, we can write \[v_T=\sum_{S\in\SSPT_{\lambda,r}}c_Sv_S.\]
Then projecting by $s_\bfa$, we obtain \[w_\tT=s_\bfa v_T=\sum_{S\in\SSPT_{\lambda,r}} c_S s_\bfa v_S=\sum_{S\in\SSPT_{\lambda,r}} c_S w_\tS\] where each $\tS$ for which $w_{\tS}\neq0$ is semistandard.

\begin{theorem} The set $\{\MP_{r,k}^\lambda:\lambda\in\Lambda^{\MP_{r,k}(n)}\}$ forms a complete set of irreducible representations for $\MP_{r,k}(n)$ and for each $\lambda\in\Lambda^{\MP_{r,k}(n)}$, the set $\{w_\tT:\tT\in\SSMPT_{\lambda,r,k}\}$ forms a basis of $\MP_{r,k}^\lambda$.
\end{theorem}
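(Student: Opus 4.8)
The real content of this statement is already packaged in Lemmas~\ref{lem:symmetrized_modules} and~\ref{lem:painting_standard}, so the plan is to combine them with the known dimension of $\MP_{r,k}(n)$ recorded in Section~\ref{sec:prelim}. First I would record the spanning statement: since $\{v_T : T\in\SSPT_{\lambda,r}\}$ is a basis of $P_r^\lambda$, the projections $s_\bfa v_T$ ranging over all $\bfa\in W_{r,k}$ and $T\in\SSPT_{\lambda,r}$ span $\MP_{r,k}^\lambda=\bigoplus_\bfa s_\bfa P_r^\lambda$. Each such projection equals $w_{\kappa_\bfa(T)}$, and by Remark~\ref{rmk:semistandard_tableaux} together with Lemma~\ref{lem:painting_standard} this is either $0$ or of the form $w_\tT$ with $\tT\in\SSMPT_{\lambda,r,k}$. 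Hence $\{w_\tT:\tT\in\SSMPT_{\lambda,r,k}\}$ spans $\MP_{r,k}^\lambda$; in particular $\dim\MP_{r,k}^\lambda\le\#\SSMPT_{\lambda,r,k}$, so $\MP_{r,k}^\lambda=0$ whenever $\SSMPT_{\lambda,r,k}=\emptyset$, i.e.\ whenever $\lambda\notin\Lambda^{\MP_{r,k}(n)}$. One point requiring a little care here is that $w_\tT$ must be interpreted for nonstandard $\tT$ via the straightening algorithm for $\MP_{r,k}^\lambda$ described just before the theorem, so that it is well defined and lies in the module even then.

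Next I would pin down the complete list of simples by a counting argument. By part~(2) of Lemma~\ref{lem:symmetrized_modules} every simple $\MP_{r,k}(n)$-module is isomorphic to $\MP_{r,k}^\mu$ for some $\mu\in\Lambda^{P_r(n)}$, and being nonzero it forces $\SSMPT_{\mu,r,k}\ne\emptyset$ by the previous paragraph, hence $\mu\in\Lambda^{\MP_{r,k}(n)}$. So the family $\{\MP_{r,k}^\lambda:\lambda\in\Lambda^{\MP_{r,k}(n)}\}$ already accounts (through its nonzero members, each simple by part~(1) of Lemma~\ref{lem:symmetrized_modules}) for every isomorphism class of simple $\MP_{r,k}(n)$-module. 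By the RSK-type bijection of \cite{MR4090833} recalled in Section~\ref{sec:prelim}, there are exactly $\abs{\Lambda^{\MP_{r,k}(n)}}$ such classes, pairwise nonisomorphic. A family indexed by $\Lambda^{\MP_{r,k}(n)}$ whose nonzero members exhaust $\abs{\Lambda^{\MP_{r,k}(n)}}$ distinct simples must consist of exactly those simples, each appearing once and none being zero; this is the first assertion of the theorem.

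Finally, for the basis claim I would upgrade $\dim\MP_{r,k}^\lambda\le\#\SSMPT_{\lambda,r,k}$ to equality by comparing total dimensions. Since $\MP_{r,k}(n)\cong\End_{\C\SG_n}(\PP^r(V_{n,k}))$ is semisimple (the endomorphism algebra of a semisimple module), and since the simple modules are exactly the $\MP_{r,k}^\lambda$ for $\lambda\in\Lambda^{\MP_{r,k}(n)}$ by the previous step, while \cite{MR4090833} gives $\dim\MP_{r,k}(n)=\sum_\lambda(\#\SSMPT_{\lambda,r,k})^2$, we get
\[
\sum_{\lambda\in\Lambda^{\MP_{r,k}(n)}}\bigl(\dim\MP_{r,k}^\lambda\bigr)^2 \;=\; \dim\MP_{r,k}(n) \;=\; \sum_{\lambda\in\Lambda^{\MP_{r,k}(n)}}\bigl(\#\SSMPT_{\lambda,r,k}\bigr)^2 .
\]
Term-by-term domination forces $\dim\MP_{r,k}^\lambda=\#\SSMPT_{\lambda,r,k}$ for every $\lambda$, so the spanning set $\{w_\tT:\tT\in\SSMPT_{\lambda,r,k}\}$ is in fact linearly independent, hence a basis.

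I do not expect a genuine obstacle here; the steps are essentially bookkeeping on top of the earlier lemmas. The point most worth double-checking is the spanning argument (in particular that the straightening of a nonstandard colored tableau stays within $\MP_{r,k}^\lambda$ and is compatible with the projections $s_\bfa$), and that we are entitled to quote the count of simples and the value of $\dim\MP_{r,k}(n)$ from the preliminaries. If one wanted to avoid invoking $\dim\MP_{r,k}(n)$ altogether, an alternative would be to identify $\MP_{r,k}^\lambda$ directly with the irreducible $W^\lambda_{\MP_{r,k}(n)}$ of Equation~\ref{eq:mprk_decomp} through the isomorphism $\MP_{r,k}(n)\cong\tilde{P}_{r,k}(n)$ of Section~\ref{sec:diagram_like_basis} and the realization of $P_r^\lambda$ as $\Hom_{\SG_n}(W^\lambda_{\SG_n},{V_n}^{\otimes r})$; I would keep this as a backup route.
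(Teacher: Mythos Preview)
Your proposal is correct and follows essentially the same approach as the paper: establish spanning by semistandard $w_\tT$'s via Lemma~\ref{lem:painting_standard}, use Lemma~\ref{lem:symmetrized_modules} together with the count $\abs{\Lambda^{\MP_{r,k}(n)}}$ of simples to conclude the list is complete, and then upgrade the spanning inequality to equality via the sum-of-squares identity $\sum_\lambda(\dim\MP_{r,k}^\lambda)^2=\dim\MP_{r,k}(n)=\sum_\lambda(\#\SSMPT_{\lambda,r,k})^2$. You spell out a few steps (e.g.\ why none of the $\MP_{r,k}^\lambda$ vanish) more explicitly than the paper does, but there is no substantive difference.
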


\begin{proof} 
    Because $\MP_{r,k}^\lambda$ is the span of $\{w_\tT:\tT\in\SSMPT_{\lambda,r,k}\}$, we know that $\MP_{r,k}^\lambda=0$ unless $\lambda\in\Lambda^{\MP_{r,k}(n)}$. We then have that each of the $\abs{\Lambda^{\MP_{r,k}(n)}}$ irreducible representations appears in the smaller set $\{\MP_{r,k}^\lambda:\lambda\in\Lambda^{\MP_{r,k}(n)}\}$. Because there are only $\abs{\Lambda^{\MP_{r,k}(n)}}$ representations in this set, it must be a complete set of irreducible representations for $\MP_{r,k}(n)$.

    A priori, we do not know that these $w_\tT$ are linearly independent, so we can only conclude that $\dim(\MP_{r,k}^\lambda)\leq\#\SSMPT_{\lambda,r,k}$. However, we do know that
\[\sum_{\lambda\in\Lambda^{\MP_{r,k}(n)}}(\dim(\MP_{r,k}^\lambda))^2=\dim(\MP_{r,k}(n))=\sum_{\lambda\in\Lambda^{\MP_{r,k}(n)}}(\#\SSMPT_{\lambda,r,k})^2.\] Hence $\dim(\MP_{r,k}^\lambda)=\#\SSMPT_{\lambda,r,k}$ and so for each $\lambda\in\Lambda^{\MP_{r,k}(n)}$ the set $\{w_\tT:\tT\in\SSMPT_{\lambda,r,k}\}$ indeed forms a basis of $\MP_{r,k}^\lambda$.
\end{proof}

We now consider the action of an element $\DD_\tpi$ on $w_\tT$. Suppose that the multiplicities of colors in $\tpi$ are given by $\bfa$ and $\bfb$ respectively and that the multiplicities of elements in $\tT$ are given by $\bfc$. Then the definition of a painted module gives us the following formula: \begin{align*}
    \DD_\tpi.w_\tT&=s_\bfa L_\pi s_\bfb . s_\bfc v_T\\
                  &=\delta_{\bfb,\bfc}(s_\bfa L_\pi s_\bfb v_T)\\
                  &=\delta_{\bfb,\bfc}\sum_{\sigma\in\SG_\bfb}s_\bfa L_{\pi.\sigma} v_T
\end{align*}

We can interpret this formula for diagrams as follows.
\begin{enumerate}
    \item[(i)] Pull out the content of $\tT$, a multiset partition with $r$ elements from $[k]$, in a row above and fix the order.
    \item[(ii)] Place $\tpi$ on top and permute the vertices of the same color at the bottom in each possible way.
    \item[(iii)] For each permutation, compute the action as for $P_r^\lambda$.
    \item[(iv)] Sum the resulting tableaux and divide by the number of permutations.
\end{enumerate}

\begin{exam} The action of a multiset partition on a multiset partition tableau. Note that in the latter two permutations of the diagram, the content of the box in the second row does not reach the top of the diagram, so the output is zero.
\begin{align*}
    \begin{tabular}{c@{\hskip .8in}c@{\hskip .7in}c}
         \begin{tikzpicture}[line width=1.25pt, xscale=.7, yscale=.7]
            \path(1,1) node {$\inline{\begin{ytableau}
        12 \\
        1 \\
        \; & 1
    \end{ytableau}}$};
            \path(1,-1.7) node {$\inline{\begin{ytableau}
        11 \\
        1 \\
        \; & 2
    \end{ytableau}}$};
            \path (.37,1.2) coordinate (T21);
            \path (.37,2) coordinate (T31);
            \path (1.15,.4) coordinate (T12);
            \path (-.75,2.7) coordinate (C1);
            \path (.25,2.7) coordinate (C2);
            \path (1.25,2.7) coordinate (C3);
            \path (2.25,2.7) coordinate (C4);
            \path (-.75,3.2) coordinate (B1);
            \path (.25,3.2) coordinate (B2);
            \path (1.25,3.2) coordinate (B3);
            \path (2.25,3.2) coordinate (B4);
            \path (-.75,4.2) coordinate (T1);
            \path (.25,4.2) coordinate (T2);
            \path (1.25,4.2) coordinate (T3);
            \path (2.25,4.2) coordinate (T4);
            \draw[gray] (T21) .. controls +(-.6,0) and +(+.6,0) .. (C1);
            \draw[gray] (T31) .. controls +(0,+.6) and +(0,-.6) .. (C2);
            \draw[gray] (T12) .. controls +(0,+.8) and +(0,-.8) .. (C4);
            \draw[black] (C2) -- (C3);
            \draw[black] (T2) -- (T1) -- (B1);
            \draw[black] (T3) -- (B3);
            \filldraw[fill=gray,draw=gray,line width = 1pt] (T21) circle (2pt);
            \filldraw[fill=gray,draw=gray,line width = 1pt] (T31) circle (2pt);
            \filldraw[fill=gray,draw=gray,line width = 1pt] (T12) circle (2pt);
            \foreach \i in {C1, C2, C4, B1, B2, B4, T1, T2, T3} {\filldraw[fill=white,draw=c1,line width = 1pt] (\i) circle (4pt);}
            \foreach \i in {C3, B3, T4} {\filldraw[fill=white,draw=c2,line width = 1pt] (\i) circle (4pt);}
         \end{tikzpicture} &
         \begin{tikzpicture}[line width=1.25pt, xscale=.7, yscale=.7]
            \path(1,1) node {$\inline{\begin{ytableau}
        12 \\
        1 \\
        \; & 1
    \end{ytableau}}$};
            \path(1,-1.7) node {$0\vphantom{\inline{\begin{ytableau}
        12 \\
        1 \\
        \; & 1
    \end{ytableau}}}$};
            \path (.37,1.2) coordinate (T21);
            \path (.37,2) coordinate (T31);
            \path (1.15,.4) coordinate (T12);
            \path (-.75,2.7) coordinate (C1);
            \path (.25,2.7) coordinate (C2);
            \path (1.25,2.7) coordinate (C3);
            \path (2.25,2.7) coordinate (C4);
            \path (-.75,3.2) coordinate (B1);
            \path (.25,3.2) coordinate (B2);
            \path (1.25,3.2) coordinate (B3);
            \path (2.25,3.2) coordinate (B4);
            \path (-.75,4.2) coordinate (T1);
            \path (.25,4.2) coordinate (T2);
            \path (1.25,4.2) coordinate (T3);
            \path (2.25,4.2) coordinate (T4);
            \draw[gray] (T21) .. controls +(-.6,0) and +(+.6,0) .. (C1);
            \draw[gray] (T31) .. controls +(0,+.6) and +(0,-.6) .. (C2);
            \draw[gray] (T12) .. controls +(0,+.8) and +(0,-.8) .. (C4);
            \draw[black] (C2) -- (C3);
            \draw[black] (T2) -- (T1) -- (B2);
            \draw[black] (T3) -- (B3);
            \filldraw[fill=gray,draw=gray,line width = 1pt] (T21) circle (2pt);
            \filldraw[fill=gray,draw=gray,line width = 1pt] (T31) circle (2pt);
            \filldraw[fill=gray,draw=gray,line width = 1pt] (T12) circle (2pt);
            \foreach \i in {C1, C2, C4, B1, B2, B4, T1, T2, T3} {\filldraw[fill=white,draw=c1,line width = 1pt] (\i) circle (4pt);}
            \foreach \i in {C3, B3, T4} {\filldraw[fill=white,draw=c2,line width = 1pt] (\i) circle (4pt);}
         \end{tikzpicture} &
         \begin{tikzpicture}[line width=1.25pt,xscale=.7,yscale=.7]
            \path(1,1) node {$\inline{\begin{ytableau}
        12 \\
        1 \\
        \; & 1
    \end{ytableau}}$};
            \path(1,-1.7) node {$0\vphantom{\inline{\begin{ytableau}
        12 \\
        1 \\
        \; & 1
    \end{ytableau}}}$};
            \path (.37,1.2) coordinate (T21);
            \path (.37,2) coordinate (T31);
            \path (1.15,.4) coordinate (T12);
            \path (-.75,2.7) coordinate (C1);
            \path (.25,2.7) coordinate (C2);
            \path (1.25,2.7) coordinate (C3);
            \path (2.25,2.7) coordinate (C4);
            \path (-.75,3.2) coordinate (B1);
            \path (.25,3.2) coordinate (B2);
            \path (1.25,3.2) coordinate (B3);
            \path (2.25,3.2) coordinate (B4);
            \path (-.75,4.2) coordinate (T1);
            \path (.25,4.2) coordinate (T2);
            \path (1.25,4.2) coordinate (T3);
            \path (2.25,4.2) coordinate (T4);
            \draw[gray] (T21) .. controls +(-.6,0) and +(+.6,0) .. (C1);
            \draw[gray] (T31) .. controls +(0,+.6) and +(0,-.6) .. (C2);
            \draw[gray] (T12) .. controls +(0,+.8) and +(0,-.8) .. (C4);
            \draw[black] (C2) -- (C3);
            \draw[black] (T2) -- (T1) -- (B4);
            \draw[black] (T3) -- (B3);
            \filldraw[fill=gray,draw=gray,line width = 1pt] (T21) circle (2pt);
            \filldraw[fill=gray,draw=gray,line width = 1pt] (T31) circle (2pt);
            \filldraw[fill=gray,draw=gray,line width = 1pt] (T12) circle (2pt);
            \foreach \i in {C1, C2, C4, B1, B2, B4, T1, T2, T3} {\filldraw[fill=white,draw=c1,line width = 1pt] (\i) circle (4pt);}
            \foreach \i in {C3, B3, T4} {\filldraw[fill=white,draw=c2,line width = 1pt] (\i) circle (4pt);}
         \end{tikzpicture}
    \end{tabular}
\end{align*}
\[\tpi.\, \inline{\begin{ytableau}
        12 \\
        1 \\
        \; & 1
    \end{ytableau}}=\frac{1}{3}\,\inline{\begin{ytableau}
        1 \\
        11 \\
        \; & 2
    \end{ytableau}}=\frac{-1}{3}\inline{\begin{ytableau}
        11 \\
        1 \\
        \; & 2
    \end{ytableau}}\]
\end{exam}

\section{Generators}\label{sec:generators}

To describe a generating set for $\MP_{r,k}(n)$, we will give an algorithm for factoring out certain blocks from a diagram. 

\begin{defi}
    We call a block of the form $\multi{i,\ov i}$ a \defn{vertical bar}. A block of a multiset partition $\tpi$ that is not a vertical bar or a singleton is called a \defn{nonbasic block}. For a set partition $\pi$ with $\kappa_{\bfa,\bfb}(\pi)=\tpi$, we call a vertex in the diagram of $\pi$ \defn{nonbasic} if it is mapped to a nonbasic block under $\kappa_{\bfa,\bfb}$.
\end{defi}

We now define a statistic on multiset partitions and prove a lemma about how this statistic interacts with the diagram-like product.

\begin{defi}
    Write $N(\tpi)$ for the multiset of nonbasic blocks of $\tpi$ and define the \defn{nonbasic weight} of $\tpi$ to be \[\nbw(\tpi)=\sum_{\tB\in N(\tpi)}\abs{\tB}.\]
\end{defi}

\begin{exam} We compute the nonbasic weight of some multiset partitions. The nonbasic vertices have their vertices highlighted in green.
\begin{align*}
    \nbw\left(\begin{array}{c}\begin{tikzpicture}[xscale=.7,yscale=.7,line width=1.4pt] 
        \foreach \i in {1,2,3,4,5,6}  { \path (\i,1.25) coordinate (T\i); \path (\i,.25) coordinate (B\i);} 
        \filldraw[fill= black!12,draw=black!12,line width=4pt]  (T1) -- (T6) -- (B6) -- (B1) -- (T1);
        \draw[black] (T1)--(B1);
        \draw[black] (T2)--(T3)--(B3)--(B2)--(T2);
        \draw[black] (T4)--(T6)--(B6)--(T4);
        \draw[black] (B4)--(B5);
        \foreach \l in {T2,T3,T4,T5,T6,B2,B3,B4,B5,B6} {\draw[c3] (\l) circle (7pt);}
        \foreach \l in {T1,T2,T3,B1} {\filldraw[fill=white,draw=c1,line width = 1pt] (\l) circle (4pt);}
        \foreach \l in {T4,T5,T6,B2,B3,B4,B5,B6} {\filldraw[fill=white,draw=c2,line width = 1pt] (\l) circle (4pt);}
    \end{tikzpicture}\end{array}\right)&=2+4+4=10\\
    \nbw\left(\begin{array}{c}\begin{tikzpicture}[xscale=.7,yscale=.7,line width=1.4pt] 
        \foreach \i in {1,2,3,4,5,6}  { \path (\i,1.25) coordinate (T\i); \path (\i,.25) coordinate (B\i);} 
        \filldraw[fill= black!12,draw=black!12,line width=4pt]  (T1) -- (T6) -- (B6) -- (B1) -- (T1);
        \draw[black] (T1);
        \draw[black] (T2)--(B1);
        \draw[black] (B2)--(T3)--(B3)--(B2);
        \draw[black] (T4)--(T5);
        \draw[black] (B4) --(B6) -- (T6) -- (B4);
        \foreach \l in {T3,T4,T5,T6,B2,B3,B4,B5,B6} {\draw[c3] (\l) circle (7pt);}
        \foreach \l in {T1,T2,T3,B1} {\filldraw[fill=white,draw=c1,line width = 1pt] (\l) circle (4pt);}
        \foreach \l in {T4,T5,T6,B2,B3,B4,B5,B6} {\filldraw[fill=white,draw=c2,line width = 1pt] (\l) circle (4pt);}
    \end{tikzpicture}\end{array}\right)&=2+3+4=9
\end{align*}
\end{exam}

\begin{lemma} \label{lem:nonbasic_weight_and_product}
    If $\DD_\tnu$ appears with nonzero coefficient in the product $\DD_{\tpi_1}\DD_{\tpi_2}$, then \[\nbw(\tnu)\leq \nbw(\tpi_1)+\nbw(\tpi_2)\] with equality if and only if $N(\tnu)=N(\tpi_1)\uplus N(\tpi_2)$.
\end{lemma}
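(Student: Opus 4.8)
The plan is to reduce to the combinatorial picture underlying the diagram-like product and then run a vertex-counting (charging) argument, component by component. Recall (from the computation preceding Definition~\ref{def:snapshot}) that, writing $\pi$ and $\nu$ for fixed representatives of $\tpi_1$ and $\tpi_2$ with common middle multiplicity $\bfb$ (if no common $\bfb$ exists the product is $0$ and there is nothing to prove),
\[
\DD_{\tpi_1}\DD_{\tpi_2}=\frac{1}{\abs{\SG_\bfb}}\sum_{\sigma\in\SG_\bfb}n^{c(\pi,\sigma.\nu)}\,\DD_{\kappa_{\bfa,\bfc}(\gamma_\sigma)},
\]
where $\gamma_\sigma\in\Pi_{2(r)}$ is the top--bottom restriction of the three-tiered stacking of $\pi$ over $\sigma.\nu$. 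Since the coefficients $n^{c}$ do not cancel, $\DD_\tnu$ occurs with nonzero coefficient exactly when some snapshot $(\pi,\nu')$ satisfies $\kappa_{\bfa,\bfc}(\gamma)=\tnu$, where $\gamma$ is the restriction of the stacking of $\pi$ over $\nu'$. Fixing such a snapshot, renaming its bottom diagram $\nu$, and writing $\tgamma=\kappa_{\bfa,\bfc}(\gamma)=\tnu$, it suffices to prove for every snapshot that $\nbw(\tgamma)\le\nbw(\tpi_1)+\nbw(\tpi_2)$, with equality forcing $N(\tgamma)=N(\tpi_1)\uplus N(\tpi_2)$.

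Let $D$ be the three-tiered diagram on $T\sqcup M\sqcup B$ (top, middle, bottom). Its connected components $C$ partition $T\sqcup M\sqcup B$, the blocks of $\gamma$ are the nonempty sets $C\cap(T\cup B)$, and the blocks of $\tgamma$ are the $\tB_C:=\kappa_{\bfa,\bfc}(C\cap(T\cup B))$. As each vertex lies in exactly one $\pi$-block and one $\nu$-block, $\nbw(\tpi_1)$ equals the number of vertices of $T\cup M$ whose $\pi$-block recolors to a nonbasic block, $\nbw(\tpi_2)$ the number of vertices of $M\cup B$ whose $\nu$-block recolors to a nonbasic block, and $\nbw(\tgamma)=\sum_C\abs{C\cap(T\cup B)}$ summed over the $C$ with $\tB_C$ nonbasic. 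The heart of the argument is, for each such $C$, an injection $\phi_C$ from $C\cap(T\cup B)$ into the vertices of $C$ that lie in nonbasic $\pi$- or $\nu$-blocks: a top or bottom vertex already lying in a nonbasic block is sent to itself, while a top vertex $t$ lying in a vertical-bar $\pi$-block $\{t,m\}$ is sent to its middle partner $m$ (symmetrically on the bottom). The sub-lemma that makes this well defined is: if $\tB_C$ is nonbasic then the $\nu$-block containing such an $m$ is itself nonbasic. Indeed, if that block were a singleton or vertical bar, then --- using that the $\bfb$-coloring of the glued middle row is positional, hence read off identically from $\pi$ and from $\nu$ --- the component $C$ would collapse to a single vertex or to a ``through'' vertical bar whose two ends carry the same color, making $\tB_C$ basic. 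Injectivity of $\phi_C$ holds because distinct blocks of a set partition are disjoint. Summing the $\phi_C$ over components (each $\pi$- and $\nu$-block lying in a unique component) gives the inequality.

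For the equality statement, $N(\tnu)=N(\tpi_1)\uplus N(\tpi_2)$ yields equality of weights immediately from the definition of $\nbw$. Conversely, equality forces every $\phi_C$ to be a bijection. Examining surjectivity, one finds that for a component $C$ with $\tB_C$ nonbasic, each middle vertex of $C$ lies either in a nonbasic $\pi$-block together with a vertical-bar $\nu$-block, or in a vertical-bar $\pi$-block together with a nonbasic $\nu$-block (the remaining combinations at a middle vertex would force $\tB_C$ basic and are excluded). Since the non-middle endpoint of every vertical bar lies in no other block of $D$, such a $C$ is then built from a single nonbasic block $P$ --- say a $\pi$-block, the $\nu$-block case being symmetric --- with vertical bars dangling from the middle vertices of $P$; in particular $C$ contains exactly one nonbasic block, and tracing colors (again via the shared middle coloring) gives $\tB_C=\kappa_{\bfa,\bfb}(P)$, the block of $\tpi_1$ coming from $P$. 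Under the equality hypothesis, every component with $\tB_C$ basic contains no nonbasic block at all. Assembling these component descriptions shows $N(\tgamma)=N(\tpi_1)\uplus N(\tpi_2)$.

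I expect the equality direction to be the main obstacle --- extracting the star-shaped structure of the relevant components from bijectivity of the $\phi_C$, ruling out the degenerate configurations at middle vertices, and checking that the restriction reproduces each original nonbasic block together with its coloring. The inequality itself is a routine charging argument once the sub-lemma on middle partners is established.
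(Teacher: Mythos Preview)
Your proposal is correct and follows essentially the same approach as the paper: both construct the same injection from the nonbasic vertices of the product diagram into the nonbasic vertices of the two factors by sending a vertex either to itself (if already in a nonbasic block of the relevant factor) or along its vertical bar to the middle partner, and then analyze equality via bijectivity of this map. Your organization is per component of the three-tier diagram while the paper works globally with labeled vertices and a case analysis on pairs $\varphi(v),\varphi(w)$, but the content is the same; one small caution is that your parenthetical ``the remaining combinations at a middle vertex would force $\tB_C$ basic'' is only literally true for the all-basic combinations --- the mixed cases (singleton with nonbasic, or nonbasic with nonbasic) are excluded by the surjectivity you already invoked, not by forcing $\tB_C$ basic.
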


\begin{proof}
    Consider a snapshot $(\pi_1,\pi_2)$ in the product $\DD_{\tpi_1}\DD_{\tpi_2}$ and suppose $\LL_{\pi_1}\LL_{\pi_2}=n^c\LL_\nu$. Suppose $\tpi_1=\kappa_{\bfa,\bfb}(\pi_1)$ and $\tpi_2=\kappa_{\bfb,\bfc}(\pi_2)$. Write $\tnu=\kappa_{\bfa,\bfc}(\nu)$. Our goal is now to construct an injective map $\varphi$ from the set of nonbasic vertices of $\nu$ to the nonbasic vertices of $\pi_1$ and $\pi_2$. 
    
    Consider a nonbasic vertex $v$ of $\nu$ labeled $i$. There is a corresponding vertex $v'$ of $\pi_1$ also labeled $i$. If $v'$ is the only element in its block, the same is true for $v$. Hence, $v'$ must either be a nonbasic vertex or one end of a vertical bar. In the first case, set $\varphi(v)$ equal to $v'$, which is nonbasic. In the latter case, let $\ov j$ be the label of the other vertex in the vertical bar and let $\varphi(v)$ be the vertex of $\pi_2$ labeled $j$. If the diagram of $\pi_1$ were set atop that of $\pi_2$, this would be the vertex on the top of $\pi_2$ that the vertical bar lands on. If $v$ is labeled $\ov i$, the same process is followed swapping which elements are barred (see Figure \ref{fig:injective_map_on_nonbasics} for an illustration of this map).

    \begin{figure}[h]
        \centering
        $\begin{array}{cc}
    \begin{tikzpicture}[xscale=.7,yscale=.7,line width=1.4pt] 
        \foreach \i in {1,2,3,4,5,6}  { \path (\i,1.25) coordinate (T\i); \path (\i,.25) coordinate (B\i); \path (\i,-.35) coordinate (TP\i); \path (\i,-1.35) coordinate (BP\i); } 
        \filldraw[fill= black!12,draw=black!12,line width=4pt]  (T1) -- (T6) -- (B6) -- (B1) -- (T1);
        \filldraw[fill= black!12,draw=black!12,line width=4pt]  (TP1) -- (TP6) -- (BP6) -- (BP1) -- (TP1);
        \draw[black] (T1)--(B1)--(B2)--(T1);
        \draw[black] (T2)--(T3)--(B3)--(T2);
        \draw[black] (T4)--(T6)--(B6)--(T4);
        \draw[black] (TP1)--(TP2)--(BP1)--(TP1);
        \draw[black] (TP3)--(BP3)--(BP2)--(TP3);
        \draw[black] (TP4)--(TP5)--(BP5)--(BP4)--(TP4);
        \draw[black] (TP6)--(BP6);
        \foreach \l in {T2,T3,T4,T5,T6,B6,BP2,BP3,BP4,BP5} {\draw[c3] (\l) circle (7pt);}
        \foreach \l in {T1,T2,T3,B1,B2,TP1,TP2,BP1} {\filldraw[fill=white,draw=c1,line width = 1pt] (\l) circle (4pt);}
        \foreach \l in {T3,T4,T5,T6,B3,B4,B5,B6,TP3,TP4,TP5,TP6,BP2,BP3,BP4,BP5,BP6} {\filldraw[fill=white,draw=c2,line width = 1pt] (\l) circle (4pt);}
        \end{tikzpicture} & \raisebox{2.7em}{$\rightarrow$}\,\,\raisebox{1.6em}{\begin{tikzpicture}[xscale=.7,yscale=.7,line width=1.4pt] 
        \foreach \i in {1,2,3,4,5,6}  { \path (\i,1.25) coordinate (T\i); \path (\i,.25) coordinate (B\i);} 
        \filldraw[fill= black!12,draw=black!12,line width=4pt]  (T1) -- (T6) -- (B6) -- (B1) -- (T1);
        \draw[black] (T1)--(B1);
        \draw[black] (T2)--(T3)--(B3)--(B2)--(T2);
        \draw[black] (T4)--(T6)--(B6)--(T4);
        \draw[black] (B4)--(B5);
        \foreach \l in {T2,T3,T4,T5,T6,B2,B3,B4,B5,B6} {\draw[c3] (\l) circle (7pt);}
        \foreach \l in {T1,T2,B1} {\filldraw[fill=white,draw=c1,line width = 1pt] (\l) circle (4pt);}
        \foreach \l in {T3,T4,T5,T6,B2,B3,B4,B5,B6} {\filldraw[fill=white,draw=c2,line width = 1pt] (\l) circle (4pt);}
    \end{tikzpicture}}
\end{array}$
        \caption{Illustration of the injective map $\varphi$ constructed in Lemma \ref{lem:nonbasic_weight_and_product}. The nonbasic vertices on the right and the corresponding nonbasic vertices in the image of $\varphi$ on the left are highlighted in green.}
        \label{fig:injective_map_on_nonbasics}
    \end{figure}
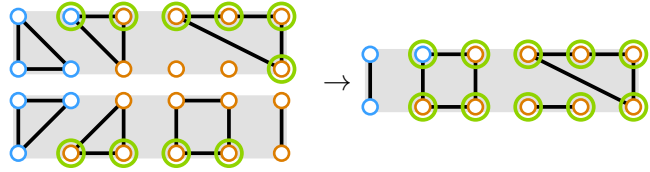

    In the latter case, if $\varphi(v)$ is basic, it is either the only element of its block (in which case, $v$ would be a singleton) or part of a vertical bar (in which case, $v$ is in a vertical bar). Either way, this contradicts the assumption that $\nu$ is nonbasic, so we have constructed a map $\varphi$ from the set of nonbasic vertices of $\nu$ to the nonbasic vertices of $\pi_1$ and $\pi_2$. It is clear that this map is injective, and so the number of nonbasic vertices of $\nu$ is less than the total number of nonbasic vertices of 
    $\pi_1$ and $\pi_2$, giving us the desired inequality.

    Now to investigate the case of equality we consider how the map $\varphi$ interacts with the set partition structure. Suppose that $\varphi(v)$ and $\varphi(w)$ are in the same block. Without loss of generality, assume they are in the same block of $\pi_1$. We then need to consider the following cases: (see Figure \ref{fig:generator_lemma_cases}(i)-(iii) for illustrations of these cases)

    \begin{enumerate}
        \item[(i)] $\varphi(v)$ and $\varphi(w)$ are both on the top of the block.

        The vertex $\varphi(v)$ has the same label as $v$ and $\varphi(w)$ has the same label as $w$. Because the vertices with these labels are connected, the vertices with the same labels must be connected in the product, so $v$ and $w$ are in the same block.

        \item[(ii)] $\varphi(v)$ and $\varphi(w)$ are both on the bottom of the block.

        The vertices $\varphi(v)$ and $\varphi(w)$ each meet a vertical bar whose other end is labeled the same as $v$ and $w$ respectively. Hence, $v$ and $w$ are joined in the product.

        \item[(iii)] Without loss of generality $\varphi(v)$ is on the top of the block and $\varphi(w)$ is on the bottom.

        The vertex $\varphi(v)$ is labeled the same as $v$ and the vertex on the other end of the vertical bar meeting $\varphi(w)$ is labeled the same as $w$. Hence $v$ and $w$ are again joined in the product.
    \end{enumerate}

    Hence, if $\varphi(v)$ and $\varphi(w)$ are in the same block, then $u$ and $v$ are in the same block.

    Suppose that $v$ and $w$ are in the same block but $\varphi(v)$ and $\varphi(w)$ are not (see Figure \ref{fig:generator_lemma_cases}(iv)-(v)). Then two nonbasic blocks in the product must have been combined, and any vertex where the two nonbasic blocks meet must not be in the image of $\varphi$, meaning $\varphi$ is not a surjection in this case.

    When equality holds, the map $\varphi$ is a bijection and $\varphi(v)$ in the same block as $\varphi(w)$ if and only if $v$ and $w$ are in the same block. The map $\varphi$ then induces a bijection of nonbasic \emph{blocks}, hence $N(\tnu)=N(\tpi_1)\uplus N(\tpi_2)$.
\end{proof}

\begin{figure}
    \centering
    \begin{tabular}{cc}
        (i) $\begin{array}{cc}
    \begin{tikzpicture}[xscale=.5,yscale=.5,line width=1.25pt] 
        \foreach \i in {1,2,3}  { \path (\i,1.25) coordinate (T\i); \path (\i,.25) coordinate (B\i); \path (\i,-.35) coordinate (TP\i); \path (\i,-1.35) coordinate (BP\i); } 
        \filldraw[fill= black!12,draw=black!12,line width=4pt]  (T1) -- (T3) -- (B3) -- (B1) -- (T1);
        \filldraw[fill= black!12,draw=black!12,line width=4pt]  (TP1) -- (TP3) -- (BP3) -- (BP1) -- (TP1);
        \draw[black] (T1)--(T3)--(B2)--(B1)--(T1);
        \draw[black] (TP1)--(TP2)--(BP1)--(TP1);
        \draw[black] (TP3)--(BP3)--(BP2)--(TP3);
        \draw[c3] (T1) circle (7pt);
        \draw[c3] (T2) circle (7pt);
        \foreach \l in {T1,T2,B1,B2,B3,TP1,TP2,TP3,BP1,BP2,BP3} {\filldraw[fill=white,draw=c1,line width = 1pt] (\l) circle (4pt);}
        \foreach \l in {T3} {\filldraw[fill=white,draw=c2,line width = 1pt] (\l) circle (4pt);}
        \end{tikzpicture} & \raisebox{1.5em}{$\rightarrow$}\,\,\raisebox{1em}{\begin{tikzpicture}[xscale=.5,yscale=.5,line width=1.25pt] 
        \foreach \i in {1,2,3}  { \path (\i,1.25) coordinate (T\i); \path (\i,.25) coordinate (B\i);} 
        \filldraw[fill= black!12,draw=black!12,line width=4pt]  (T1) -- (T3) -- (B3) -- (B1) -- (T1);
        \draw[black] (T1)--(T3)--(B1)--(T1);
        \draw[black] (B2)--(B3);
        \draw[c3] (T1) circle (7pt);
        \draw[c3] (T2) circle (7pt);
        \foreach \l in {T1,T2,B1,B2,B3} {\filldraw[fill=white,draw=c1,line width = 1pt] (\l) circle (4pt);}
        \foreach \l in {T3} {\filldraw[fill=white,draw=c2,line width = 1pt] (\l) circle (4pt);}
    \end{tikzpicture}}
\end{array}$
    & (ii) $\begin{array}{cc}
    \begin{tikzpicture}[xscale=.5,yscale=.5,line width=1.25pt] 
        \foreach \i in {1,2,3}  { \path (\i,1.25) coordinate (T\i); \path (\i,.25) coordinate (B\i); \path (\i,-.35) coordinate (TP\i); \path (\i,-1.35) coordinate (BP\i); } 
        \filldraw[fill= black!12,draw=black!12,line width=4pt]  (T1) -- (T3) -- (B3) -- (B1) -- (T1);
        \filldraw[fill= black!12,draw=black!12,line width=4pt]  (TP1) -- (TP3) -- (BP3) -- (BP1) -- (TP1);
        \draw[black] (T1)--(T2)--(B2)--(B1)--(T1);
        \draw[black] (T3)--(B3);
        \draw[black] (TP1)--(BP1);
        \draw[black] (TP2)--(BP2);
        \draw[black] (TP3)--(BP3);
        \draw[c3] (B1) circle (7pt);
        \draw[c3] (B2) circle (7pt);
        \foreach \l in {T1,T2,B1,B2,B3,TP1,TP2,TP3,BP1,BP2,BP3} {\filldraw[fill=white,draw=c1,line width = 1pt] (\l) circle (4pt);}
        \foreach \l in {T3} {\filldraw[fill=white,draw=c2,line width = 1pt] (\l) circle (4pt);}
        \end{tikzpicture} & \raisebox{1.5em}{$\rightarrow$}\,\,\raisebox{1em}{\begin{tikzpicture}[xscale=.5,yscale=.5,line width=1.25pt] 
        \foreach \i in {1,2,3}  { \path (\i,1.25) coordinate (T\i); \path (\i,.25) coordinate (B\i);} 
        \filldraw[fill= black!12,draw=black!12,line width=4pt]  (T1) -- (T3) -- (B3) -- (B1) -- (T1);
        \draw[black] (T1)--(T2)--(B2)--(B1)--(T1);
        \draw[black] (T3)--(B3);
        \draw[c3] (B1) circle (7pt);
        \draw[c3] (B2) circle (7pt);
        \foreach \l in {T1,T2,B1,B2,B3} {\filldraw[fill=white,draw=c1,line width = 1pt] (\l) circle (4pt);}
        \foreach \l in {T3} {\filldraw[fill=white,draw=c2,line width = 1pt] (\l) circle (4pt);}
    \end{tikzpicture}}
\end{array}$ \\
        (iii) $\begin{array}{cc}
    \begin{tikzpicture}[xscale=.5,yscale=.5,line width=1.25pt] 
        \foreach \i in {1,2,3}  { \path (\i,1.25) coordinate (T\i); \path (\i,.25) coordinate (B\i); \path (\i,-.35) coordinate (TP\i); \path (\i,-1.35) coordinate (BP\i); } 
        \filldraw[fill= black!12,draw=black!12,line width=4pt]  (T1) -- (T3) -- (B3) -- (B1) -- (T1);
        \filldraw[fill= black!12,draw=black!12,line width=4pt]  (TP1) -- (TP3) -- (BP3) -- (BP1) -- (TP1);
        \draw[black] (T1)--(T2)--(B2)--(B1)--(T1);
        \draw[black] (T3)--(B3);
        \draw[black] (TP1)--(BP1);
        \draw[black] (TP2)--(TP3)--(BP3)--(TP2);
        \draw[c3] (T1) circle (7pt);
        \draw[c3] (B1) circle (7pt);
        \foreach \l in {T1,T2,B1,B2,B3,TP1,TP2,TP3,BP1,BP2,BP3} {\filldraw[fill=white,draw=c1,line width = 1pt] (\l) circle (4pt);}
        \foreach \l in {T3} {\filldraw[fill=white,draw=c2,line width = 1pt] (\l) circle (4pt);}
        \end{tikzpicture} & \raisebox{1.5em}{$\rightarrow$}\,\,\raisebox{1em}{\begin{tikzpicture}[xscale=.5,yscale=.5,line width=1.25pt] 
        \foreach \i in {1,2,3}  { \path (\i,1.25) coordinate (T\i); \path (\i,.25) coordinate (B\i);} 
        \filldraw[fill= black!12,draw=black!12,line width=4pt]  (T1) -- (T3) -- (B3) -- (B1) -- (T1);
        \draw[black] (T1)--(T3)--(B3) .. controls +(0,+.35) and +(0,+.35) .. (B1)--(T1);
        \draw[c3] (T1) circle (7pt);
        \draw[c3] (B1) circle (7pt);
        \foreach \l in {T1,T2,B1,B2,B3} {\filldraw[fill=white,draw=c1,line width = 1pt] (\l) circle (4pt);}
        \foreach \l in {T3} {\filldraw[fill=white,draw=c2,line width = 1pt] (\l) circle (4pt);}
    \end{tikzpicture}}
\end{array}$
& (iv) $\begin{array}{cc}
    \begin{tikzpicture}[xscale=.5,yscale=.5,line width=1.25pt] 
        \foreach \i in {1,2,3}  { \path (\i,1.25) coordinate (T\i); \path (\i,.25) coordinate (B\i); \path (\i,-.35) coordinate (TP\i); \path (\i,-1.35) coordinate (BP\i); } 
        \filldraw[fill= black!12,draw=black!12,line width=4pt]  (T1) -- (T3) -- (B3) -- (B1) -- (T1);
        \filldraw[fill= black!12,draw=black!12,line width=4pt]  (TP1) -- (TP3) -- (BP3) -- (BP1) -- (TP1);
        \draw[black] (T1)--(B1);
        \draw[black] (T2)--(T3)--(B2)--(T2);
        \draw[black] (TP1)--(TP2)--(BP2)--(BP1)--(TP1);
        \draw[black] (TP3)--(BP3);
        \draw[c4] (TP1) circle (7pt);
        \draw[c4] (T2) circle (7pt);
        \foreach \l in {T1,T2,B1,B2,B3,TP1,TP2,TP3,BP1,BP2,BP3} {\filldraw[fill=white,draw=c1,line width = 1pt] (\l) circle (4pt);}
        \foreach \l in {T3} {\filldraw[fill=white,draw=c2,line width = 1pt] (\l) circle (4pt);}
        \end{tikzpicture} & \raisebox{1.5em}{$\rightarrow$}\,\,\raisebox{1em}{\begin{tikzpicture}[xscale=.5,yscale=.5,line width=1.25pt] 
        \foreach \i in {1,2,3}  { \path (\i,1.25) coordinate (T\i); \path (\i,.25) coordinate (B\i);} 
        \filldraw[fill= black!12,draw=black!12,line width=4pt]  (T1) -- (T3) -- (B3) -- (B1) -- (T1);
        \draw[black] (T1)--(T3)--(B2)--(B1)--(T1);
        \draw[c4] (T1) circle (7pt);
        \draw[c4] (T2) circle (7pt);
        \foreach \l in {T1,T2,B1,B2,B3} {\filldraw[fill=white,draw=c1,line width = 1pt] (\l) circle (4pt);}
        \foreach \l in {T3} {\filldraw[fill=white,draw=c2,line width = 1pt] (\l) circle (4pt);}
    \end{tikzpicture}}
\end{array}$\\
        (v) $\begin{array}{cc}
    \begin{tikzpicture}[xscale=.5,yscale=.5,line width=1.25pt] 
        \foreach \i in {1,2,3}  { \path (\i,1.25) coordinate (T\i); \path (\i,.25) coordinate (B\i); \path (\i,-.35) coordinate (TP\i); \path (\i,-1.35) coordinate (BP\i); } 
        \filldraw[fill= black!12,draw=black!12,line width=4pt]  (T1) -- (T3) -- (B3) -- (B1) -- (T1);
        \filldraw[fill= black!12,draw=black!12,line width=4pt]  (TP1) -- (TP3) -- (BP3) -- (BP1) -- (TP1);
        \draw[black] (T1)--(T2)--(B1)--(T1);
        \draw[black] (T3)--(B3)--(B2)--(T3);
        \draw[black] (TP1)--(TP2);
        \draw[black] (TP3)--(BP3);
        \draw[c4] (T1) circle (7pt);
        \draw[c4] (T3) circle (7pt);
        \foreach \l in {T1,T2,B1,B2,B3,TP1,TP2,TP3,BP1,BP2,BP3} {\filldraw[fill=white,draw=c1,line width = 1pt] (\l) circle (4pt);}
        \foreach \l in {T3} {\filldraw[fill=white,draw=c2,line width = 1pt] (\l) circle (4pt);}
        \end{tikzpicture} & \raisebox{1.5em}{$\rightarrow$}\,\,\raisebox{1em}{\begin{tikzpicture}[xscale=.5,yscale=.5,line width=1.25pt] 
        \foreach \i in {1,2,3}  { \path (\i,1.25) coordinate (T\i); \path (\i,.25) coordinate (B\i);} 
        \filldraw[fill= black!12,draw=black!12,line width=4pt]  (T1) -- (T3) -- (B3) -- (B1) -- (T1);
        \draw[black] (T1)--(T3)--(B3)--(T1);
        \draw[c4] (T1) circle (7pt);
        \draw[c4] (T3) circle (7pt);
        \foreach \l in {T1,T2,B1,B2,B3} {\filldraw[fill=white,draw=c1,line width = 1pt] (\l) circle (4pt);}
        \foreach \l in {T3} {\filldraw[fill=white,draw=c2,line width = 1pt] (\l) circle (4pt);}
    \end{tikzpicture}}
\end{array}$ &
    \end{tabular}
    \caption{Illustrations of cases when $\varphi(u)$ and $\varphi(w)$ are in the same block.}
    \label{fig:generator_lemma_cases}
\end{figure}
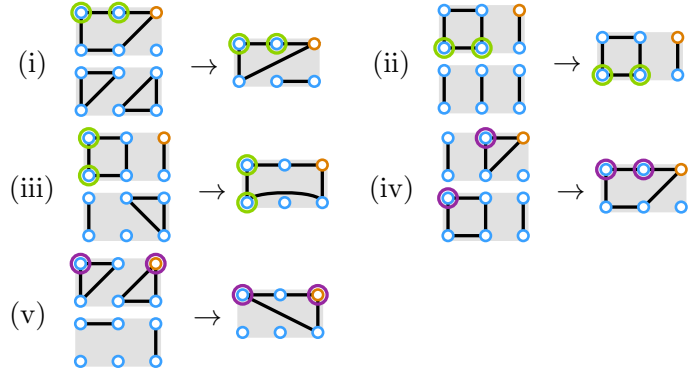

We will introduce a sort of factorization of a diagram $\tpi$ with a nonbasic block $\tB$ into diagrams with fewer nonbasic blocks, and to that end we define two diagrams $\tpi/\tB$ and $\tpi|_{\tB}$. Informally, the diagram $\tpi/\tB$ is the result of removing the block $\tB$ and replacing it with basic blocks, and the diagram $\tpi|_{\tB}$ is a diagram whose only nonbasic block is $\tB$. 

\begin{exam}\label{ex:factored_diagrams} Here we show how the diagram $\tpi$ can be factored at the nonbasic block $\tB$.
    \begin{align*}
        \tpi&=\raisebox{.75em}{$\begin{array}{c}
                \begin{tikzpicture}[xscale=.5,yscale=.5,line width=1.25pt] 
                    \foreach \i in {1,...,6}  { \path (\i,1.25) coordinate (T\i); \path (\i,.25) coordinate (B\i);  } 
                    \filldraw[fill= black!12,draw=black!12,line width=4pt]  (T1) -- (T6) -- (B6) -- (B1) -- (T1);
                    \draw[black] (T1)--(B1);
                    \draw[black] (T2)--(T5)--(B3)--(B2)--(T2);
                    \draw[black] (T6)--(B6)--(B5)--(T6);
                    \colortop{1,2}{c1}
                    \colortop{3,4}{c2}
                    \colortop{5,6}{c3}
                    \colorbot{1,2,3}{c1}
                    \colorbot{4,5,6}{c2}
                    \draw [
                thick,
                decoration={
                    brace,
                    raise=0.2cm
                },
                decorate
            ] (T2) -- (T5) 
            node  [pos=0.5,anchor=north,yshift=1cm] {$\tB$}; 
                \end{tikzpicture}
            \end{array}$}\\
            \tpi|_{\tB}&=\inlinediagram{6}{
            \draw[black] (T1)--(B1);
            \draw[black] (T2)--(T5)--(B3)--(B2)--(T2);
            \draw[black] (T6)--(B6);
            \colortop{1,2}{c1}
            \colortop{3,4}{c2}
            \colortop{5,6}{c3}
            \colorbot{1,2,4,5}{c1}
            \colorbot{3}{c2}
            \colorbot{6}{c3}}\\
            \tpi/\tB&=\inlinediagram{6}{
            \draw[black] (T1)--(B1);
            \draw[black] (T2)--(B2);
            \draw[black] (T3)--(B3);
            \draw[black] (T6)--(B6)--(B5)--(T6);
            \colortop{1,2,4,5}{c1}
            \colortop{3}{c2}
            \colortop{6}{c3}
            \colorbot{1,2}{c1}
            \colorbot{3,4,5,6}{c2}}
    \end{align*}
    Note that in $\tpi|_{\tB}$, the block $\tB$ is joined by vertical bars matching the remaining vertices atop $\tpi$ as well as two singletons $\multi{\ov1}$ at the bottom so that the number of vertices on top and on bottom match. In $\tpi/\tB$, the block $\tB$ is removed from $\tpi$, leaving behind a vertical bar matching each vertex at the bottom of $\tB$ as well as two singletons $\multi1$ so that the number of vertices on top and on bottom again match.
\end{exam}

We now define this factorization more precisely.

\begin{defi}
    Let $\tpi\in\tilde{\Pi}_{2r,k}$ have a nonbasic block $\tB$. Without loss of generality, assume $\tB$ has more unbarred entries than barred entries. Write $\tpi/\tB$ for the multiset partition obtained by replacing $\tB$ with vertical bars $\multi{i,\ov i}$ for each barred entry $\ov i$ of $\tB$ and a number of singletons $\multi{\ov 1}$ making up the difference in the number of barred and unbarred entries in $\tB$. Write $\tpi|_{\tB}$ for the multiset partition consisting of $\tB$, a vertical bar $\multi{i,\ov i}$ for each vertex labeled $i$ in $\tpi$ not in $\tB$, and enough $\multi{\ov1}$ to make it an element of $\tilde\Pi_{2(r),k}$.
\end{defi}

 We see immediately in Example \ref{ex:factored_diagrams} that although the element $\DD_\tpi$ will appear with nonzero coefficient in $\DD_{\tpi|_{\tB}}\DD_{\tpi/\tB}$, many diagrams other than $\DD_\tpi$ also appear. To systematically account for these extra diagrams, we now define a partial order in which they are smaller than the desired diagram $\DD_\tpi$. This will allow us to use the factorization recursively to write $\DD_\tpi$ as a polynomial in simpler diagrams.

 Write $\vb(\tpi)$ for the number of vertical bars in $\tpi$. Define a partial order on $\tilde{\Pi}_{r,k}$ by saying that $\tpi\prec\ttau$ if either \begin{align*}
    \nbw(\tpi)&<\nbw(\ttau)\\
    &\text{or}\\
    \nbw(\tpi)=\nbw(\ttau)&\text{ and }\vb(\tpi)<\vb(\ttau).
\end{align*}

\begin{lemma}\label{lem:factorize}
    Let $\tpi$ be a multiset partition and $\tB$ a nonbasic block of $\tpi$ with more unbarred entries than barred entries. Then there is a constant $c\in\C$ so that \[c\DD_{\tpi|_{\tB}}\DD_{\tpi/\tB}-\DD_\tpi\in\Span_\C\{\DD_\tnu:\tnu\prec\tpi\}.\]
\end{lemma}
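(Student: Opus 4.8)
The plan is to expand $\DD_{\tpi|_{\tB}}\DD_{\tpi/\tB}$ with the diagram-like product formula, to locate a single copy of $\DD_\tpi$ inside it, and to show every remaining term is $\prec\tpi$. First I would record that $\nbw(\tpi|_{\tB})=\abs{\tB}$, since $\tB$ is the only nonbasic block of $\tpi|_{\tB}$ (every other block being a vertical bar or a singleton), and that $\nbw(\tpi/\tB)=\nbw(\tpi)-\abs{\tB}$, since deleting $\tB$ and inserting vertical bars and singletons leaves the remaining nonbasic blocks untouched. A short check of content multiplicities shows that $\tpi|_{\tB}$ and $\tpi/\tB$ are composable, so their product is a genuine nonzero $\C$-combination of diagram-like basis elements. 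By Lemma \ref{lem:nonbasic_weight_and_product}, every $\DD_\tnu$ occurring there has $\nbw(\tnu)\le\nbw(\tpi|_{\tB})+\nbw(\tpi/\tB)=\nbw(\tpi)$; the terms with strict inequality already lie in $\Span_\C\{\DD_\tnu:\tnu\prec\tpi\}$, and the same lemma identifies the terms with $\nbw(\tnu)=\nbw(\tpi)$ as precisely those with $N(\tnu)=N(\tpi|_{\tB})\uplus N(\tpi/\tB)=N(\tpi)$.

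Next I would produce the term $\DD_\tpi$ itself. Writing the product as $\tfrac1{\abs{\SG_\bfb}}\sum_{\sigma\in\SG_\bfb}s_\bfa\LL_{\pi_1}\LL_{\sigma.\pi_2}s_\bfc$ for fixed representatives $\pi_1$ of $\tpi|_{\tB}$ and $\pi_2$ of $\tpi/\tB$, there is a distinguished snapshot that rebuilds $\tpi$: identify each barred vertex of $\tB$ in $\pi_1$ with the top of the vertical bar of $\tpi/\tB$ inserted for that entry (reattaching $\tB$ to its original barred entries), identify the bottom of each remaining vertical bar of $\pi_1$ with the top vertex of the surviving $\tpi$-block containing its index, and identify the padding $\multi{\ov1}$'s of $\pi_1$ with the padding $\multi1$'s of $\tpi/\tB$ (creating purely middle components, hence a power of $n$). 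That summand equals $n^{c_0}\DD_\tpi$ for some $c_0\ge0$, and since each $\LL_{\pi_1}\LL_{\sigma.\pi_2}$ equals $n^{c(\sigma)}\LL_{\gamma(\sigma)}$ with no sign, the coefficient of $\DD_\tpi$ in the product is a sum of positive reals, hence nonzero. Letting $c$ be its reciprocal gives $c\DD_{\tpi|_{\tB}}\DD_{\tpi/\tB}=\DD_\tpi+\sum_{\tnu\ne\tpi}d_\tnu\DD_\tnu$, and it remains to check that each $\tnu$ with $d_\tnu\ne0$ is $\prec\tpi$.

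The real work, and the step I expect to be the main obstacle, is showing that every $\DD_\tnu$ in the product with $N(\tnu)=N(\tpi)$ and $\tnu\ne\tpi$ satisfies $\vb(\tnu)<\vb(\tpi)$. Since such a $\tnu$ shares its content and its multiset of nonbasic blocks with $\tpi$, the number of its vertices lying in basic blocks is fixed, so $\vb(\tnu)<\vb(\tpi)$ is equivalent to $\tnu$ having strictly more singletons than $\tpi$. I would argue this by refining the injective map $\varphi$ from the proof of Lemma \ref{lem:nonbasic_weight_and_product}: when $N(\tnu)=N(\tpi)$ the map $\varphi$ is a block-preserving bijection of nonbasic vertices, so in the three-tier diagram of the relevant snapshot the nonbasic blocks reassemble exactly as in $\tpi$. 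Because $\tB$ is nonbasic with at least two unbarred entries (this is where the hypothesis that $\tB$ has strictly more unbarred than barred entries enters), the top vertex of any vertical bar of $\tnu$ must be the top of a vertical bar of $\pi_1$; tracing that component downward through $\pi_1$, across the middle layer, and through $\pi_2$, and using that $\tB$ must reconstruct to exactly $\tB$ --- which pins down, up to colour-preserving reordering, how $\tB$'s barred vertices pair with the vertical bars of $\tpi/\tB$ --- forces the bottom of that $\pi_1$-vertical bar to plug into precisely the slot occupied by its index in $\tpi$. A snapshot not of this type either merges or truncates a nonbasic block (excluded, since $N(\tnu)=N(\tpi)$) or else routes one of the relevant $\pi_2$-vertical-bar tops onto a padding $\multi{\ov1}$ of $\pi_1$, converting a vertical bar into a singleton; hence $\vb(\tnu)\le\vb(\tpi)$, with equality only when every vertical bar is assembled in the standard way, in which case $\tnu=\tpi$. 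The delicate part is the bookkeeping that rules out a "diverted" vertical bar of $\tpi/\tB$ being exactly offset by a fresh vertical bar appearing elsewhere; I expect this to follow from the rigidity of the $\varphi$-bijection on nonbasic vertices together with the fact that all of the vertical bars $\tpi/\tB$ introduces are consumed in rebuilding $\tB$. Combining the three steps yields $c\DD_{\tpi|_{\tB}}\DD_{\tpi/\tB}-\DD_\tpi\in\Span_\C\{\DD_\tnu:\tnu\prec\tpi\}$.
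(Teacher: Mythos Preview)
Your proposal is correct and follows the same overall strategy as the paper: exhibit a snapshot contributing $\DD_\tpi$ (with a positive, hence nonzero, coefficient), invoke Lemma~\ref{lem:nonbasic_weight_and_product} to bound $\nbw(\tnu)\le\nbw(\tpi)$ and pin down the equality case as $N(\tnu)=N(\tpi)$, and then argue that any remaining $\tnu\ne\tpi$ with $N(\tnu)=N(\tpi)$ must have strictly fewer vertical bars.

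The main difference is in how the final step is handled. The paper dispatches it in two sentences: if a vertical bar of $\tnu$ arose from nonbasic blocks of $\tpi|_{\tB}$ and $\tpi/\tB$ meeting, then $\nbw(\tnu)$ would necessarily drop; hence in the equality case either $\tnu=\tpi$ or $\tnu$ has fewer vertical bars. There is no detailed trace through the $\varphi$-bijection, no case analysis of where each $\pi_1$-vertical bar lands, and no explicit discussion of the ``diverted vertical bar'' scenario you worry about---the paper simply asserts the dichotomy and moves on. Your version is a more explicit unpacking of exactly that assertion; the ``delicate bookkeeping'' you flag is essentially what the paper leaves to the reader. In particular, your observation that $\tB$ must have at least two unbarred entries (so the top of any $\tnu$-vertical bar cannot sit in $\tB$), and your reduction of the equality case to counting singletons, are both correct and make the paper's implicit reasoning visible.
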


\begin{proof}

    Consider a snapshot in the product $\DD_{\tpi|_{\tB}}\DD_{\tpi/\tB}$ in which each vertex at the bottom of the block $\tB$ in $\tpi|_{\tB}$ meets a vertical bar in $\tpi/\tB$ and each singleton $\multi{\ov1}$ in $\tpi|_{\tB}$ created in the factorization meets a singleton $\multi{1}$ in $\tpi/\tB$ created during the factorization. Such a snapshot exists because there is a vertical bar in $\tpi|_{\tB}$ matching each vertex at the bottom of $\tB$ and the number of singletons added to each factor is precisely the difference between the number of vertices in the top and in the bottom of $\tB$. The resulting diagram from this snapshot is $\tpi$, and so $\DD_\tpi$ appears with nonzero coefficient in the product. Let $c$ be the reciprocal of the coefficient it appears with.

    By Lemma \ref{lem:nonbasic_weight_and_product} any $\DD_\tnu$ appearing in the product must have $\nbw(\tnu)\leq\nbw(\tpi|_{\tB})+\nbw(\tpi/\tB)=\nbw(\tpi)$ with equality only if \[N(\tnu)=N(\tpi|_{\tB})\biguplus N(\tpi/\tB)=N(\tpi).\]

    If any vertical bars in $\tnu$ came from nonbasic blocks of $\tpi|_{\tB}$ and $\tpi/\tB$ meeting, then $\tnu$ would necessarily have a smaller nonbasic weight. Hence, in the case that $\nbw(\tnu)=\nbw(\tpi)$, it must either be the case that $\tnu=\tpi$ or $\tnu$ has fewer vertical bars. So, every $\tnu\neq\tpi$ that appears in the product is smaller in $(\tilde\Pi_{2(r),k},\preceq)$.
\end{proof}

\begin{exam} \label{ex:factorize} Lemma \ref{lem:factorize} can be used recursively to write a diagram as a polynomial in diagrams with a single nonbasic block. At each step, the nonbasic block $\tB$ that the diagram will next be factored at is highlighted. Notice that example $(ii)$ ends where example $(i)$ begins.
\begin{align*}
    (i) \begin{array}{c}
        \begin{tikzpicture}[xscale=.4,yscale=.4,line width=1.25pt] 
                \foreach \i in {1,...,4}  { \path (\i,1.25) coordinate (T\i); \path (\i,.25) coordinate (B\i);  } 
                \filldraw[fill= black!12,draw=black!12,line width=4pt]  (T1) -- (T4) -- (B4) -- (B1) -- (T1);
                \draw[black] (T2)--(T3);
                \draw[black] (T4)--(B4);
                \draw[black] (B1)--(B3);
                \draw [draw=none,fill=yellow, opacity=0.5] (1.75,1.45) -- (3.25,1.45) -- (3.25,1.05) -- (1.75,1.05)-- cycle;
                \colortop{1,2}{c1}
                \colortop{3,4}{c2}
                \colorbot{1,2}{c1}
                \colorbot{3}{c2}
                \colorbot{4}{c3}
            \end{tikzpicture}
        \end{array}&=\frac1{n^2}\left(\begin{array}{c}
            \begin{tikzpicture}[xscale=.4,yscale=.4,line width=1.25pt] 
                \foreach \i in {1,...,4}  { \path (\i,1.25) coordinate (T\i); \path (\i,.25) coordinate (B\i);  } 
                \filldraw[fill= black!12,draw=black!12,line width=4pt]  (T1) -- (T4) -- (B4) -- (B1) -- (T1);
                \draw[black] (T1)--(B1);
                \draw[black] (T2)--(T3);
                \draw[black] (T4)--(B4);
                \colortop{1,2}{c1}
                \colortop{3,4}{c2}
                \colorbot{1,2,3}{c1}
                \colorbot{4}{c2}
            \end{tikzpicture}
        \end{array}\right)\left(\begin{array}{c}
            \begin{tikzpicture}[xscale=.4,yscale=.4,line width=1.25pt] 
                \foreach \i in {1,...,4}  { \path (\i,1.25) coordinate (T\i); \path (\i,.25) coordinate (B\i);  } 
                \filldraw[fill= black!12,draw=black!12,line width=4pt]  (T1) -- (T4) -- (B4) -- (B1) -- (T1);
                \draw[black] (T4)--(B4);
                \draw[black] (B1)--(B3);
                \draw [draw=none,fill=yellow, opacity=0.5] (3.75,1.45) -- (4.25,1.45) -- (4.25,.05) -- (3.75,.05)-- cycle;
                \colortop{1,2,3}{c1}
                \colortop{4}{c2}
                \colorbot{1,2}{c1}
                \colorbot{3}{c2}
                \colorbot{4}{c3}
            \end{tikzpicture}
        \end{array}\right)\\
        &=\frac1{n^2}\left(\begin{array}{c}
            \begin{tikzpicture}[xscale=.4,yscale=.4,line width=1.25pt] 
                \foreach \i in {1,...,4}  { \path (\i,1.25) coordinate (T\i); \path (\i,.25) coordinate (B\i);  } 
                \filldraw[fill= black!12,draw=black!12,line width=4pt]  (T1) -- (T4) -- (B4) -- (B1) -- (T1);
                \draw[black] (T1)--(B1);
                \draw[black] (T2)--(T3);
                \draw[black] (T4)--(B4);
                \colortop{1,2}{c1}
                \colortop{3,4}{c2}
                \colorbot{1,2,3}{c1}
                \colorbot{4}{c2}
            \end{tikzpicture}
        \end{array}\right)\left(\begin{array}{c}
            \begin{tikzpicture}[xscale=.4,yscale=.4,line width=1.25pt] 
                \foreach \i in {1,...,4}  { \path (\i,1.25) coordinate (T\i); \path (\i,.25) coordinate (B\i);  } 
                \filldraw[fill= black!12,draw=black!12,line width=4pt]  (T1) -- (T4) -- (B4) -- (B1) -- (T1);
                \draw[black] (T1)--(B1);
                \draw[black] (T2)--(B2);
                \draw[black] (T3)--(B3);
                \draw[black] (T4)--(B4);
                \colortop{1,2,3}{c1}
                \colortop{4}{c2}
                \colorbot{1,2,3}{c1}
                \colorbot{4}{c3}
            \end{tikzpicture}
        \end{array}\right)\left(\begin{array}{c}
            \begin{tikzpicture}[xscale=.4,yscale=.4,line width=1.25pt] 
                \foreach \i in {1,...,4}  { \path (\i,1.25) coordinate (T\i); \path (\i,.25) coordinate (B\i);  } 
                \filldraw[fill= black!12,draw=black!12,line width=4pt]  (T1) -- (T4) -- (B4) -- (B1) -- (T1);
                \draw[black] (T4)--(B4);
                \draw[black] (B1)--(B3);
                \colortop{1,2,3}{c1}
                \colortop{4}{c3}
                \colorbot{1,2}{c1}
                \colorbot{3}{c2}
                \colorbot{4}{c3}
            \end{tikzpicture}
        \end{array}\right)\\
        (ii)\begin{array}{c}
            \begin{tikzpicture}[xscale=.4,yscale=.4,line width=1.25pt] 
                \foreach \i in {1,...,4}  { \path (\i,1.25) coordinate (T\i); \path (\i,.25) coordinate (B\i);  } 
                \filldraw[fill= black!12,draw=black!12,line width=4pt]  (T1) -- (T4) -- (B4) -- (B1) -- (T1);
                \draw[black] (T4)--(B4);
                \draw[black] (T2)--(T3);
                \draw[black] (T1)--(B1)--(B3)--(T1);
                \draw [draw=none,fill=yellow, opacity=0.5] (.75,1.55) -- (3.5,.05) -- (.75,.05) -- cycle;
                \colortop{1,2}{c1}
                \colortop{3,4}{c2}
                \colorbot{1,2}{c1}
                \colorbot{3}{c2}
                \colorbot{4}{c3}
            \end{tikzpicture}
        \end{array}&=\frac{3}{n^2}\left(\begin{array}{c}
            \begin{tikzpicture}[xscale=.4,yscale=.4,line width=1.25pt] 
                \foreach \i in {1,...,4}  { \path (\i,1.25) coordinate (T\i); \path (\i,.25) coordinate (B\i);  } 
                \filldraw[fill= black!12,draw=black!12,line width=4pt]  (T1) -- (T4) -- (B4) -- (B1) -- (T1);
                \draw [draw=none,fill=yellow, opacity=0.5] (3.75,1.45) -- (4.25,1.45) -- (4.25,.05) -- (3.75,.05)-- cycle;
                \draw[black] (T1)--(B1);
                \draw[black] (T4)--(B4);
                \draw[black] (T2)--(T3);
                \colortop{1,2}{c1}
                \colortop{3,4}{c2}
                \colorbot{1,2,3}{c1}
                \colorbot{4}{c3}
            \end{tikzpicture}
        \end{array}\right)\left(\begin{array}{c}
            \begin{tikzpicture}[xscale=.4,yscale=.4,line width=1.25pt] 
                \foreach \i in {1,...,4}  { \path (\i,1.25) coordinate (T\i); \path (\i,.25) coordinate (B\i);  } 
                \filldraw[fill= black!12,draw=black!12,line width=4pt]  (T1) -- (T4) -- (B4) -- (B1) -- (T1);
                \draw[black] (T4)--(B4);
                \draw[black] (T1)--(B1)--(B3)--(T1);
                \colortop{1,2,3}{c1}
                \colortop{4}{c3}
                \colorbot{1,2,3}{c1}
                \colorbot{3}{c2}
                \colorbot{4}{c3}
            \end{tikzpicture}
        \end{array}\right)-\frac2n\left(\begin{array}{c}
            \begin{tikzpicture}[xscale=.4,yscale=.4,line width=1.25pt] 
                \foreach \i in {1,...,4}  { \path (\i,1.25) coordinate (T\i); \path (\i,.25) coordinate (B\i);  } 
                \filldraw[fill= black!12,draw=black!12,line width=4pt]  (T1) -- (T4) -- (B4) -- (B1) -- (T1);
                \draw[black] (T4)--(B4);
                \draw[black] (T2)--(T3);
                \draw[black] (B1)--(B3);
                \colortop{1,2}{c1}
                \colortop{3,4}{c2}
                \colorbot{1,2}{c1}
                \colorbot{3}{c2}
                \colorbot{4}{c3}
            \end{tikzpicture}
        \end{array}\right)\\
        &=\frac{3}{n^4}\left(\begin{array}{c}
            \begin{tikzpicture}[xscale=.4,yscale=.4,line width=1.25pt] 
                \foreach \i in {1,...,4}  { \path (\i,1.25) coordinate (T\i); \path (\i,.25) coordinate (B\i);  } 
                \filldraw[fill= black!12,draw=black!12,line width=4pt]  (T1) -- (T4) -- (B4) -- (B1) -- (T1);
                \draw[black] (T1)--(B1);
                \draw[black] (T2)--(B2);
                \draw[black] (T3)--(B3);
                \draw[black] (T4)--(B4);
                \colortop{1,2}{c1}
                \colortop{3,4}{c2}
                \colorbot{1,2}{c1}
                \colorbot{3}{c2}
                \colorbot{4}{c3}
            \end{tikzpicture}
        \end{array}\right)\left(\begin{array}{c}
            \begin{tikzpicture}[xscale=.4,yscale=.4,line width=1.25pt] 
                \foreach \i in {1,...,4}  { \path (\i,1.25) coordinate (T\i); \path (\i,.25) coordinate (B\i);  } 
                \filldraw[fill= black!12,draw=black!12,line width=4pt]  (T1) -- (T4) -- (B4) -- (B1) -- (T1);
                \draw[black] (T1)--(B1);
                \draw[black] (T4)--(B4);
                \draw[black] (T2)--(T3);
                \colortop{1,2}{c1}
                \colortop{3}{c2}
                \colortop{4}{c3}
                \colorbot{1,2,3}{c1}
                \colorbot{4}{c3}
            \end{tikzpicture}
        \end{array}\right)\left(\begin{array}{c}
            \begin{tikzpicture}[xscale=.4,yscale=.4,line width=1.25pt] 
                \foreach \i in {1,...,4}  { \path (\i,1.25) coordinate (T\i); \path (\i,.25) coordinate (B\i);  } 
                \filldraw[fill= black!12,draw=black!12,line width=4pt]  (T1) -- (T4) -- (B4) -- (B1) -- (T1);
                \draw[black] (T4)--(B4);
                \draw[black] (T1)--(B1)--(B3)--(T1);
                \colortop{1,2,3}{c1}
                \colortop{4}{c3}
                \colorbot{1,2,3}{c1}
                \colorbot{3}{c2}
                \colorbot{4}{c3}
            \end{tikzpicture}
        \end{array}\right)-\frac2n\left(\begin{array}{c}
            \begin{tikzpicture}[xscale=.4,yscale=.4,line width=1.25pt] 
                \foreach \i in {1,...,4}  { \path (\i,1.25) coordinate (T\i); \path (\i,.25) coordinate (B\i);  } 
                \filldraw[fill= black!12,draw=black!12,line width=4pt]  (T1) -- (T4) -- (B4) -- (B1) -- (T1);
                \draw [draw=none,fill=yellow, opacity=0.5] (1.75,1.45) -- (3.25,1.45) -- (3.25,1.05) -- (1.75,1.05)-- cycle;
                \draw[black] (T4)--(B4);
                \draw[black] (T2)--(T3);
                \draw[black] (B1)--(B3);
                \colortop{1,2}{c1}
                \colortop{3,4}{c2}
                \colorbot{1,2}{c1}
                \colorbot{3}{c2}
                \colorbot{4}{c3}
            \end{tikzpicture}
        \end{array}\right)
\end{align*}
\end{exam}

We now introduce our generators. For $i,j\in[k]$ and $\bfa\in W_{r-1,k}$, write $P_{i,j,\bfa}$ for the diagram-like basis element indexed by the set partition with singleton blocks $\multi{i}$ and $\multi{\ov j}$ and vertical bars whose colors have multiplicity given by $\bfa$. Now fix $i\in[r]$, $\bfa,\bfb\in W_{i,k}$ and $\bfc\in W_{r-i,k}$. Write $R_{\bfa,\bfb,\bfc}$ for the diagram-like basis element for the set partition with a block whose colors on top and bottom are given by $\bfa$ and $\bfb$ respectively as well as vertical bars with multiplicities given by $\bfc$.

\begin{exam} Here we show an example of each type of generator. 
     \[
        P_{2,1,(2,0,1,2)}=\begin{tikzpicture}[xscale=.5,yscale=.5,line width=1.25pt,baseline=1.8ex] 
    \foreach \i in {1,2,3,4,5,6}  { \path (\i,1.25) coordinate (T\i); \path (\i,.25) coordinate (B\i); } 
    \filldraw[fill= black!12,draw=black!12,line width=4pt]  (T1) -- (T6) -- (B6) -- (B1) -- (T1);

    \foreach \i in {2,...,6} {\draw[black] (T\i)--(B\i);}

    \colortop{2,3}{c1}
    \colortop{1}{c2}
    \colortop{4}{c3}
    \colortop{5,6}{c4}

    \colorbot{1,2,3}{c1}
    \colorbot{4}{c3}
    \colorbot{5,6}{c4}
    \end{tikzpicture}\in\MP_{6,4}(n)\]
     \[
        R_{(2,0,1),(0,2,1),(0,0,2)}=\begin{tikzpicture}[xscale=.5,yscale=.5,line width=1.25pt,baseline=1.8ex] 
    \foreach \i in {1,2,3,4,5}  { \path (\i,1.25) coordinate (T\i); \path (\i,.25) coordinate (B\i); } 
    \filldraw[fill= black!12,draw=black!12,line width=4pt]  (T1) -- (T5) -- (B5) -- (B1) -- (T1);

    \draw[black] (T1)--(T3)--(B3)--(B1)--(T1);
    \draw[black] (T4)--(B4);
    \draw[black] (T5)--(B5);

    \colortop{1,2}{c1}
    \colortop{3,4,5}{c3}

    \colorbot{1,2}{c2}
    \colorbot{3,4,5}{c3}
    \end{tikzpicture}\in\MP_{5,3}(n)\]
\end{exam}

As a base case, we show how the elements $\{P_{i,j,\bfa}:i,j\in[k],\bfa\in W_{r-1,k}\}$ generate the diagrams with no nonbasic blocks.

\begin{lemma}\label{lem:no_nonbasics} The elements $\{P_{i,j,\bfa}:i,j\in[k],\bfa\in W_{r-1,k}\}$ generate each $\DD_\tpi$ where $\tpi$ has no nonbasic blocks.
\end{lemma}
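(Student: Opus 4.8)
The plan is to induct on the number of singleton pairs appearing in $\tpi$. A multiset partition with no nonbasic blocks has every block either a vertical bar $\multi{i,\ov i}$ or a singleton, and since the top and bottom each carry $r$ vertices, the number of top singletons equals the number of bottom singletons; call this common value $p$, so $\tpi$ also has $r-p$ vertical bars. The base case is $p=1$: then $\DD_\tpi$ is literally one of the generators $P_{i,j,\bfc}$, where $\multi{i}$ and $\multi{\ov j}$ are the two singletons and $\bfc$ records the colors of the vertical bars. The case $p=0$, in which $\DD_\tpi=s_\bfa$ is the all--vertical-bars element, is separate: it is the identity of the corner $s_\bfa\MP_{r,k}(n)s_\bfa$ and is not itself a product of $P$'s (a top singleton of any $P$ survives every product), so it must be supplied on its own --- it occurs among the $R$-generators as the degenerate case whose distinguished block is a vertical bar, or it is absorbed once the unit of $\MP_{r,k}(n)$ is available.

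For the inductive step, take $p\ge 2$ and write the top singletons of $\tpi$ as $\multi{i_1},\dots,\multi{i_p}$, the bottom singletons as $\multi{\ov{j_1}},\dots,\multi{\ov{j_p}}$, and let $\bfc$ be the color multiset of its vertical bars. I would introduce the no-nonbasic-block diagram $\tmu$ with top singletons $\multi{i_1},\dots,\multi{i_{p-1}}$, bottom singletons $\multi{\ov{j_1}},\dots,\multi{\ov{j_{p-1}}}$, and vertical bars of colors $\bfc$ together with one extra bar of color $i_p$; and the generator $P_{i_p,j_p,\bfd}$, where $\bfd$ consists of the colors of $\bfc$ together with $j_1,\dots,j_{p-1}$. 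One checks that the bottom coloring of $\tmu$ agrees with the top coloring of this $P$, so $\DD_\tmu\DD_{P_{i_p,j_p,\bfd}}$ is nonzero, and that the snapshot in which $P$'s top singleton is stacked over a color-$i_p$ vertical bar of $\tmu$, each bottom singleton of $\tmu$ is stacked over a vertical bar of $P$, and every remaining vertical bar of $\tmu$ sits on a vertical bar of $P$, produces $\DD_\tpi$ with no middle components. The crux is then to show that every term of $\DD_\tmu\DD_{P_{i_p,j_p,\bfd}}$ with exactly $p$ singleton pairs equals $\tpi$: the $p$ top singletons can only be the $p-1$ top singletons of $\tmu$ together with the vertex promoted from the $\tmu$-vertical-bar met by $P$'s top singleton, forcing its color to be $i_p$; the bottom singletons are forced to $j_1,\dots,j_p$ by the mirror argument using $P$'s bottom singleton and its vertical bars; and the surviving vertical bars are forced to carry the multiset $\bfc$. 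Hence $\DD_\tmu\DD_{P_{i_p,j_p,\bfd}}=c\,\DD_\tpi+(\text{terms with fewer singleton pairs})$ for some nonzero constant $c$, and by the inductive hypothesis both $\DD_\tmu$ and all the lower terms lie in the algebra generated by the $P$'s, so $\DD_\tpi=c^{-1}\bigl(\DD_\tmu\DD_{P_{i_p,j_p,\bfd}}-(\text{lower terms})\bigr)$ does as well.

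The step I expect to be the real work is the color-propagation bookkeeping just sketched: checking that the colorings assigned to the three tiers of a snapshot force the vertical-bar/singleton pattern of every maximal term to be exactly $\tpi$, and, relatedly, that $c\neq0$ --- here the point is that the diagram product contributes only nonnegative powers of $n$, so the contributions to $\DD_\tpi$ cannot cancel, exactly as in Lemma \ref{lem:nonbasic_weight_and_product} and Lemma \ref{lem:factorize}. The one genuine loose end is the $p=0$ case flagged above, which lies outside what the $P$'s alone can produce and must be dealt with via the unit of $\MP_{r,k}(n)$ or the trivial $R$-generators.
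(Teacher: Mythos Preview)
Your induction on the number $p$ of singleton pairs is correct for $p\ge 1$, and the color bookkeeping you flag as ``the real work'' does go through: in any snapshot of $\DD_\tmu\DD_{P_{i_p,j_p,\bfd}}$ the top row is the top of $\tmu$ and the bottom row is the bottom of $P$, so obtaining $p$ top singletons forces $P$'s lone top singleton to meet a $\tmu$-vertical-bar (necessarily of color $i_p$), which in turn forces all $p-1$ bottom singletons of $\tmu$ onto $P$-vertical-bars of the matching colors $j_1,\dots,j_{p-1}$, and the remaining bars pair off as $\bfc$. Every snapshot contributes a nonnegative power of $n$, so the coefficient $c$ of $\DD_\tpi$ is strictly positive.

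The paper takes a genuinely different two-stage route. First it uses $P_{i,1,\bfc}$ (and the barred analogue) to recolor every singleton to color $1$, at the cost of lower terms with \emph{fewer} vertical bars; this reduces to the family $Q_m$ of diagrams with $m$ pairs $\multi{1},\multi{\ov 1}$ plus vertical bars of a fixed profile $\bfb$. Second, it builds the $Q_m$ from $Q_1=P_{1,1,\bfb'}$ via the explicit recursion $Q_1Q_m=\tfrac{m}{\bfb_1}\,nQ_m+\tfrac{\bfb_1-m}{\bfb_1}\,Q_{m+1}$. Your single induction is more direct and avoids the color-normalization detour; the paper's approach buys a concrete closed-form recursion at the end.

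Your observation about $p=0$ is correct and is in fact a gap shared by the paper's own proof: every product of $P$'s retains a top singleton from its leftmost factor, so the all-vertical-bars elements $s_\bfa=\DD_{\multi{\multi{1,\ov 1}^{\bfa_1},\dots,\multi{k,\ov k}^{\bfa_k}}}$ cannot be written as polynomials in the $P$'s alone. As you note, these occur among the $R$-generators (take the distinguished block to be a single vertical bar), so Theorem~\ref{thm:generators} is unaffected; the lemma as literally stated should be read with the unit supplied.
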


\begin{proof}
    Fix $\bfb\in W_{r,k}$. For $m\leq\bfb_1$, write $Q_m$ for the diagram-like basis element indexed by the multiset partition with $m$ pairs of singletons $\multi{1}$ and $\multi{\ov1}$ along with vertical bars $\multi{1,\ov1}^{\bfb_1-m},\multi{2,\ov2}^{\bfb_2},\dots,\multi{k,\ov k}^{\bfb_k}$. Notice that $Q_1=P_{1,1,\bfb'}$ where $\bfb'$ is $\bfb$ with the first entry decremented by one.

    Now consider the product $Q_1Q_m$. The singleton at the bottom of $Q_1$ will meet one of the $m$ singletons at the top of $Q_m$ in $\frac{m}{\bfb_1}$ of the snapshots. In the remaining snapshots, the singleton meets a vertical bar and breaks it into a singleton, resulting in $Q_{m+1}$: \[Q_1Q_m=\frac{m}{\bfb_1}nQ_m+\frac{\bfb_1-m}{\bfb_1}Q_{m+1}.\] Hence, the elements $Q_m$ for $1\leq m\leq \bfb_1$ are generated by the elements $P_{i,j,\bfa}$ (see Figure \ref{fig:Q_and_P}(i)).

    Suppose $\tpi$ is a multiset partition with no nonbasic blocks and a singleton $\multi{i}$ with $i\neq 1$. Let $\tpi'$ be the result of replacing that $\multi{i}$ with $\multi{1}$. Then for $\bfc\in W_{r-1,k}$ chosen so that $P_{i,1,\bfc}\DD_{\tpi'}$ is nonzero, this product includes $\DD_\tpi$ along with diagrams with fewer vertical bars (see Figure \ref{fig:Q_and_P}(ii)). Via this process and the corresponding process for singletons $\multi{\ov i}$, we can write any basic diagram with a non-one singleton as a polynomial in diagrams with fewer non-one singletons or fewer vertical bars. Repeating this process for any diagram in the resulting polynomial with a non-one singleton terminates in a polynomial in diagrams with all basic blocks and singletons of the form $\multi{1}$ or $\multi{\ov 1}$. These are just the $Q_m$ above for different choices of $\bfb$, so the $\{P_{i,j,\bfa}:i,j\in[k],\bfa\in W_{r-1,k}\}$ generate the diagrams with all basic blocks.
\end{proof}

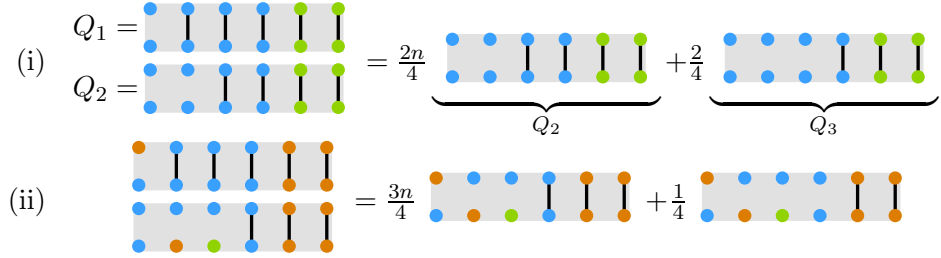
\begin{figure}[h]
    \centering
    \begin{enumerate}
\item[(i)] $\begin{array}{c}
        \raisebox{.6em}{$Q_1=$}\,\begin{tikzpicture}[xscale=.5,yscale=.5,line width=1.25pt] 
        \foreach \i in {1,2,3,4,5,6}  { \path (\i,1.25) coordinate (T\i); \path (\i,.25) coordinate (B\i); } 
            \filldraw[fill= black!12,draw=black!12,line width=4pt]  (T1) -- (T6) -- (B6) -- (B1) -- (T1);
            \draw[black] (T2)--(B2);
            \draw[black] (T3)--(B3);
            \draw[black] (T4)--(B4);
            \draw[black] (T5)--(B5);
            \draw[black] (T6)--(B6);
            \colortop{1,2,3,4}{c1}
            \colortop{5,6}{c3}
            \colorbot{1,2,3,4}{c1}
            \colorbot{5,6}{c3}
        \end{tikzpicture} \\
        \raisebox{.6em}{$Q_2=$}\,\begin{tikzpicture}[xscale=.5,yscale=.5,line width=1.25pt] 
        \foreach \i in {1,2,3,4,5,6}  { \path (\i,1.25) coordinate (T\i); \path (\i,.25) coordinate (B\i); } 
            \filldraw[fill= black!12,draw=black!12,line width=4pt]  (T1) -- (T6) -- (B6) -- (B1) -- (T1);
            \draw[black] (T3)--(B3);
            \draw[black] (T4)--(B4);
            \draw[black] (T5)--(B5);
            \draw[black] (T6)--(B6);
            \colortop{1,2,3,4}{c1}
            \colortop{5,6}{c3}
            \colorbot{1,2,3,4}{c1}
            \colorbot{5,6}{c3}
        \end{tikzpicture}
    \end{array}=\frac{2n}{4}\underbrace{\begin{array}{c}
        \begin{tikzpicture}[xscale=.5,yscale=.5,line width=1.25pt] 
        \foreach \i in {1,2,3,4,5,6}  { \path (\i,1.25) coordinate (T\i); \path (\i,.25) coordinate (B\i); } 
            \filldraw[fill= black!12,draw=black!12,line width=4pt]  (T1) -- (T6) -- (B6) -- (B1) -- (T1);
            \draw[black] (T3)--(B3);
            \draw[black] (T4)--(B4);
            \draw[black] (T5)--(B5);
            \draw[black] (T6)--(B6);
            \colortop{1,2,3,4}{c1}
            \colortop{5,6}{c3}
            \colorbot{1,2,3,4}{c1}
            \colorbot{5,6}{c3}
        \end{tikzpicture}
    \end{array}}_{Q_2}+\frac24\underbrace{\begin{array}{c}
        \begin{tikzpicture}[xscale=.5,yscale=.5,line width=1.25pt] 
        \foreach \i in {1,2,3,4,5,6}  { \path (\i,1.25) coordinate (T\i); \path (\i,.25) coordinate (B\i); } 
            \filldraw[fill= black!12,draw=black!12,line width=4pt]  (T1) -- (T6) -- (B6) -- (B1) -- (T1);
            \draw[black] (T4)--(B4);
            \draw[black] (T5)--(B5);
            \draw[black] (T6)--(B6);
            \colortop{1,2,3,4}{c1}
            \colortop{5,6}{c3}
            \colorbot{1,2,3,4}{c1}
            \colorbot{5,6}{c3}
        \end{tikzpicture}
    \end{array}}_{Q_3}$

\item[(ii)] \hspace{.31in}$\begin{array}{c}
        \begin{tikzpicture}[xscale=.5,yscale=.5,line width=1.25pt] 
        \foreach \i in {1,2,3,4,5,6}  { \path (\i,1.25) coordinate (T\i); \path (\i,.25) coordinate (B\i); } 
            \filldraw[fill= black!12,draw=black!12,line width=4pt]  (T1) -- (T6) -- (B6) -- (B1) -- (T1);
            \draw[black] (T2)--(B2);
            \draw[black] (T3)--(B3);
            \draw[black] (T4)--(B4);
            \draw[black] (T5)--(B5);
            \draw[black] (T6)--(B6);
            \colortop{2,3,4}{c1}
            \colortop{1,5,6}{c2}
            \colorbot{1,2,3,4}{c1}
            \colorbot{5,6}{c2}
        \end{tikzpicture} \\
        \begin{tikzpicture}[xscale=.5,yscale=.5,line width=1.25pt] 
        \foreach \i in {1,2,3,4,5,6}  { \path (\i,1.25) coordinate (T\i); \path (\i,.25) coordinate (B\i); } 
            \filldraw[fill= black!12,draw=black!12,line width=4pt]  (T1) -- (T6) -- (B6) -- (B1) -- (T1);
            \draw[black] (T4)--(B4);
            \draw[black] (T5)--(B5);
            \draw[black] (T6)--(B6);
            \colortop{1,2,3,4}{c1}
            \colortop{5,6}{c2}
            \colorbot{1,4}{c1}
            \colorbot{2,5,6}{c2}
            \colorbot{3}{c3}
        \end{tikzpicture}
    \end{array}=\frac{3n}{4}\begin{array}{c}
        \begin{tikzpicture}[xscale=.5,yscale=.5,line width=1.25pt] 
        \foreach \i in {1,2,3,4,5,6}  { \path (\i,1.25) coordinate (T\i); \path (\i,.25) coordinate (B\i); } 
            \filldraw[fill= black!12,draw=black!12,line width=4pt]  (T1) -- (T6) -- (B6) -- (B1) -- (T1);
            \draw[black] (T4)--(B4);
            \draw[black] (T5)--(B5);
            \draw[black] (T6)--(B6);
            \colortop{3,2,4}{c1}
            \colortop{1,5,6}{c2}
            \colorbot{1,4}{c1}
            \colorbot{2,5,6}{c2}
            \colorbot{3}{c3}
        \end{tikzpicture}
    \end{array}+\frac14\begin{array}{c}
        \begin{tikzpicture}[xscale=.5,yscale=.5,line width=1.25pt] 
        \foreach \i in {1,2,3,4,5,6}  { \path (\i,1.25) coordinate (T\i); \path (\i,.25) coordinate (B\i); } 
            \filldraw[fill= black!12,draw=black!12,line width=4pt]  (T1) -- (T6) -- (B6) -- (B1) -- (T1);
            \draw[black] (T5)--(B5);
            \draw[black] (T6)--(B6);
            \colortop{3,2,4}{c1}
            \colortop{1,5,6}{c2}
            \colorbot{1,4}{c1}
            \colorbot{2,5,6}{c2}
            \colorbot{3}{c3}
        \end{tikzpicture}
    \end{array}$
\end{enumerate}
    \caption{Examples of the processes employed in Lemma \ref{lem:no_nonbasics}.}
    \label{fig:Q_and_P}
\end{figure}

Finally, we use Lemma \ref{lem:factorize} and Lemma \ref{lem:no_nonbasics} to prove that the elements $P_{i,j,\bfa}$ and $R_{\bfa,\bfb,\bfc}$ defined above generate the algebra $\MP_{r,k}(n)$.

\begin{theorem}\label{thm:generators} The algebra $\MP_{r,k}(n)$ is generated by the set $\Theta=\{P_{i,j,\bfa}:i,j\in[k],\bfa\in W_{r-1,k}\}\cup\{R_{\bfa,\bfb,\bfc}:\bfa,\bfb\in W_{i,k},\bfc\in W_{r-i,k}\text{ for some }i\in[r]\}$.
\end{theorem}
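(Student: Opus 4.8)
The plan is to prove, by induction on the partial order $\preceq$ of Section~\ref{sec:generators}, that every basis element $\DD_\tpi$ lies in the subalgebra $\langle\Theta\rangle$ generated by $\Theta$; since $\{\DD_\tpi:\tpi\in\tilde\Pi_{2(r),k}\}$ is a basis, this gives the theorem. The three lemmas just proved are tailored to the three cases that arise. Throughout, whenever a nonbasic block is chosen we may assume it has at least as many unbarred as barred entries: flipping a diagram top-to-bottom induces (from the corresponding anti-automorphism of $P_r(n)$, noting $s_\bfa^{\,T}=s_\bfa$) an anti-automorphism of $\MP_{r,k}(n)$ sending $\DD_\tpi\mapsto\DD_{\tpi^T}$, hence $P_{i,j,\bfa}\mapsto P_{j,i,\bfa}$ and $R_{\bfa,\bfb,\bfc}\mapsto R_{\bfb,\bfa,\bfc}$, so it fixes $\Theta$ and $\langle\Theta\rangle$ and preserves $\nbw$ and $\vb$; thus we may transpose $\tpi$ if needed.

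If $\nbw(\tpi)=0$ then $\DD_\tpi\in\langle\Theta\rangle$ by Lemma~\ref{lem:no_nonbasics}; this covers every $\preceq$-minimal element and is the base case. Now assume $\nbw(\tpi)>0$ and $\DD_\tnu\in\langle\Theta\rangle$ for all $\tnu\prec\tpi$. If $\tpi$ has at least two nonbasic blocks, choose one, $\tB$. Then $\nbw(\tpi/\tB)=\nbw(\tpi)-\abs{\tB}<\nbw(\tpi)$, and since $\tpi$ has another nonbasic block, $\nbw(\tpi|_{\tB})=\abs{\tB}<\nbw(\tpi)$, so $\tpi/\tB\prec\tpi$ and $\tpi|_{\tB}\prec\tpi$. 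By Lemma~\ref{lem:factorize}, $\DD_\tpi$ equals $c\,\DD_{\tpi|_{\tB}}\DD_{\tpi/\tB}$ minus a combination of $\DD_\tnu$ with $\tnu\prec\tpi$, and the inductive hypothesis places all of these in $\langle\Theta\rangle$.

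The crux is the case where $\tpi$ has a unique nonbasic block $\tB$. Here $\nbw(\tpi/\tB)=0$, so $\DD_{\tpi/\tB}\in\langle\Theta\rangle$ by Lemma~\ref{lem:no_nonbasics}, and by Lemma~\ref{lem:factorize} and the inductive hypothesis it suffices to prove $\DD_{\tpi|_{\tB}}\in\langle\Theta\rangle$, where $\tpi|_{\tB}$ is the canonical diagram consisting of $\tB$, some vertical bars, and $d(\tB):=\abs{\#\mathrm{top}(\tB)-\#\mathrm{bot}(\tB)}$ singletons. If $d(\tB)=0$ then $\tpi|_{\tB}$ has no singletons and is literally a generator $R_{\bfa,\bfb,\bfc}$, so we are done. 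If $d(\tB)>0$ we run a secondary induction on $d(\tB)$: letting $\tB^{+}$ be $\tB$ with one extra bottom vertex of a suitably chosen colour adjoined (so $d(\tB^{+})=d(\tB)-1$) and $\tsigma$ the canonical single-nonbasic-block diagram with block $\tB^{+}$ and the same vertical bars, the secondary hypothesis gives $\DD_{\tsigma}\in\langle\Theta\rangle$; multiplying $\DD_{\tsigma}$ on the bottom by a basic diagram that caps off the new vertex (connecting it to a matching singleton and leaving everything else a pass-through) produces $\DD_{\tpi|_{\tB}}$ with positive coefficient---there is no cancellation, as every structure constant of the diagram-like basis is a non-negative multiple of a power of $n$---while by Lemma~\ref{lem:nonbasic_weight_and_product} each remaining term of the product either has strictly smaller nonbasic weight (handled by the primary induction), or is a single-nonbasic-block diagram with block $\tB^{+}$ (handled by the secondary induction), or is a single-nonbasic-block diagram with block $\tB$ and strictly fewer vertical bars, hence $\prec\tpi|_{\tB}$. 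Since basic diagrams lie in $\langle\Theta\rangle$ by Lemma~\ref{lem:no_nonbasics}, the induction closes and the theorem follows.

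The main obstacle is exactly this last case. The only generators carrying a nonbasic block, the $R_{\bfa,\bfb,\bfc}$, carry a \emph{balanced} one; producing a genuinely unbalanced $p\times q$ block (with $p>q$) by capping forces a seed block of size $2p>p+q$, of larger nonbasic weight than the target, so the primary $\preceq$-induction cannot reach it and must be supplemented by the orthogonal induction on the imbalance $d(\tB)$. Making this precise requires choosing the colour of the capped vertex and the auxiliary basic diagrams carefully, so that the intended capping is the only compatible matching, so that $\DD_{\tpi|_{\tB}}$ genuinely appears in the product, and so that every stray term is strictly smaller in one of the two inductive orders; once these choices are fixed, the remaining verification is a routine diagram computation with the product formula of Section~\ref{sec:diagram_like_basis}.
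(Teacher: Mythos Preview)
Your primary induction on $\preceq$ and the handling of the multi-block and basic cases match the paper. The gap is in the single-nonbasic-block case: your secondary induction on $d(\tB)$ does not close. When you cap $\DD_\tsigma$ (block $\tB^+$, canonical) by a basic $\DD_\tau$ carrying a top singleton of colour $c$, the snapshot in which that singleton meets a vertical bar $\multi{c,\ov c}$ of $\tsigma$ (rather than $\tB^+$) produces a stray term $\tnu$ with block $\tB^+$ intact, one fewer vertical bar, and an extra top singleton $\multi{c}$. This $\tnu$ is \emph{not} canonical, so the secondary hypothesis you stated (for $\tsigma$) does not apply. Strengthening the hypothesis to all single-nonbasic-block diagrams of imbalance $d(\tB)-1$ forces you to prove the inductive step for \emph{all} diagrams of imbalance $d(\tB)$; but reducing a non-canonical one to canonical form via Lemma~\ref{lem:factorize} yields error terms whose nonbasic block is $\tB$ with bottom vertices removed, hence of imbalance $\geq d(\tB)$, so the induction runs the wrong way. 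The ``suitable colour'' escape would need $c$ absent from every vertical bar of $\tpi|_{\tB}$, which fails whenever the top of $\tpi$ outside $\tB$ uses all $k$ colours (e.g.\ $k=2$, $r=4$, $\tB=\multi{1,2,\ov1}$ together with vertical bars $\multi{1,\ov1}$ and $\multi{2,\ov2}$). Incidentally, no stray term of $\DD_\tsigma\DD_\tau$ has block $\tB$ at all---every snapshot either caps $\tB^+$ down to $\tB$ (giving $\tpi|_{\tB}$ itself) or leaves $\tB^+$ intact---so your third category is empty.

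The paper avoids this by running the secondary induction on $\vb(\tpi)$ instead. For a single-nonbasic-block $\tpi$ with $s$ vertical bars it factors $\DD_\tpi$ (up to stray terms) as $\DD_{\tpi_1}\DD_{\tpi_2}\DD_{\tpi_3}$ where $\tpi_1,\tpi_3$ are basic and $\tpi_2\in\Theta$ is the $R$-generator whose nonbasic block consists of \emph{all} non-vertical-bar vertices of $\tpi$ merged together (hence automatically balanced), accompanied by the $s$ vertical bars. The singletons in $\tpi_1,\tpi_3$ cap this big block down to $\tB$; every stray term arises from a singleton hitting a vertical bar of $\tpi_2$ instead, so it still has at most one nonbasic block but strictly fewer than $s$ vertical bars, and the $\vb$-induction closes immediately.
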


\begin{proof}
    If $\tpi$ has more than one nonbasic block, then we apply Lemma \ref{lem:factorize} to write $\DD_\tpi$ as a polynomial in elements $\DD_\tnu$ where $\tnu\prec\tpi$. We can iterate this process on each $\DD_\tnu$ in this polynomial where $\tnu$ has more than one nonbasic block (see Example \ref{ex:factorize}). Because the poset $(\tilde\Pi_{2(r),k},\preceq)$ is finite, this iteration terminates with $\DD_\tpi$ written as a polynomial in elements $\DD_\tnu$ where each $\tnu$ has at most one nonbasic block. Hence, it suffices to show that $\Theta$ generates the elements $\DD_\tnu$ where $\tnu$ has at most one nonbasic block.

    By Lemma \ref{lem:no_nonbasics}, $\Theta$ generates the elements $\DD_\tpi$ where $\tpi$ has no nonbasic blocks. We now prove that $\Theta$ generates the diagrams with a single nonbasic block by induction on the number of vertical bars. If $\tpi$ has a single nonbasic block and no vertical bars, it has a straightforward factorization as $\DD_\tpi=\DD_{\tpi_1}\DD_{\tpi_2}\DD_{\tpi_3}$ where $\tpi_2$ is obtained from $\tpi$ by connecting all vertices into a single block, $\tpi_1$ has a vertical bar for each vertex at the top of the nonbasic block of $\tpi$ and a pair of identically colored singletons for each singleton atop $\tpi$, and $\tpi_3$ is obtained similarly from the bottom of $\tpi$ (see Figure \ref{fig:gen_thm_examples}(i)). Notice that $\DD_{\tpi_1},\DD_{\tpi_2},\DD_{\tpi_3}\in\Theta$.

    For $\tpi$ with a single nonbasic block and $s$ vertical bars, one can try a modified version of the above factorization in which a copy of each vertical bar in $\tpi$ is put in $\tpi_1$, $\tpi_2$, and $\tpi_3$ (see Figure \ref{fig:gen_thm_examples}(ii)). The element $\DD_\tpi$ appears in the product $\DD_{\tpi_1}\DD_{\tpi_2}\DD_{\tpi_3}$ when each singleton in $\tpi_1$ and $\tpi_3$ meets the nonbasic block in $\tpi_2$. When these singletons instead meet vertical bars in $\tpi_2$, the resulting diagram has fewer than $s$ vertical bars. By induction on the number of vertical bars, the set $\Theta$ generates the diagrams with at most one nonbasic block and hence the algebra $\MP_{r,k}(x)$.
\end{proof}

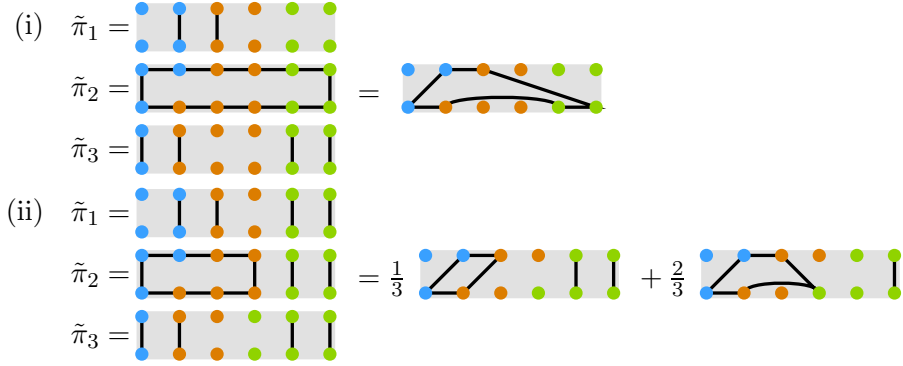
\begin{figure}
    \centering
    \begin{enumerate}
\item[(i)] \raisebox{-.35in}{$\begin{array}{c}
        \raisebox{.6em}{$\tpi_1=$}\,\begin{tikzpicture}[xscale=.5,yscale=.5,line width=1.25pt] 
        \foreach \i in {1,2,3,4,5,6}  { \path (\i,1.25) coordinate (T\i); \path (\i,.25) coordinate (B\i); } 
            \filldraw[fill= black!12,draw=black!12,line width=4pt]  (T1) -- (T6) -- (B6) -- (B1) -- (T1);
            \draw[black] (T2)--(B2);
            \draw[black] (T3)--(B3);
            \colortop{1,2}{c1}
            \colortop{3,4}{c2}
            \colortop{5,6}{c3}
            \colorbot{1,2}{c1}
            \colorbot{3,4}{c2}
            \colorbot{5,6}{c3}
        \end{tikzpicture} \\
        \raisebox{.6em}{$\tpi_2=$}\,\begin{tikzpicture}[xscale=.5,yscale=.5,line width=1.25pt] 
        \foreach \i in {1,2,3,4,5,6}  { \path (\i,1.25) coordinate (T\i); \path (\i,.25) coordinate (B\i); } 
            \filldraw[fill= black!12,draw=black!12,line width=4pt]  (T1) -- (T6) -- (B6) -- (B1) -- (T1);
            \draw[black] (T1)--(T6)--(B6)--(B1)--(T1);
            \colortop{1,2}{c1}
            \colortop{3,4}{c2}
            \colortop{5,6}{c3}
            \colorbot{1}{c1}
            \colorbot{2,3,4}{c2}
            \colorbot{5,6}{c3}
        \end{tikzpicture} \\
        \raisebox{.6em}{$\tpi_3=$}\,\begin{tikzpicture}[xscale=.5,yscale=.5,line width=1.25pt] 
        \foreach \i in {1,2,3,4,5,6}  { \path (\i,1.25) coordinate (T\i); \path (\i,.25) coordinate (B\i); } 
            \filldraw[fill= black!12,draw=black!12,line width=4pt]  (T1) -- (T6) -- (B6) -- (B1) -- (T1);
            \draw[black] (T1)--(B1);
            \draw[black] (T2)--(B2);
            \draw[black] (T5)--(B5);
            \draw[black] (T6)--(B6);
            \colortop{1}{c1}
            \colortop{2,3,4}{c2}
            \colortop{5,6}{c3}
            \colorbot{1}{c1}
            \colorbot{2,3,4}{c2}
            \colorbot{5,6}{c3}
        \end{tikzpicture}
    \end{array}=\begin{array}{c}
        \begin{tikzpicture}[xscale=.5,yscale=.5,line width=1.25pt] 
        \foreach \i in {1,2,3,4,5,6}  { \path (\i,1.25) coordinate (T\i); \path (\i,.25) coordinate (B\i); } 
            \filldraw[fill= black!12,draw=black!12,line width=4pt]  (T1) -- (T6) -- (B6) -- (B1) -- (T1);
            \draw[black] (T2)--(B1) -- (B2) .. controls +(0,+.35) and +(0,+.35) .. (B5) -- (B6) -- (T3) -- (T2);
            \colortop{1,2}{c1}
            \colortop{3,4}{c2}
            \colortop{5,6}{c3}
            \colorbot{1}{c1}
            \colorbot{2,3,4}{c2}
            \colorbot{5,6}{c3}
        \end{tikzpicture}
    \end{array}$}

\item[(ii)] \raisebox{-.35in}{$\begin{array}{c}
        \raisebox{.6em}{$\tpi_1=$}\,\begin{tikzpicture}[xscale=.5,yscale=.5,line width=1.25pt] 
        \foreach \i in {1,2,3,4,5,6}  { \path (\i,1.25) coordinate (T\i); \path (\i,.25) coordinate (B\i); } 
            \filldraw[fill= black!12,draw=black!12,line width=4pt]  (T1) -- (T6) -- (B6) -- (B1) -- (T1);
            \draw[black] (T2)--(B2);
            \draw[black] (T3)--(B3);
            \draw[black] (T5)--(B5);
            \draw[black] (T6)--(B6);
            \colortop{1,2}{c1}
            \colortop{3,4}{c2}
            \colortop{5,6}{c3}
            \colorbot{1,2}{c1}
            \colorbot{3,4}{c2}
            \colorbot{5,6}{c3}
        \end{tikzpicture} \\
        \raisebox{.6em}{$\tpi_2=$}\,\begin{tikzpicture}[xscale=.5,yscale=.5,line width=1.25pt] 
        \foreach \i in {1,2,3,4,5,6}  { \path (\i,1.25) coordinate (T\i); \path (\i,.25) coordinate (B\i); } 
            \filldraw[fill= black!12,draw=black!12,line width=4pt]  (T1) -- (T6) -- (B6) -- (B1) -- (T1);
            \draw[black] (T1)--(T4)--(B4)--(B1)--(T1);
            \draw[black] (T5)--(B5);
            \draw[black] (T6)--(B6);
            \colortop{1,2}{c1}
            \colortop{3,4}{c2}
            \colortop{5,6}{c3}
            \colorbot{1}{c1}
            \colorbot{2,3,4}{c2}
            \colorbot{5,6}{c3}
        \end{tikzpicture} \\
        \raisebox{.6em}{$\tpi_3=$}\,\begin{tikzpicture}[xscale=.5,yscale=.5,line width=1.25pt] 
        \foreach \i in {1,2,3,4,5,6}  { \path (\i,1.25) coordinate (T\i); \path (\i,.25) coordinate (B\i); } 
            \filldraw[fill= black!12,draw=black!12,line width=4pt]  (T1) -- (T6) -- (B6) -- (B1) -- (T1);
            \draw[black] (T1)--(B1);
            \draw[black] (T2)--(B2);
            \draw[black] (T5)--(B5);
            \draw[black] (T6)--(B6);
            \colortop{1}{c1}
            \colortop{2,3}{c2}
            \colortop{4,5,6}{c3}
            \colorbot{1}{c1}
            \colorbot{2,3}{c2}
            \colorbot{4,5,6}{c3}
        \end{tikzpicture}
    \end{array}=\frac13\begin{array}{c}
        \begin{tikzpicture}[xscale=.5,yscale=.5,line width=1.25pt] 
        \foreach \i in {1,2,3,4,5,6}  { \path (\i,1.25) coordinate (T\i); \path (\i,.25) coordinate (B\i); } 
            \filldraw[fill= black!12,draw=black!12,line width=4pt]  (T1) -- (T6) -- (B6) -- (B1) -- (T1);
            \draw[black] (T2)--(B1)--(B2)--(T3)--(T2);
            \draw[black] (T5)--(B5);
            \draw[black] (T6)--(B6);
            \colortop{1,2}{c1}
            \colortop{3,4}{c2}
            \colortop{5,6}{c3}
            \colorbot{1}{c1}
            \colorbot{2,3}{c2}
            \colorbot{4,5,6}{c3}
        \end{tikzpicture}
    \end{array}+\frac23\begin{array}{c}
        \begin{tikzpicture}[xscale=.5,yscale=.5,line width=1.25pt] 
        \foreach \i in {1,2,3,4,5,6}  { \path (\i,1.25) coordinate (T\i); \path (\i,.25) coordinate (B\i); } 
            \filldraw[fill= black!12,draw=black!12,line width=4pt]  (T1) -- (T6) -- (B6) -- (B1) -- (T1);
            \draw[black] (T2)--(B1)--(B2) .. controls +(0,+.35) and +(0,+.35) .. (B4) --(T3)--(T2);
            \draw[black] (T6)--(B6);
            \colortop{1,2}{c1}
            \colortop{3,4}{c2}
            \colortop{5,6}{c3}
            \colorbot{1}{c1}
            \colorbot{2,3}{c2}
            \colorbot{4,5,6}{c3}
        \end{tikzpicture}
    \end{array}$}
\end{enumerate}
    \caption{Examples of the products in Theorem \ref{thm:generators}.}
    \label{fig:gen_thm_examples}
\end{figure}

\section{Change-of-Basis Formula}\label{app:changeOfBasis}

In this section, we give a change-of-basis formula from Orellana and Zabrocki's orbit basis to the diagram-like basis. While the earlier sections dealt with the centralizer algebras $P_r(n)$ and $\MP_{r,k}(n)$, the combinatorics of the change-of-basis formula do not depend on the semisimplicity of the algebras. Hence, this section considers the abstract algebras $P_r(x)$ and $\MP_{r,k}(x)$ over $\C(x)$ for $x$ an indeterminate. These results can all be applied to the centralizer algebra case by specializing $x$ to an integer $n\geq 2r$.

In analogy with the construction of the diagram-like basis as a projection of the diagram basis of $P_r(x)$, we can define the \defn{orbit-like basis} by projecting the orbit basis of $P_r(x)$: \[\OO_{\tpi}=s_\bfa \TT_\pi s_\bfb\] where $\pi\in\Pi_{2(r)}$ is any set partition so that $\kappa_{\bfa,\bfb}(\pi)=\tpi$. For a multiset partition $\tpi$, define $m_{\tB}(\tpi)$ to be the multiplicity of the block $\tB$ in $\tpi$ and write \[m(\tpi)!=\prod_{\substack{\tB\in\tpi\\\text{distinct}}} m_\tB(\tpi)\] where the product is over \textit{distinct} blocks of $\tpi$.

\begin{theorem}\label{thm:change_of_basis} For $\tpi\in\tilde\Pi_{2(r),k}$ whose unbarred entries have multiplicity given by $\bfa\in W_{r,k}$, write \[\omega(\tpi)=\frac{m(\tpi)!}{\abs{\SG_\bfa}}\prod_{\tB\in\tpi}m(\tB|_{\ovb{k}})!.\] Then the map \begin{align*}
    \varphi:\tilde{P}_{r,k}(x)&\to\MP_{r,k}(x)\\
    \OO_\tpi&\mapsto \omega(\tpi)\XX_\tpi
\end{align*} where $\{\XX_\tpi:\tpi\in\tilde\Pi_{2(r),k}\}$ is the orbit basis of Orellana and Zabrocki is an isomorphism of algebras.
\end{theorem}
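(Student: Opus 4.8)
The plan is to recognize $\varphi$ as the isomorphism $\tilde{P}_{r,k}(x)\cong\MP_{r,k}(x)$ already implicit in Section~\ref{sec:diagram_like_basis}, by determining how that isomorphism acts on the orbit-like basis. First note that $\varphi$ is a $\C(x)$-linear bijection: exactly as for the diagram-like basis, each orbit basis element $\TT_\pi$ of $P_r(x)$ projects under $s_\bfa(\cdot)s_\bfb$ onto a nonzero multiple of the sum of the $\TT_\mu$ over a single $\SG_\bfa\times\SG_\bfb$-orbit of $\Pi_{2(r)}$, so the distinct projections $\OO_\tpi$ have pairwise disjoint supports and therefore form a basis of $\tilde{P}_{r,k}(x)$; the $\XX_\tpi$ form Orellana and Zabrocki's basis of $\MP_{r,k}(x)$; and $\omega(\tpi)>0$ for every $\tpi$. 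Hence it remains only to check that $\varphi$ is multiplicative. Since the structure constants of both bases lie in $\C[x]$, it is enough to verify this after specializing $x$ to an arbitrary integer $n\geq 2r$; fix such an $n$, so that $\tilde{P}_{r,k}(n)$ and $\MP_{r,k}(n)$ are both realized as $\End_{\SG_n}\!\bigl(\PP^r(V_{n,k})\bigr)$.

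For this fixed $n$, let $\Psi\colon\tilde{P}_{r,k}(n)\stackrel{\sim}{\longrightarrow}\MP_{r,k}(n)=\End_{\SG_n}\!\bigl(\PP^r(V_{n,k})\bigr)$ be the algebra isomorphism furnished by Lemma~\ref{lem:decompose_by_idempotents}, Lemma~\ref{lem:induced_isomorphism} and Theorem~\ref{thm:isomorphic_to_painted}: concretely, $\Psi$ sends $s_\bfa\psi s_\bfb\in s_\bfa P_r(n)s_\bfb$ to $\Phi\circ(s_\bfa\psi s_\bfb)\circ\Phi^{-1}$, where $\Phi\colon\bigoplus_\bfc s_\bfc{V_n}^{\otimes r}\to\PP^r(V_{n,k})$ is the linear isomorphism constructed in Section~\ref{sec:diagram_like_basis} and $s_\bfa\psi s_\bfb$ is regarded as the endomorphism of $\bigoplus_\bfc s_\bfc{V_n}^{\otimes r}$ that vanishes off the $\bfb$-summand. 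Because $\Psi$ is already known to be an algebra isomorphism, it is enough to show
\[\Psi(\OO_\tpi)=\omega(\tpi)\,\XX_\tpi\qquad\text{for every }\tpi\in\tilde\Pi_{2(r),k};\]
for then the specialization of $\varphi$ at $x=n$ is exactly $\Psi$, so $\varphi$ is multiplicative at every $n\geq 2r$ and hence over $\C(x)$, and being a linear bijection it is an algebra isomorphism.

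To prove the displayed identity I would first record the dictionary induced by $\Phi$: by its construction, $\Phi$ carries the vectors $s_\bfa e_\bfi$, as $\bfi$ ranges over $\SG_\bfa$-orbit representatives in $[n]^r$, bijectively onto the monomial basis of $U_\bfa$, and it intertwines the two $\SG_n$-actions, each of which simply relabels indices. On the other side, recall that $\XX_\tpi$ is, in analogy with the orbit basis of $P_r(n)$, the endomorphism of $\PP^r(V_{n,k})$ whose matrix in this monomial basis is the indicator of the $\SG_n$-orbit of pairs of monomials that corresponds to $\tpi$ --- the bijection between such orbits and $\tilde\Pi_{2(r),k}$ being available because $n\geq 2r$. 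One then expands $\Psi(\OO_\tpi)(X)=\Phi\bigl(s_\bfa\TT_\pi s_\bfb\,\Phi^{-1}(X)\bigr)$ for a monomial $X$, using that $s_\bfb e_\bfi=\frac{1}{\abs{\SG_\bfb}}\sum_{\sigma\in\SG_\bfb}e_{\sigma.\bfi}$, that each $s_\bfa e_\bft$ maps under $\Phi$ to the single monomial it determines, and that $\TT_\pi$ is the $0$--$1$ matrix supported on the $\SG_n$-orbit of pairs of index tuples realizing $\pi$ exactly. Reading off the coefficient of a monomial in the output, one obtains $0$ unless the input/output pair realizes $\tpi$, and otherwise an orbit--stabilizer count of the pairs $(\sigma,\bft)$ contributing to that coefficient. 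After cancelling the within-block vertex symmetries of $\tpi$ against the symmetries of the chosen monomial representatives, this coefficient collapses to the constant $\omega(\tpi)$ of the statement, whose three factors reflect respectively the symmetrization idempotents $s_\bfa,s_\bfb$, the permutations $m(\tpi)!$ of equal blocks of $\tpi$, and the within-block vertex symmetries $\prod_{\tB\in\tpi}m(\tB|_{\ovb{k}})!$ that remain. This establishes $\Psi(\OO_\tpi)=\omega(\tpi)\XX_\tpi$.

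The crux is precisely this last count: $\OO_\tpi$ is normalized as an average over a $\SG_\bfa\times\SG_\bfb$-orbit of set partitions, whereas $\XX_\tpi$ carries the $0$--$1$ normalization natural to the centralizer of $\PP^r(V_{n,k})$, and reconciling the two forces one to disentangle the contributions of the idempotents $s_\bfa,s_\bfb$, of the equal-block symmetry $m(\tpi)!$, and of the within-block symmetries, and to check that exactly the factors appearing in $\omega(\tpi)$ survive once one works with Orellana and Zabrocki's precise normalization of $\XX_\tpi$. An alternative route that avoids passing to integers $n\geq 2r$ is to expand the product $\OO_\tpi\OO_\tnu$ directly via the painted multiplication of Lemma~\ref{lem:decompose_by_idempotents} and the orbit-basis product of $P_r(x)$, and then to compare it, after rescaling by $\omega$, with Orellana and Zabrocki's product formula for $\XX_\tpi\XX_\tnu$; but the same normalization bookkeeping is the essential difficulty either way.
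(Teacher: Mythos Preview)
Your approach is genuinely different from the paper's. The paper takes precisely what you call the ``alternative route'': it expands $\OO_\tpi\OO_\tnu$ via the painted multiplication and the orbit-basis product formula of $P_r(x)$, expands $\omega(\tpi)\omega(\tnu)\XX_\tpi\XX_\tnu$ via Orellana--Zabrocki's product formula, and matches the two term by term through a chain of combinatorial lemmas (Lemmas~\ref{lem:subgroup_sizes}--\ref{lem:size_of_gamma}) about Young-subgroup actions on three-row diagrams. Your strategy---identify the specialization $\varphi|_{x=n}$ with the algebra isomorphism $\Psi$ already furnished by Theorem~\ref{thm:isomorphic_to_painted}, then lift to $\C(x)$ by polynomiality of the structure constants---is more conceptual and replaces all of that three-row combinatorics with a single matrix-entry computation for $\Psi(\OO_\tpi)$ in the monomial basis of $\PP^r(V_{n,k})$. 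The tradeoff is that your route depends on Orellana--Zabrocki's concrete realization of $\XX_\tpi$ as a $0$--$1$ indicator on monomial pairs, which this paper does not record, whereas the paper's route needs only their product formula (which \emph{is} recorded in Section~\ref{app:changeOfBasis}). One caution on your sketch of the count: $\omega(\tpi)$ is asymmetric in $\bfa$ and $\bfb$---only $\abs{\SG_\bfa}$ appears---so your informal attribution of its three factors to ``$s_\bfa,s_\bfb$'', ``$m(\tpi)!$'', and ``within-block symmetries'' does not line up as stated; when you execute the count carefully you will find that the contribution of $s_\bfb$ cancels against the multiplicities introduced by $\Phi^{-1}$ on the input monomial, leaving only $\abs{\SG_\bfa}$ in the denominator together with the \emph{barred} within-block factors $\prod_{\tB\in\tpi}m(\tB|_{\ovb{k}})!$.
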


Because each $\omega(\tpi)$ is nonzero, it is clear that this map is an isomorphism of vector spaces--the remainder of this section is devoted to proving that it respects the multiplication. First, we observe that such an isomorphism gives us the following change-of-basis formula from Orellana and Zabrocki's orbit basis to the diagram-like basis:

\begin{align*}
    \DD_\tpi&=\varphi(s_\bfa L_\pi s_\bfb)\\
            &=\sum_{\tnu\leq\tpi} c_{\tnu,\tpi}\omega(\tnu)\XX_\tnu
\end{align*} where for a fixed $\pi$ such that $\kappa_{\bfa,\bfb}(\pi)=\tpi$, $c_{\tnu,\tpi}$ is the number of $\nu\leq\pi$ such that $\kappa_{\bfa,\bfb}(\nu)=\tnu$.

\begin{exam} We expand the following diagram-like basis element in the orbit basis of Orellana and Zabrocki:
    \newcommand{\subdiag}[1]{\hspace{-.08in}\begin{tabular}{c}
    \begin{tikzpicture}[xscale=.25,yscale=.25,line width=1pt] 
        \foreach \i in {1,2,3,4}  { \path (\i,1.25) coordinate (T\i); \path (\i,.25) coordinate (B\i);} 
        \filldraw[fill= black!12,draw=black!12,line width=4pt]  (T1) -- (T4) -- (B4) -- (B1) -- (T1);
        #1
        \colortop{1,2,3,4}{black}
        \colorbot{1,2,3,4}{black}
    \end{tikzpicture}
\end{tabular}}

\newcommand{\submultidiag}[1]{\hspace{-.08in}\begin{tabular}{c}
    \begin{tikzpicture}[xscale=.25,yscale=.25,line width=1pt] 
        \foreach \i in {1,2,3,4}  { \path (\i,1.25) coordinate (T\i); \path (\i,.25) coordinate (B\i);} 
        \filldraw[fill= black!12,draw=black!12,line width=4pt]  (T1) -- (T4) -- (B4) -- (B1) -- (T1);
        #1
        \colortop{1,2}{c1}
        \colortop{3,4}{c2}
        \colorbot{1,2,3}{c1}
        \colorbot{4}{c2}
    \end{tikzpicture}
\end{tabular}}

\begin{align*}
    \DD_{\submultidiag{\draw[black] (T1)--(B1);\draw[black] (T2)--(B2);\draw[black] (T3)--(T4)--(B4)--(B3)--(T3);}}&=s_{(2,2)}\LL_{\subdiag{\draw[black] (T1)--(B1);\draw[black] (T2)--(B2);\draw[black] (T3)--(T4)--(B4)--(B3)--(T3);}}s_{(3,1)}\\
    \hspace{-.1in}&=s_{(2,2)}\left(
    \TT_{\subdiag{\draw[black] (T1)--(B1);\draw[black] (T2)--(B2);\draw[black] (T3)--(T4)--(B4)--(B3)--(T3);}}
    \hspace{-.1in}+\TT_{\subdiag{\draw[black] (T1)--(B1)--(B2)--(T2)--(T1);\draw[black] (T3)--(T4)--(B4)--(B3)--(T3);}}
    \hspace{-.1in}+\TT_{\subdiag{\draw[black] (T1)--(B1);\draw[black] (T2)--(T4)--(B4)--(B2)--(T2);}}
    \hspace{-.1in}+\TT_{\subdiag{\draw[black] (T1)  .. controls +(0,-.4) and +(0,-.4) .. (T3) -- (T4) -- (B4) -- (B3) .. controls +(0,+.4) and +(0,+.4) .. (B1) -- (T1);\draw[black] (T2) -- (B2);}}
    \hspace{-.1in}+\TT_{\subdiag{\draw[black] (T1)--(T4)--(B4)--(B1)--(T1);}}\hspace{-.1in}\right)s_{(3,1)}
\end{align*}
\begin{align*}
    \DD_{\submultidiag{\draw[black] (T1)--(B1);\draw[black] (T2)--(B2);\draw[black] (T3)--(T4)--(B4)--(B3)--(T3);}}&=\hspace{.35in}\OO_{\submultidiag{\draw[black] (T1)--(B1);\draw[black] (T2)--(B2);\draw[black] (T3)--(T4)--(B4)--(B3)--(T3);}}
    \hspace{-.1in}+\hspace{.35in}\OO_{\submultidiag{\draw[black] (T1)--(B1)--(B2)--(T2)--(T1);\draw[black] (T3)--(T4)--(B4)--(B3)--(T3);}}
    \hspace{-.1in}+\hspace{.3in}2\OO_{\submultidiag{\draw[black] (T1)--(B1);\draw[black] (T2)--(T4)--(B4)--(B2)--(T2);}}
    \hspace{-.1in}+\hspace{.35in}\OO_{\submultidiag{\draw[black] (T1)--(T4)--(B4)--(B1)--(T1);}}\\
    &\mapsto\omega(\tpi_1)\XX_{\submultidiag{\draw[black] (T1)--(B1);\draw[black] (T2)--(B2);\draw[black] (T3)--(T4)--(B4)--(B3)--(T3);}}
    \hspace{-.1in}+\omega(\tpi_2)\XX_{\submultidiag{\draw[black] (T1)--(B1)--(B2)--(T2)--(T1);\draw[black] (T3)--(T4)--(B4)--(B3)--(T3);}}
    \hspace{-.1in}+2\omega(\tpi_3)\XX_{\submultidiag{\draw[black] (T1)--(B1);\draw[black] (T2)--(T4)--(B4)--(B2)--(T2);}}
    \hspace{-.1in}+\omega(\tpi_4)\XX_{\submultidiag{\draw[black] (T1)--(T4)--(B4)--(B1)--(T1);}}
    \\
    &=\hspace{.23in}\frac{2!}{3!}\XX_{\submultidiag{\draw[black] (T1)--(B1);\draw[black] (T2)--(B2);\draw[black] (T3)--(T4)--(B4)--(B3)--(T3);}}
    \hspace{-.1in}+\hspace{.2in}\frac{2!}{3!}\XX_{\submultidiag{\draw[black] (T1)--(B1)--(B2)--(T2)--(T1);\draw[black] (T3)--(T4)--(B4)--(B3)--(T3);}}
    \hspace{-.1in}+\hspace{.2in}2\frac{2!}{3!}\XX_{\submultidiag{\draw[black] (T1)--(B1);\draw[black] (T2)--(T4)--(B4)--(B2)--(T2);}}
    \hspace{-.1in}+\hspace{.21in}\frac{3!}{3!}\XX_{\submultidiag{\draw[black] (T1)--(T4)--(B4)--(B1)--(T1);}}\\
    &=\hspace{.26in}\frac13\XX_{\submultidiag{\draw[black] (T1)--(B1);\draw[black] (T2)--(B2);\draw[black] (T3)--(T4)--(B4)--(B3)--(T3);}}
    \hspace{-.1in}+\hspace{.24in}\frac13\XX_{\submultidiag{\draw[black] (T1)--(B1)--(B2)--(T2)--(T1);\draw[black] (T3)--(T4)--(B4)--(B3)--(T3);}}
    \hspace{-.1in}+\hspace{.32in}\frac23\XX_{\submultidiag{\draw[black] (T1)--(B1);\draw[black] (T2)--(T4)--(B4)--(B2)--(T2);}}
    \hspace{-.1in}+\hspace{.34in}\XX_{\submultidiag{\draw[black] (T1)--(T4)--(B4)--(B1)--(T1);}}
\end{align*}
\end{exam}

\subsection{Preliminary Definitions and Enumerative Results}

The product formula for the orbit basis $\{\TT_\pi:\pi\in\Pi_{2(r)}\}$ for $P_r(x)$ use set partitions that include unbarred, barred, and double-barred elements. To that end, we make the following definitions. For $\pi,\nu\in\Pi_{2(r)}$, write $\Gamma^\pi_\nu$ for the set of set partitions $\gamma$ of $[r]\cup\ovb{r}\cup\ovovb{r}$ such that $\gamma|_{[r]\cup\ovb{r}}=\pi$ and $\gamma|_{\ovb{r}\cup\ovovb{r}}=\ov\nu$ where $\ov\nu$ is the result of adding a bar to each element in $\nu$. For such a $\gamma$, write $\beta_\gamma=\{S\in\gamma:\forall i\in S, i\in\ovb{r}\}$ for the set of blocks of $\gamma$ containing only barred numbers. Write \[b_\gamma(x)=(x-\ell(\gamma|_{[r]\cup\ovovb{r}}))_{\ell(\beta_\gamma)}\]  where $(a)_n=a(a-1)\cdots(a-n+1)$. The product formula for the orbit basis is

\begin{align*}
    \TT_\pi \TT_\nu&=\sum_{\gamma\in\Gamma^\pi_\nu}b_\gamma(x)\TT_\gamma
\end{align*} where the $\gamma$ in the subscript is understood to be an element of $\Pi_{2(r)}$ by taking the restriction $\gamma|_{[r]\cup\ovovb{r}}$ and removing a bar from each double-barred entry (see Theorem 4.14 of \cite{benkart2017partition} for details).

For the orbit basis $\{\XX_\tpi:\tpi\in\tilde\Pi_{2(r),k}\}$ of $\MP_{r,k}(x)$ we make similar definitions. For $\tpi,\tnu\in\tilde\Pi_{2(r),k}$, write $\tilde\Gamma^\tpi_\tnu$ for the set of multiset partitions $\tgamma$ with $r$ elements each from $[k]$, $\ovb{k}$, and $\ovovb{k}$ such that $\tgamma|_{[k]\cup\ovb{k}}=\tpi$ and $\tgamma|_{\ovb{k}\cup\ovovb{k}}=\ov\tnu$. For such a $\tgamma$, write $\beta_\tgamma=\multi{\tS\in\tgamma:\forall i\in \tS, i\in\ovb{k}}$. For any multiset partition $\trho$, write $m_\tS(\trho)$ for the multiplicity of $\tS$ in $\trho$ and \[m(\trho)!=\prod_{\substack{\tS\in\trho\\\text{distinct}}}m_\tS(\trho)!.\] Finally, write \begin{align*}
    \tilde{b}_\tgamma(x)&=\frac{(x-\ell(\tgamma|_{[k]\cup\ovovb{k}}))_{\ell(\beta_\tgamma)}}{m(\beta_\tgamma)!}\\
    a_\tgamma&=\prod_{\substack{\tS\in\tgamma|_{[k]\cup\ovovb{k}}\\\text{distinct}}}\frac{\ell(\tgamma_\tS)!}{m(\tgamma_\tS)!}
\end{align*} where $\tgamma_\tS=\multi{\tT\in\tgamma:\tT|_{[k]\cup\ovovb{k}}=\tS}$. Then, the product for the orbit basis of $\MP_{r,k}(x)$ is given by \begin{align*}
    \XX_\tpi \XX_\tnu &=\sum_{\tgamma\in\tilde\Gamma^\tpi_\tnu} a_\tgamma \tilde{b}_\tgamma(x) \XX_\tgamma
\end{align*} where the $\tgamma$ in the subscript is understood to be an element of $\tilde{\Pi}_{2r,k}$ by taking the restriction $\gamma|_{[k]\cup\ovovb{k}}$ and removing a bar from each double-barred entry (see Section 3 of \cite{orellana2020howe} for details).

To handle these set and multiset partitions on three alphabets combinatorially, we will want to extend the notation of our painting function $\kappa_{\bfa,\bfb}$ to them. In particular, $\kappa_{\bfa,\bfb,\bfc}(\gamma)$ will be the result of replacing the unbarred, barred, and double-barred elements according to $\bfa,\bfb$, and $\bfc$ respectively. It will be useful later to write $\Gamma_\nu^\pi(\tmu)$ for the set of $\gamma\in\Gamma_\nu^\pi$ such that $\kappa_{\bfa,\bfb,\bfc}(\gamma)=\tmu$.

For $\pi,\nu\in\Pi_{2(r)}$ with $\pi|_{\ovb{r}}=\ov\nu|_{\ov r}$, let $\pi\ast\nu\in\Gamma^\pi_\nu$ be the set partition obtained by placing the diagram of $\pi$ atop the diagram of $\nu$ and identifying the corresponding vertices in the center. This set partition plays a central role because any $\gamma\in\Gamma^\pi_\nu$ only differs from $\pi\ast\nu$ by connecting some blocks on the very top and very bottom. That is, if we denote by \defn{$\Break(\gamma)$} the result of splitting each block of $\gamma$ that does not contain a vertex in the middle row of the diagram into its restriction to $[r]$ and restriction to $\ovb{r}$, then $\gamma\in\Gamma^\pi_\nu$ if and only if $\Break(\gamma)=\pi\ast\nu$.

\begin{exam} We compute $\Break(\gamma)$ of a set partition $\gamma$ of $[6]\cup\ovb{6}\cup\ovovb{6}$.
    \begin{align*}
        \gamma=\begin{array}{c}
            \begin{tikzpicture}[xscale=.5,yscale=.5,line width=1.25pt] 
                \foreach \i in {1,2,3,4,5,6}  { \path (\i,1.25) coordinate (T\i); \path (\i,.25) coordinate (M\i); \path (\i,-.75) coordinate (B\i); } 
                \filldraw[fill= black!12,draw=black!12,line width=4pt]  (T1) -- (T6) -- (B6) -- (B1) -- (T1);
                \draw[black] (T1) .. controls +(+0.50,0) and +(+0.50,0) .. (B1);
                \draw[black] (T1) -- (T2);
                \draw[black] (B1) -- (B2) -- (B3);
                \draw[black] (T3) -- (M3) -- (M2) -- (T3);
                \draw[black] (T4) -- (T5);
                \draw[black] (T5) .. controls +(+0.50,0) and +(+0.50,0) .. (B5);
                \draw[black] (B4) -- (B5);
                \draw[black] (M4) -- (M5) -- (B6);
                \colortop{1,2,3,4,5,6}{black};
                \colormid{1,2,3,4,5,6}{black};
                \colorbot{1,2,3,4,5,6}{black};
            \end{tikzpicture}
        \end{array} && \Break(\gamma)= \begin{array}{c}
            \begin{tikzpicture}[xscale=.5,yscale=.5,line width=1.25pt] 
                \foreach \i in {1,2,3,4,5,6}  { \path (\i,1.25) coordinate (T\i); \path (\i,.25) coordinate (M\i); \path (\i,-.75) coordinate (B\i); } 
                \filldraw[fill= black!12,draw=black!12,line width=4pt]  (T1) -- (T6) -- (B6) -- (B1) -- (T1);
                \draw[black] (T1) -- (T2);
                \draw[black] (B1) -- (B2) -- (B3);
                \draw[black] (T3) -- (M3) -- (M2) -- (T3);
                \draw[black] (T4) -- (T5);
                \draw[black] (B4) -- (B5);
                \draw[black] (M4) -- (M5) -- (B6);
                \colortop{1,2,3,4,5,6}{black};
                \colormid{1,2,3,4,5,6}{black};
                \colorbot{1,2,3,4,5,6}{black};
            \end{tikzpicture}
        \end{array}
    \end{align*}
\end{exam}

To analyze the orbit basis of $\MP_{r,k}(x)$, we want to investigate how $\pi\ast\nu$ acts when $\nu$ is acted upon by some permutation. This inspires the definition of several subgroups of permutations. For $\rho$ a set partition of $[r]$ and $\bfa\in W_{r,k}$, define \begin{align*}
    \SG_\bfa^\rho=\{\sigma\in\SG_\bfa:\sigma.\rho=\rho\}
\end{align*} where the action $\sigma.\rho$ applies $\sigma$ to each element of each block of $\rho$. This subgroup factors as a semidirect product \begin{align*}
    \SG_\bfa^\rho=X_\bfa^\rho Y_\bfa^\rho
\end{align*} where the permutations in $X_\bfa^\rho$ permute whole blocks and the permutations in $Y_\bfa^\rho$ permute only within blocks of $\rho$.

\begin{exam} For clarity, we illustrate the above factorization in the case that all elements are the same color (where the corresponding young subgroup $\SG_{(r)}$ is equal to the full symmetric group $\SG_r$). The permutation \[\sigma=(3\,4)(1\,5)(2\,6)(8)(7\,9)\] fixes the set partition $\rho=\{\{1,2,4\},\{3,5,6\},\{8\},\{7,9\}\}$ and hence $\sigma\in \SG_{(r)}^\rho$. It then factors as $\sigma=\sigma_X\sigma_Y$ where \[\sigma_X=(1\,4\,2)(3\,5\,6)(7\,9)\] permutes elements within each block and \[\sigma_Y=(1\,3)(2\,5)(4\,6)\] swaps the two blocks of $\rho$ of size three. 
\end{exam}

Given $\pi\in\Pi_{2(r)}$, consider the subgroup $A_{\bfa,\bfb}^\pi$ of $X_\bfb^{\pi|_{\ovb{r}}}$ that only permutes blocks in $\pi|_{\ovb{r}}$ if they are part of blocks of $\pi$ that are painted identically by $\kappa_{\bfa,\bfb}$ (see Example \ref{ex:kappa}). Let $B_{\bfa,\bfb}^\nu$ be the analogous subgroup of $X_\bfb^{\nu|_{\ovb{r}}}$ for the restrictions to the top. More precisely,
\begin{align*}
    A^\pi_{\bfa,\bfb}&=\{\sigma\in X_\bfb^{\pi|_{\ovb{r}}}:\forall S,T\in\pi, S|_{\ovb{r}}=\sigma(T|_{\ovb{r}})\implies \kappa_{\bfa,\bfb}(S)=\kappa_{\bfa,\bfb}(T)\}\\
    B^\nu_{\bfb,\bfc}&=\{\sigma\in X_\bfb^{\nu|_{[r]}}:\forall S,T\in\nu, S|_{[r]}=\sigma(T|_{[r]})\implies \kappa_{\bfb,\bfc}(S)=\kappa_{\bfb,\bfc}(T)\}
\end{align*}

Put another way, these permutations $\sigma$ of the blocks in the middle row of $\pi\ast\nu$ do not change the resulting multiset partition. That is, $\kappa_{\bfa,\bfb,\bfc}(\pi\ast\nu)=\kappa_{\bfa,\bfb,\bfc}(\pi\ast\sigma.\nu)$ for $\sigma\in A_{\bfa,\bfb}^\pi$ or $\sigma\in B_{\bfb,\bfc}^\nu$.

Finally, we collect up formulas for the sizes of these subgroups. To that end, it will be useful to consider the following multiset partitions obtained by restricting to particular blocks.

\begin{align*}
    \tpi_+&=\{\tS\in\tpi:\forall i\in S,i\in[k]\}\\
    \tpi_-&=\{\tS\in\tpi:\forall i\in S,i\in\ovb{k}\}\\
    \tgamma_\pm&=\{\tS\in\tpi:\forall i\in S,i\in[k]\cup\ovovb{k}\}
\end{align*}

We think of $\tpi_+$ (resp. $\tpi_-$) as the blocks contained entirely in the top (resp. bottom) of $\tpi$ and $\tgamma_\pm$ as the blocks of $\tgamma$ that have no vertex in the middle row.

\begin{lemma}\label{lem:subgroup_sizes}
    Let $\bfa,\bfb,\bfc\in W_{r,k}$, $\pi,\nu\in\Pi_{2(r)}$, $\tpi=\kappa_{\bfa,\bfb}(\pi),$ and $\tnu=\kappa_{\bfb,\bfc}(\nu)$. Then, the subgroups defined above have sizes given by the following.

    \begin{align*}
        \begin{array}{ccc}
            \abs{X^\rho_\bfa}=m(\kappa_\bfa(\rho))! & \hspace{.5in} & \abs{Y^\rho_\bfa}=\prod_{\tB\in\tpi}m(\tB|_{\ovb{k}})! \\
            \abs{A^\pi_{\bfa,\bfb}}=\frac{m(\tpi)!}{m(\tpi_+)!} & & \abs{B^\nu_{\bfb,\bfc}}=\frac{m(\tnu)!}{m(\tnu_-)!}
        \end{array}
    \end{align*}

    Furthermore, for any $\tgamma\in\tilde\Gamma^\tpi_\tnu$, the size of the intersection of $A^\pi_{\bfa,\bfb}$ and $B^\nu_{\bfb,\bfc}$ is given by \begin{align*}
        \abs{A^\pi_{\bfa,\bfb}\cap B^\nu_{\bfb,\bfc}}&=\frac{m(\tgamma)!}{m(\tgamma_\pm)!}.
    \end{align*}
\end{lemma}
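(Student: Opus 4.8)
The plan is to realize each of these groups as built from two kinds of moves inside the ambient Young subgroup ($\SG_\bfa$ or $\SG_\bfb$): permuting entire blocks of the relevant set partition, and permuting elements within a single block, both required to preserve colors. For the first two formulas, note that a permutation in $\SG_\bfa$ fixing $\rho$ must carry each block of $\rho$ to a block with the same multiset of colors. The normal subgroup $Y^\rho_\bfa$ of permutations fixing every block setwise is the product, over blocks $S$ of $\rho$, of the color-preserving permutations of $S$, of which there are $m(\kappa_\bfa(S))!$; taking the product over all blocks gives $|Y^\rho_\bfa| = \prod_{S\in\rho}m(\kappa_\bfa(S))!$, which is the stated value once $\rho$ is specialized to $\pi|_{\ovb r}$ (or $\nu|_{[r]}$) with coloring $\bfb$. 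The complement $X^\rho_\bfa$ is in bijection with the set of admissible block permutations, and there are $m(\kappa_\bfa(\rho))!$ of those.

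For $|A^\pi_{\bfa,\bfb}|$ and $|B^\nu_{\bfb,\bfc}|$ I would use that each barred letter lies in a unique block, so $S\mapsto S|_{\ovb r}$ is a bijection from the blocks of $\pi$ meeting $\ovb r$ onto the blocks of $\pi|_{\ovb r}$. The defining condition of $A^\pi_{\bfa,\bfb}$ then says exactly that the admissible block permutations are those permuting, inside each fiber of $\kappa_{\bfa,\bfb}$, the $\pi$-blocks that share a common image $\tB\in\tpi$; the fiber of $\tB$ contributes $m_\tB(\tpi)!$ permutations when $\tB\notin\tpi_+$ and contributes nothing when $\tB\in\tpi_+$ (those blocks meet only $[r]$). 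Since $m_\tB(\tpi_+)=m_\tB(\tpi)$ for $\tB\in\tpi_+$, the resulting product is exactly $m(\tpi)!/m(\tpi_+)!$, and $|B^\nu_{\bfb,\bfc}|$ is the mirror image with $[r]$ and $\ovb r$, and $\tpi_+$ and $\tnu_-$, interchanged.

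Finally, for the intersection I would observe that any $\sigma\in A^\pi_{\bfa,\bfb}\cap B^\nu_{\bfb,\bfc}$ is a color-preserving permutation of the middle row that permutes whole blocks of both $\pi|_{\ovb r}$ and $\nu|_{[r]}$, hence whole blocks of the partition of the middle row induced by $\pi\ast\nu$ — which, by the identity $\Break(\gamma)=\pi\ast\nu$, is independent of $\gamma\in\Gamma^\pi_\nu$. Writing a middle block $C$ of $\pi\ast\nu$ as the union of the bottom parts of the $\pi$-blocks that make up its ambient $\gamma$-block $G$, the $A$-condition matches these $\kappa_{\bfa,\bfb}$-equivalently with the $\pi$-blocks making up the $\gamma$-block $G'$ whose middle part is $\sigma(C)$, so $\kappa_\bfa(G\cap[r])=\kappa_\bfa(G'\cap[r])$; the $B$-condition does the same for the double-barred parts, whence $\kappa_{\bfa,\bfb,\bfc}(G)=\kappa_{\bfa,\bfb,\bfc}(G')$. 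This gives $|A^\pi_{\bfa,\bfb}\cap B^\nu_{\bfb,\bfc}|\le m(\tgamma)!/m(\tgamma_\pm)!$, the bound coming from the product of $m_\tB(\tgamma)!$ over the middle-touching blocks of $\tgamma=\kappa_{\bfa,\bfb,\bfc}(\pi\ast\nu)$; for the reverse inequality one constructs, for each such admissible block permutation, an element of $A^\pi_{\bfa,\bfb}\cap B^\nu_{\bfb,\bfc}$ inducing it. Independence of the chosen $\tgamma\in\tilde\Gamma^\tpi_\tnu$ follows because all such $\tgamma$ have the same multiset of middle-touching blocks, namely $\kappa_{\bfa,\bfb,\bfc}$ applied to those of $\pi\ast\nu$.

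The main obstacle I anticipate is this last reverse inequality: making the count exact requires pinning down the chosen splitting $\SG^\rho_\bfa=X^\rho_\bfa Y^\rho_\bfa$ (say via an order-preserving convention within color classes) and checking that an admissible permutation of the middle blocks of $\pi\ast\nu$ lifts to a genuine element lying simultaneously in $X^{\pi|_{\ovb r}}_\bfb$ and $X^{\nu|_{[r]}}_\bfb$, and that this lift is unique — i.e.\ that the two ``whole-block'' conventions are mutually compatible on the join partition.
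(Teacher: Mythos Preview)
Your approach is essentially the paper's: identify $X^\rho_\bfa$ and $Y^\rho_\bfa$ directly, observe that only blocks of $\tpi$ meeting $\ovb k$ contribute to $A^\pi_{\bfa,\bfb}$ (and dually for $B$), and for the intersection recognize $A^\pi_{\bfa,\bfb}\cap B^\nu_{\bfb,\bfc}$ as the block-permutations of the middle row that respect the $\kappa_{\bfa,\bfb,\bfc}$-painting of $\pi\ast\nu$, then pass to any $\tgamma\in\tilde\Gamma^\tpi_\tnu$ since $\tgamma$ and $\kappa_{\bfa,\bfb,\bfc}(\pi\ast\nu)$ agree on the middle-touching blocks.

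Your stated obstacle is not one. For the statement about the intersection to be nonvacuous (i.e.\ for $\tilde\Gamma^\tpi_\tnu\neq\emptyset$ and for $\pi\ast\nu$ to exist), one has $\pi|_{\ovb r}=\ov{\nu|_{[r]}}$ as the \emph{same} set partition $\rho$, so $X_\bfb^{\pi|_{\ovb r}}$ and $X_\bfb^{\nu|_{[r]}}$ are literally the same subgroup $X_\bfb^\rho$ of $\SG_\bfb$; there are not two conventions to reconcile, and the lift of an admissible block permutation to $X_\bfb^\rho$ is unique by the semidirect product splitting. Relatedly, your phrasing ``the partition of the middle row induced by $\pi\ast\nu$'' and ``the union of the bottom parts of the $\pi$-blocks that make up its ambient $\gamma$-block'' overcomplicates the picture: since each block of $\pi$ (resp.\ $\nu$) meeting the middle row meets it in a \emph{single} block of $\rho$, every $(\pi\ast\nu)$-block touching the middle contains exactly one block of $\rho$, and the induced middle partition is $\rho$ itself. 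With that in hand, both inequalities collapse to the direct count the paper gives: the intersection is in bijection with permutations of the blocks $M_i$ of $\rho$ that stay within the fibers of $M_i\mapsto\kappa_{\bfa,\bfb,\bfc}(P_i\cup N_i)$, of which there are $\prod_{\tS}m_\tS(\kappa_{\bfa,\bfb,\bfc}(\pi\ast\nu))!$ over middle-touching $\tS$, i.e.\ $m(\tgamma)!/m(\tgamma_\pm)!$.
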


\begin{proof}
    The equalities in the first row are clear. The next two follow from the observation that the only blocks of $\tpi$ that contribute to $A_{\bfa,\bfb}^\pi$ are the ones that touch the bottom, so we cancel out the contribution of those contained entirely in the top.

    For the last equality, we can think of $A_{\bfa,\bfb}^\pi\cap B^\nu_{\bfb,\bfc}$ as the permutations of the vertices in the middle row of $\pi\ast\nu$ that only permute blocks that are the restrictions of blocks painted the same in $\kappa_{\bfa,\bfb,\bfc}(\pi\ast\nu)$. The number of such permutations is $\frac{m(\kappa_{\bfa,\bfb,\bfc}(\pi\ast\nu))!}{m(\kappa_{\bfa,\bfb,\bfc}(\pi\ast\nu)_\pm)!}$. Because $\tgamma$ only differs from $\kappa_{\bfa,\bfb,\bfc}(\pi\ast\nu)$ by blocks that do not touch the center (whose contributions are all canceled) we can make the substitution of $\tgamma$ for $\kappa_{\bfa,\bfb,\bfc}(\pi\ast\nu)$.
\end{proof}

\subsection{Proof of the Isomorphism}

\begin{proof}[Proof of Theorem \ref{thm:change_of_basis}]
    Let $\tpi,\tnu\in\tilde\Pi_{2(r)}$ and let $\bfa,\bfb,\bfb',\bfc\in W_{r,k}$ be such that there exist $\pi,\nu\in\Pi_{2(r)}$ so that $\kappa_{\bfa,\bfb}(\pi)=\tpi$ and $\kappa_{\bfb',\bfc}(\nu)=\tnu$. Note that when $\bfb\neq\bfb'$, we have $\OO_\tpi\OO_\tnu=0$ and $\XX_\tpi \XX_\tnu=0$, so we need only address the case when $\bfb=\bfb'$:

    \begin{align*}
        \OO_\tpi\OO_\tnu&=\frac{1}{\abs{\SG_\bfb}}\sum_{\sigma\in\SG_\bfb}s_\bfa\TT_\pi\TT_{\sigma.\nu}s_\bfc\\
        &=\frac{1}{\abs{\SG_\bfb}}\sum_{\sigma\in\SG_\bfb}\sum_{\gamma\in\Gamma^\pi_{\sigma.\nu}} b_\gamma(x) s_\bfa \TT_\gamma s_\bfb\\
        \intertext{To simplify notation, we will write $\tgamma=\kappa_{\bfa,\bfb,\bfc}(\gamma)$. Note that $b_\gamma(x)=\tilde{b}_{\tgamma}(x)m(\beta_\tgamma)!$, so we can rewrite the expression as follows.}
        \OO_\tpi\OO_\tnu&=\frac{1}{\abs{\SG_\bfb}}\sum_{\sigma\in\SG_\bfb}\sum_{\gamma\in\Gamma^\pi_{\sigma.\nu}} \tilde{b}_\tgamma(x)m(\beta_\tgamma)! \OO_{\tgamma}\\
        \intertext{We then partition the sum over the possible multiset partitions $\tmu$ that could arise from $\gamma$ in this sum, noting that $\kappa_{\bfb,\bfc}(\sigma.\nu)=\tnu$, and then we swap the order of summation to obtain}
        \OO_\tpi\OO_\tnu&=\frac{1}{\abs{\SG_\bfb}}\sum_{\tmu\in\tilde\Gamma_{\tnu}^\tpi} \tilde{b}_\tmu(x)m(\beta_\tmu)!\left(\sum_{\sigma\in\SG_\bfb}\sum_{\substack{\gamma\in\Gamma^\pi_{\sigma.\nu}\\\tgamma=\tmu}}1\right)\OO_\tmu\\
        &=\frac{1}{\abs{\SG_\bfb}}\sum_{\tmu\in\tilde\Gamma_{\tnu}^\tpi} \tilde{b}_\tmu(x)m(\beta_\tmu)!\left(\sum_{\sigma\in\SG_\bfb}\abs{\Gamma^\pi_{\sigma.\nu}(\tmu)}\right)\OO_\tmu.
    \end{align*}
    \begin{align}
        \intertext{Recall that $\varphi(\OO_\tpi)=\omega(\tpi)X_\tpi$. Applying $\varphi$ to both sides of the above equation and writing $\varphi(\OO_\tpi\OO_\tnu)|_{\XX_\ttau}$ for the coefficient of $\XX_\ttau$ in $\varphi(\OO_\tpi\OO_\tnu)$ then yields}
        \varphi(\OO_\tpi\OO_\tnu)|_{\XX_\ttau}&=\frac{\omega(\ttau)}{\abs{\SG_\bfb}}\sum_{\substack{\tmu\in\Gamma^\tpi_\tnu\\\tmu|_{[k]\cup\ovovb{k}}=\ttau}}\tilde{b}_\tmu(x)m(\beta_\tmu)!\left(\sum_{\sigma\in\SG_\bfb}\abs{\Gamma^\pi_{\sigma.\nu}(\tmu)}\right).\label{eq:phi}\\
        \intertext{We now compare this to the same coefficient in $\varphi(\OO_\tpi)\varphi(\OO_\tnu)$:}
        \varphi(\OO_\tpi)\varphi(\OO_\tnu)|_{\XX_\ttau}&=\omega(\tpi)\omega(\tnu)\sum_{\substack{\tmu\in\tilde\Gamma^\tpi_\tnu\\\tmu|_{[k]\cup\ovovb{k}=\ttau}}}a_\tmu \tilde{b}_\tmu(x)\label{eq:phiphi}
    \end{align}

    The goal is now to show that the rather unpleasant expressions given in Equation \ref{eq:phi} for $\varphi(\OO_\tpi\OO_\tnu)$ and Equation \ref{eq:phiphi} for $\varphi(\OO_\tpi)\varphi(\OO_\tnu)$ are equal. We will leverage their similarities---namely that they sum over the same objects and each include a factor of $\tilde{b}_\tmu(x)$ in each summand---to simplify the task. It would suffice to show the following equality:

    \begin{align*}
        \frac{\omega(\ttau)}{\abs{\SG_\bfb}}m(\beta_\tmu)!\left(\sum_{\sigma\in\SG_\bfb}\abs{\Gamma^\pi_{\sigma.\nu}(\tmu)}\right)&=\omega(\tpi)\omega(\tnu)a_\tmu
    \end{align*}

    By using the definition of $\omega(\tpi)$ and noticing that $\ttau|_{\ovb{k}}=\tnu|_{\ovb{k}}$, we are able to rearrange the above equality to

    \begin{align*}
        \sum_{\sigma\in\SG_\bfb}\abs{\Gamma^\pi_{\sigma.\nu}(\tmu)}&=a_\tmu\frac{m(\tpi)!m(\tnu)!}{m(\ttau)!m(\beta_\tmu)!}\prod_{\tB\in\tpi}m(\tB|_{\ovb{k}})!.
    \end{align*}
    Then using the formulas in Lemma \ref{lem:subgroup_sizes} for the sizes of the subgroups, we rewrite the right-hand side to obtain
    \begin{align*}
        \sum_{\sigma\in\SG_\bfb}\abs{\Gamma^\pi_{\sigma.\nu}(\tmu)}&=a_\tmu\frac{m(\tpi_+)!m(\tnu_-)!}{m(\ttau)!m(\beta_\tmu)!}\frac{m(\tmu)!}{m(\tmu_\pm)!}\frac{\abs{A^\pi_{\bfa,\bfb}}\abs{B^\nu_{\bfb,\bfc}}\abs{Y_\bfb^{\pi|_{\ovb{r}}}}}{\abs{A_{\bfa,\bfb}^\pi\cap B_{\bfb,\bfc}^\nu}}\\
        &=a_\tmu\frac{m(\tpi_+)!m(\tnu_-)!}{m(\ttau)!m(\beta_\tmu)!}\frac{m(\tmu)!}{m(\tmu_\pm)!}\abs{A_{\bfa,\bfb}^\pi B_{\bfb,\bfc}^\nu Y_\bfb^{\pi|_{\ovb{r}}}}\\
        &=\left(a_\tmu\frac{m(\tmu)!}{m(\ttau)!m(\beta_\tmu)!}\right)\frac{m(\tpi_+)!m(\tnu_-)!}{m(\tmu_\pm)!}
    \end{align*} where the second equality follows from the fact that $A_{\bfa,\bfb}^\pi$ and $B_{\bfb,\bfc}^\nu$ are subgroups of $X_\bfb^{\pi|_{\ovb{r}}}$, which intersects trivially with $Y_\bfb^{\pi|_{\ovb{r}}}$.
    
    Finally, we turn our attention to the factor in parentheses in the above expression. Because $\ttau=\tmu|_{\ovb{k}\cup\ovovb{k}}$, we have for each block $\tS\in\tmu|_{\ovb{k}\cup\ovovb{k}}$ that $\ell(\tmu_{\tS})=m_{\tS}(\ttau)$. This fact allows us to cancel the $\ell(\tmu_\tS)!$ terms in the definition of $a_\tmu$ with $m(\ttau)!$ to get
    \begin{align*}
        a_\tmu\frac{m(\tmu)!}{m(\ttau)!m(\beta_\tmu)!}&=\left(\prod_{\substack{\tS\in\tmu|_{[k]\cup\ovovb{k}}\\\text{distinct}}}\frac{\ell(\tmu_\tS)!}{m(\tmu_\tS)!}\right)\frac{m(\tmu)!}{m(\ttau)!m(\beta_\tmu)!}\\
        &=\left(\prod_{\substack{\tS\in\tmu|_{[k]\cup\ovovb{k}}\\\text{distinct}}}\frac{1}{m(\tmu_\tS)!}\right)\frac{m(\tmu)!}{m(\beta_\tmu)!}.
    \end{align*} 

    Note that the terms $m(\tmu_\tS)!$ contribute $m_\tU(\tmu)!$ to the denominator for each block $\tU\in\tmu$ which contains either an unbarred or a double-barred element. Furthermore, $m(\beta_\tmu)!$ contributes a $m_\tU(\tmu)!$ to the denominator for each block $\tU\in\tmu$ consisting only of barred elements. In total, this cancels the contribution of $m(\tmu)!$, so \begin{align*}
        a_\tmu\frac{m(\tmu)!}{m(\ttau)!m(\beta_\tmu)!}&=1
    \end{align*}

    Hence, $\varphi$ is an isomorphism of algebras if the equality \[\sum_{\sigma\in\SG_\bfb}\abs{\Gamma^\pi_{\sigma.\nu}(\tmu)}=\frac{m(\tpi_+)m(\tnu_-)}{m(\tmu_\pm)}{\abs{A_{\bfa,\bfb}^\pi B_{\bfb,\bfc}^\nu Y_{\bfb}^{\pi|_{\ovb{r}}}}}\] holds. The proof of this equality will be carried out in two lemmas. First, Lemma \ref{lem:number_of_sigma} will show that the set of $\sigma$ such that $\abs{\Gamma_{\sigma.\nu}^\pi(\tmu)}\neq0$ is given by a translation of the product $A_{\bfa,\bfb}^\pi B_{\bfb,\bfc}^{\nu'} Y_\bfb^{\pi|_{\ovb{r}}}$ where $\kappa_{\bfb,\bfc}(\nu')=\kappa_{\bfb,\bfc}(\nu)$ so $B_{\bfb,\bfc}^{\nu'}\cong B_{\bfb,\bfc}^{\nu}$. Finally, Lemma \ref{lem:size_of_gamma} will show that for each such $\sigma$, \begin{align*}
        \abs{\Gamma_{\sigma.\nu}^\pi(\tmu)}=\frac{m_+(\tpi)m_-(\tnu)}{m_\pm(\tmu)}.
    \end{align*} Because this quantity is independent of $\sigma$, the value of the sum is simply the product of the quantity $\frac{m_+(\tpi)m_-(\tnu)}{m_\pm(\tmu)}$ with the number of nonzero summands, given by $\abs{A_{\bfa,\bfb}^\pi B_{\bfb,\bfc}^{\nu'} Y_\bfb^{\pi|_{\ovb{r}}}}$.
\end{proof}

To set the stage for the final two lemmas, we first need to investigate a particular class of permutations in $\sigma\in\SG_\bfb^\rho$.

\begin{lemma}\label{moveMiddleWithoutChangingMSP} Let $\pi, \nu\in\Pi_{2(r)}$ such that $\pi|_{\ovb{r}}=\ov\nu|_{\ovb{r}}=\rho$ and fix $\bfa,\bfb,\bfc\in W_{r,k}$. Then \[\{\sigma\in \SG_\bfb^\rho:\kappa_{\bfa,\bfb,\bfc}(\pi\ast \sigma.\nu)=\kappa_{\bfa,\bfb,\bfc}(\pi\ast\nu)\}=A_{\bfa,\bfb}^\pi B_{\bfb,\bfc}^{\nu}Y_{\bfb}^\rho.\]
\end{lemma}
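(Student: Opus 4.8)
The idea is to move from permutations of the middle row to permutations of the \emph{blocks} of $\rho$. Let $\mathcal{C}$ be the set of blocks of $\rho$. Since $\rho=\pi|_{\ovb r}=\ov\nu|_{\ovb r}$, two middle vertices lie in a common block of $\pi$ if and only if they lie in a common block of $\nu$, if and only if they lie in the same block of $\rho$; consequently distinct blocks of $\rho$ never meet a common block of $\pi$ or of $\nu$, so each $C\in\mathcal{C}$ determines a unique block of $\pi$ (with top part $t_\pi(C)\subseteq[r]$) and a unique block of $\nu$ (with bottom part lying in $\ovb r$, so after a second bar in $\ovovb r$). Every $\sigma\in\SG_\bfb^\rho$ permutes $\mathcal{C}$, giving a homomorphism $\phi\colon\SG_\bfb^\rho\to\SG_{\mathcal{C}}$ with $\ker\phi=Y_\bfb^\rho$ and image the colour-multiset-preserving permutations of $\mathcal{C}$; in particular $\phi$ identifies $X_\bfb^\rho$ with this image. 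Unwinding the definitions gives $\phi(A_{\bfa,\bfb}^\pi)=\{\tau:p\circ\tau=p\}$ and $\phi(B_{\bfb,\bfc}^\nu)=\{\tau:q\circ\tau=q\}$, where $p(C)$ is the $\kappa_{\bfa,\bfb}$-colouring of the block of $\pi$ containing $C$ and $q(C)$ is the $\kappa_{\bfb,\bfc}$-colouring of the block of $\nu$ containing $C$ (each such condition on $\tau$ already forces $\tau$ to preserve colour multisets). Since $A_{\bfa,\bfb}^\pi B_{\bfb,\bfc}^\nu Y_\bfb^\rho$ contains $\ker\phi$ and, using normality of $Y_\bfb^\rho$, is a union of $\ker\phi$-cosets, it equals $\phi^{-1}\big(\phi(A_{\bfa,\bfb}^\pi)\,\phi(B_{\bfb,\bfc}^\nu)\big)$, so it suffices to identify $\{\sigma\in\SG_\bfb^\rho:\kappa_{\bfa,\bfb,\bfc}(\pi\ast\sigma.\nu)=\kappa_{\bfa,\bfb,\bfc}(\pi\ast\nu)\}$ with this preimage.

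The second step is to compute $\kappa_{\bfa,\bfb,\bfc}(\pi\ast\sigma.\nu)$ in terms of $\tau=\phi(\sigma)$. Because distinct blocks of $\rho$ never meet a common block of $\pi$ or of $\nu$, the blocks of the three-row set partition $\pi\ast\sigma.\nu$ are exactly $t_\pi(C)\cup C\cup(\text{bottom part of the }\nu\text{-block of }\tau^{-1}C)$ for $C\in\mathcal{C}$, together with the all-top blocks of $\pi$ and the all-bottom blocks of $\nu$, which $\sigma$ leaves untouched; in particular $\pi\ast\sigma.\nu$ depends on $\sigma$ only through $\tau$, and any internal reshuffling of a $\rho$-block leaves even the \emph{uncoloured} diagram unchanged (this is why $Y_\bfb^\rho$ always lies in the left-hand set). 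Colouring, and using that $\tau$ preserves colour multisets so that the middle part of $C$ has the same $\kappa_\bfb$-colouring as that of $\tau^{-1}C$, the coloured block through $C$ is determined, bijectively, by the pair $\big(p(C),\,q(\tau^{-1}C)\big)$: the unbarred and barred parts recover $p(C)$, while the double-barred part together with the barred part recovers $q(\tau^{-1}C)$. Hence $\kappa_{\bfa,\bfb,\bfc}(\pi\ast\sigma.\nu)=\kappa_{\bfa,\bfb,\bfc}(\pi\ast\nu)$ if and only if the multisets $\big\{(p(C),q(\tau^{-1}C)):C\in\mathcal{C}\big\}$ and $\big\{(p(C),q(C)):C\in\mathcal{C}\big\}$ coincide.

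The last step is the purely combinatorial equivalence: for a colour-preserving permutation $\tau$ of $\mathcal{C}$, this multiset equality holds if and only if $\tau\in\overline A\,\overline B$, where $\overline A=\{\alpha:p\alpha=p\}=\phi(A_{\bfa,\bfb}^\pi)$ and $\overline B=\{\beta:q\beta=q\}=\phi(B_{\bfb,\bfc}^\nu)$. For one direction, if $\tau=\alpha\beta$ then $q(\tau^{-1}C)=q(\alpha^{-1}C)$, and reindexing $C\mapsto\alpha C$ turns $\big\{(p(C),q(\alpha^{-1}C))\big\}$ into $\big\{(p(\alpha C),q(C))\big\}=\big\{(p(C),q(C))\big\}$. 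For the other direction, a multiset equality of this shape is witnessed by some $\gamma\in\SG_{\mathcal{C}}$ with $p(\gamma C)=p(C)$ and $q(\tau^{-1}\gamma C)=q(C)$ for all $C$; then $\gamma\in\overline A$ and $\tau^{-1}\gamma\in\overline B$, so $\tau^{-1}=(\tau^{-1}\gamma)\gamma^{-1}\in\overline B\,\overline A$ and hence $\tau\in\overline A\,\overline B$. Combining the three steps yields the lemma. I expect the main obstacle to be the second step: pinning down the block structure of $\pi\ast\sigma.\nu$ and the bijection between a coloured three-row block and the pair $(p(C),q(\tau^{-1}C))$, keeping the three alphabets $[k]$, $\ovb k$, $\ovovb k$ straight, and making sure the convention for the action $\sigma.\nu$ is the one that produces the factors in the order $A_{\bfa,\bfb}^\pi B_{\bfb,\bfc}^\nu Y_\bfb^\rho$ (this set is a coset-product and need not be a subgroup, so the order genuinely matters).
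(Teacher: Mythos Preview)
Your proposal is correct and follows essentially the same strategy as the paper's proof: factor $\sigma=\sigma_X\sigma_Y$ to strip off the harmless $Y_\bfb^\rho$ part, then argue on permutations of the blocks of $\rho$, and for the hard containment produce a witnessing permutation (your $\gamma$, the paper's $\eta$) that lies in $A_{\bfa,\bfb}^\pi$ and satisfies $\gamma^{-1}\tau\in\phi(B_{\bfb,\bfc}^\nu)$.

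The only real difference is packaging. The paper carries out the block-permutation argument concretely via the indexed sets $W_{\sigma,\tS}=\{i:\kappa_{\bfa,\bfb,\bfc}(\sigma(S_i))=\tS\}$, verifying two properties (equal unions over $\tS$ restricting to a fixed $\tR$, and equal cardinalities $|W_{\sigma_X,\tS}|=|W_{1,\tS}|$) that together guarantee the existence of $\eta$. Your formulation---passing through the homomorphism $\phi:\SG_\bfb^\rho\to\SG_{\mathcal C}$, encoding each middle-touching coloured block by the pair $(p(C),q(\tau^{-1}C))$, and reducing to the multiset identity $\multi{(p(C),q(\tau^{-1}C))}=\multi{(p(C),q(C))}$---is a cleaner abstraction of exactly the same bookkeeping: your multiset equality is precisely what the paper's two properties of the $W$-sets express, and your $\gamma$ is the paper's $\eta$. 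Your observation that $p\alpha=p$ already forces $\alpha$ to preserve colour multisets (since $p(C)|_{\ovb k}$ recovers the colour multiset of $C$) neatly handles the point that $\overline A,\overline B\subseteq\phi(X_\bfb^\rho)$, which the paper leaves implicit. The concern you flag about the order of factors is real but you resolve it correctly; the paper's convention for $\sigma.\nu$ matches yours, and both arrive at $A_{\bfa,\bfb}^\pi B_{\bfb,\bfc}^\nu Y_\bfb^\rho$.
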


\begin{proof}
First, note that $\sigma$ factors as $\sigma=\sigma_X\sigma_Y$ for $\sigma_X\in X_\bfm^\rho$ and $\sigma_Y\in Y_\bfm^\rho$. Because $\sigma_Y.\nu=\nu$ for all $\nu$, we need only determine which $\sigma_X$ can be factored into a product of an element of $A_{\bfa,\bfb}^\pi$ and an element of $B_{\bfb,\bfc}^\nu$.

One containment is straightforward. Suppose $\sigma_X=\sigma_A\sigma_B$ with $\sigma_A\in A_{\bfa,\bfb}^{\pi}$ and $\sigma_B\in B_{\bfb,\bfc}^{\nu}$. Consider a block in $\pi\ast\nu$. Although the bottom half of this block may be different in $\pi\ast \sigma_B.\nu$, the condition that $\sigma_B\in B_{\bfb,\bfc}^{\nu}$ guarantees that it is not different in $\kappa_{\bfa,\bfb,\bfc}(\pi\ast \sigma_B.\nu)$. Hence, $\kappa_{\bfa,\bfb,\bfc}(\pi\ast \sigma_B.\nu)=\kappa_{\bfa,\bfb,\bfc}(\pi\ast\nu)$ for any $\pi\in\Pi_{2(r)}$ with $\pi|_{\ovb{r}}=\ov\nu|_{\ovb{r}}$. Analogously, we see that $\kappa_{\bfa,\bfb,\bfc}(\pi.{\sigma_A}^{-1}\ast \nu)=\kappa_{\bfa,\bfb,\bfc}(\pi\ast\nu)$ for any $\nu\in\Pi_{2(r)}$ with $\pi|_{\ovb{r}}=\ov\nu|_{\ovb{r}}$. Hence,
\begin{align*}
    \kappa_{\bfa,\bfb,\bfc}(\pi\ast \sigma_A\sigma_B.\nu)&=\kappa_{\bfa,\bfb,\bfc}(\pi.{\sigma_A}^{-1}\ast \sigma_B.\nu)\\
    &=\kappa_{\bfa,\bfb,\bfc}(\pi.{\sigma_A}^{-1}\ast \nu)\\
    &=\kappa_{\bfa,\bfb,\bfc}(\pi\ast\nu).
\end{align*}
For the other containment, suppose that $\kappa_{\bfa,\bfb,\bfc}(\pi\ast\nu)=\kappa_{\bfa,\bfb,\bfc}(\pi\ast \sigma_X.\nu)=\tmu$. We use the following convention for indexing the blocks of $\pi\ast\nu$. Write $\rho=\{M_1<\dots< M_\ell\}$ for the restrictions of the blocks of $\pi\ast\nu$ to $\ovb{r}$ in last-letter order, and write $S_i$ for the block of $\pi\ast\nu$ with $M_i\subseteq S_i$. An element $\sigma\in X_\bfm^\rho$ permutes the blocks of $\rho$ and hence the indices $[\ell]$. For $S_i\in\pi\ast\nu$, it will be helpful to write $\sigma(S_i)$ for the block in $\pi\ast \sigma.\nu$ such that $S_i|_{[r]\cup\ovb{r}}=\sigma(S_i)|_{[r]\cup\ovb{r}}$. Equivalently, $\sigma(S_i)$ is obtained by replacing the bottom row $S_i|_{\ovovb{r}}$ with $S_{\sigma^{-1}(i)}|_{\ovovb{r}}$.

For a fixed $\sigma\in X_\bfb^\rho$ and $\tS$ a multiset from $[k]\cup\ovb{k}\cup\ovovb{k}$, write \[W_{\sigma,\tS}=\{i:\kappa_{\bfa,\bfb,\bfc}(\sigma(S_i))=\tS\}.\]

For an example of these sets, see Example \ref{ex:final_lemmas}. The following two properties can be observed in this example---we show that they hold in general.

\begin{enumerate}
    \item[1.] For a fixed $\tR\in\kappa_{\bfa,\bfb}(\pi)$ such that $\tR\cap \ovb{k}\neq\emptyset$,\begin{align*}
    \biguplus_{\substack{\tS\in\tmu\\\tS|_{[k]\cup\ovb{k}}=\tR}}W_{1,\tS}=\biguplus_{\substack{\tS\in\tmu\\\tS|_{[k]\cup\ovb{k}}=\tR}}W_{\sigma_X,\tS}.
\end{align*}

This follows just about immediately from the fact that $\sigma(S_i)|_{[r]\cup\ovb{r}}=S_i|_{[r]\cup\ovb{r}}$ for any $\sigma\in \SG_\bfb^\rho$. Furthermore,

\begin{align*}
    \biguplus_{\substack{\tS\in\tmu\\\tS|_{[k]\cup\ovb{k}}=\tR}}W_{\sigma_X,\tS}&=\left\{i:\kappa_{\bfa,\bfb,\bfc}(\sigma_X(S_i))|_{[k]\cup\ovb{k}}=\tR\right\}\\
    &=\left\{i:\kappa_{\bfa,\bfb,\bfc}(S_i)|_{[k]\cup\ovb{k}}=\tR\right\}\\
    &=\biguplus_{\substack{\tS\in\tmu\\\tS|_{[k]\cup\ovb{k}}=\tR}}W_{1,\tS}.
\end{align*}

Note that the assumption that $\kappa_{\bfa,\bfb,\bfc}(\pi\ast\nu)=\kappa_{\bfa,\bfb,\bfc}(\pi\ast \sigma_X.\nu)=\tmu$ is necessary here so that the unions on either side of the equality are over the same set of $\tS$.

\item[2.] For a fixed $\tS\in\tmu$ with $\tS\cap\ovb{k}\neq\emptyset$, \begin{align*}
    |W_{\sigma_X,\tS}|=|W_{1,\tS}|.
\end{align*}

For $\sigma\in \SG_\bfb^\rho$ and $\tS\in\kappa_{\bfa,\bfb,\bfc}(\pi\ast\sigma.\nu)$ with $\tS\cap\ovb{k}\neq\emptyset$, we have
\begin{align*}
    \abs{W_{\sigma,\tS}}&=\abs{\left\{i:\kappa_{\bfa,\bfb,\bfc}(\sigma(S_i))=\tS\right\}}\\
    &=m_{\tS}(\kappa_{\bfa,\bfb,\bfc}(\pi\ast\sigma.\nu)).
\end{align*}

The statement then follows from the assumption that $\kappa_{\bfa,\bfb,\bfc}(\pi\ast\nu)=\kappa_{\bfa,\bfb,\bfc}(\pi\ast \sigma_X.\nu)=\tmu$.
\end{enumerate}

These two facts allow us to construct a permutation in the following way. Fixing $\tR\in\kappa_{\bfa,\bfb}(\pi)$ such that $\tR\cap \ovb{k}\neq\emptyset$, there exists a permutation $\eta_{\tR}$ of $\{i:\kappa_{\bfa,\bfb,\bfc}(S_i)|_{[k]\cup\ovb{k}}=\tR\}$ such that $\eta_\tR(W_{\sigma_X,\tS})=W_{1,\tS}$ for all $\tS$ with $\tS|_{[k]\cup\ovb{k}}=\tR$. Because $\eta_\tR$ by definition permutes only blocks that restrict to the same block in $\kappa_{\bfa,\bfb}(\pi)$, we see $\eta_\tR$ is an element of $A_{\bfa,\bfb}^\pi$. Now define $\eta\in A_{\bfa,\bfb}^\pi$ by \begin{align*}
    \eta:=\prod_{\substack{\tR\in\kappa_{\bfa,\bfb}(\pi)\\\tR\cap\ovb{k}\neq\emptyset}}\eta_\tR
\end{align*} with the product taken in any order.

It remains only to show that $\eta\sigma_X\in B_{\bfm,\bfb}^\nu$. Fix $i\in[\ell]$ and let $\tS=\kappa_{\bfa,\bfb,\bfc}(\eta\sigma_X(S_i))$. Then,

\begin{align*}
    \tS&=\kappa_{\bfa,\bfb,\bfc}(\eta\sigma_X(S_i))\\
    &=\kappa_{\bfa,\bfb}(S_i|_{[r]\cup\ovb{r}})\cup\kappa_\bfc(S_{(\eta\sigma_X)^{-1}(i)}).\\
    \intertext{Then by the fact that $\eta\in A_{\bfa,\bfb}^\pi$,}
    \tS&=\kappa_{\bfa,\bfb}(S_{\eta^{-1}(i)}|_{[r]\cup\ovb{r}})\cup\kappa_\bfc(S_{{\sigma_X}^{-1}(\eta^{-1}(i))})\\
    &=\kappa_{\bfa,\bfb,\bfc}(\sigma_X(S_{\eta^{-1}(i)})).
\end{align*}

Then $\eta^{-1}(i)\in W_{\sigma_X,\tS}$, so $i\in \eta(W_{\sigma_X,\tS})=W_{1,\tS}$. Hence, $\tS_i=\tS=\kappa_{\bfa,\bfb,\bfc}(\eta\sigma_X(S_i))$ for each $i$ and so $\eta\sigma_X\in B^{\pi}_{\bft,\bfm}$, meaning $\sigma_X\in A_{\bft,\bfm}^\pi B_{\bfm,\bfb}^\nu$. Thus $\sigma=\sigma_X\sigma_Y\in A_{\bft,\bfm}^\pi B_{\bfm,\bfb}^\nu Y_\bfm^\rho$.
\end{proof}

\begin{exam}\label{ex:final_lemmas} Here, we provide an example of the sets $W_{\sigma,\tS}$ defined in Lemma \ref{moveMiddleWithoutChangingMSP}. This definition relies on an (arbitrary) ordering of the blocks of $\pi\ast\nu$ which contain a barred element (i.e. touch the middle row in the diagram). In this example, we order these blocks by their restriction to the middle row, and we label them as-such in the following diagram.

\begin{align*}
    \pi\ast\nu&=\begin{array}{c}
            \begin{tikzpicture}[xscale=.75,yscale=.75,line width=1.25pt] 
                \foreach \i in {1,2,3,4,5,6,7,8,9,10,11}  { \path (\i,1.25) coordinate (T\i); \path (\i,.25) coordinate (M\i); \path (\i,-.75) coordinate (B\i); } 
                \filldraw[fill= black!12,draw=black!12,line width=4pt]  (T1) -- (T11) -- (B11) -- (B1) -- (T1);
                \draw[black] (M1)--(T1)--(T2)--(M3);
                \draw[black] (M2)--(T3)--(T4)--(M4);
                \draw[black] (T5)--(M5);
                \draw[black] (T6)--(M6);
                \draw[black] (T7)--(M7);
                \draw[black] (M8)--(T8) .. controls +(0,-0.6) and +(0,-0.6) .. (T10);
                \draw[black] (M9)--(T9) .. controls +(0,-0.6) and +(0,-0.6) .. (T11);
                \draw[black] (B1)--(M1) .. controls +(0,-0.4) and +(0,-0.4) .. (M3);
                \draw[black] (M2)--(B2)--(B3)--(M4);
                \draw[black] (M5)--(B5)--(B4);
                \draw[black] (M6)--(B6)--(B7);
                \draw[black] (M7)--(B8);
                \draw[black] (M8)--(B9);
                \draw[black] (B10)--(B11);
                \colortop{1,2,3,4,5,6,7,8,9,10,11}{black};
                \colormid{1,2,3,4,5,6,7,8,9,10,11}{black};
                \colorbot{1,2,3,4,5,6,7,8,9,10,11}{black};
                \node at (3.25,.5) {1};
                \node at (4.25,.5) {2};
                \node at (5.25,.5) {3};
                \node at (6.25,.5) {4};
                \node at (7.25,.5) {5};
                \node at (8.25,.5) {6};
                \node at (9.25,.5) {7};
                \node at (10.25,.5) {8};
                \node at (11.25,.5) {9};
            \end{tikzpicture}
        \end{array}
\end{align*}

Letting $\sigma_X=(1\,2)(3\,5\,6)$, we similarly label the diagram for $\pi\ast\sigma_X.\nu$.

\begin{align*}
    \pi\ast\sigma_X.\nu&=\begin{array}{c}
            \begin{tikzpicture}[xscale=.75,yscale=.75,line width=1.25pt] 
                \foreach \i in {1,2,3,4,5,6,7,8,9,10,11}  { \path (\i,1.25) coordinate (T\i); \path (\i,.25) coordinate (M\i); \path (\i,-.75) coordinate (B\i); } 
                \filldraw[fill= black!12,draw=black!12,line width=4pt]  (T1) -- (T11) -- (B11) -- (B1) -- (T1);
                \draw[black] (M1)--(T1)--(T2)--(M3);
                \draw[black] (M2)--(T3)--(T4)--(M4);
                \draw[black] (T5)--(M5);
                \draw[black] (T6)--(M6);
                \draw[black] (T7)--(M7);
                \draw[black] (M8)--(T8) .. controls +(0,-0.6) and +(0,-0.6) .. (T10);
                \draw[black] (M9)--(T9) .. controls +(0,-0.6) and +(0,-0.6) .. (T11);
                \draw[black] (B1)--(M2) .. controls +(0,-0.4) and +(0,-0.4) .. (M4);
                \draw[black] (M1)--(B2)--(B3)--(M3);
                \draw[black] (M7)--(B5)--(B4);
                \draw[black] (M6)--(B6)--(B7);
                \draw[black] (M8)--(B8);
                \draw[black] (M5)--(B9);
                \draw[black] (B10)--(B11);
                \colortop{1,2,3,4,5,6,7,8,9,10,11}{black};
                \colormid{1,2,3,4,5,6,7,8,9,10,11}{black};
                \colorbot{1,2,3,4,5,6,7,8,9,10,11}{black};
                \node at (3.25,.5) {1};
                \node at (4.25,.5) {2};
                \node at (5.25,.5) {3};
                \node at (6.25,.5) {4};
                \node at (7.25,.5) {5};
                \node at (8.25,.5) {6};
                \node at (9.25,.5) {7};
                \node at (10.25,.5) {8};
                \node at (11.25,.5) {9};
            \end{tikzpicture}
        \end{array}
\end{align*}

We can now read off the blocks $S_2=\{3,4,\ov2,\ov4,\ovov2,\ovov3\}$ and $\sigma_X(S_2)=\{3,4,\ov2,\ov4,\ovov1\}$ as the blocks labeled two in the first and second diagram respectively. Now we apply the above labels to the diagrams of $\kappa_{\bfa,\bfb,\bfc}(\pi\ast\nu)$ and $\kappa_{\bfa,\bfb,\bfc}(\pi\ast \sigma_X.\nu)$ where $\bfa=(7,2,2)$, $\bfb=(2,7,2)$, and $\bfc=(2,5,4)$.

\begin{align*}
    \kappa_{\bfa,\bfb,\bfc}(\pi\ast\nu)&=\begin{array}{c}
            \begin{tikzpicture}[xscale=.75,yscale=.75,line width=1.25pt] 
                \foreach \i in {1,2,3,4,5,6,7,8,9,10,11}  { \path (\i,1.25) coordinate (T\i); \path (\i,.25) coordinate (M\i); \path (\i,-.75) coordinate (B\i); } 
                \filldraw[fill= black!12,draw=black!12,line width=4pt]  (T1) -- (T11) -- (B11) -- (B1) -- (T1);
                \draw[black] (M1)--(T1)--(T2)--(M3);
                \draw[black] (M2)--(T3)--(T4)--(M4);
                \draw[black] (T5)--(M5);
                \draw[black] (T6)--(M6);
                \draw[black] (T7)--(M7);
                \draw[black] (M8)--(T8) .. controls +(0,-0.6) and +(0,-0.6) .. (T10);
                \draw[black] (M9)--(T9) .. controls +(0,-0.6) and +(0,-0.6) .. (T11);
                \draw[black] (B1)--(M1) .. controls +(0,-0.4) and +(0,-0.4) .. (M3);
                \draw[black] (M2)--(B2)--(B3)--(M4);
                \draw[black] (M5)--(B5)--(B4);
                \draw[black] (M6)--(B6)--(B7);
                \draw[black] (M7)--(B8);
                \draw[black] (M8)--(B9);
                \draw[black] (B10)--(B11);
                \colortop{1,2,3,4,5,6,7}{c1};
                \colortop{8,9}{c2};
                \colortop{10,11}{c3};
                \colormid{1,2}{c1};
                \colormid{3,4,5,6,7,8,9}{c2};
                \colormid{10,11}{c3};
                \colorbot{1,2}{c1};
                \colorbot{3,4,5,6,7}{c2};
                \colorbot{8,9,10,11}{c3};
                \node at (3.25,.5) {1};
                \node at (4.25,.5) {2};
                \node at (5.25,.5) {3};
                \node at (6.25,.5) {4};
                \node at (7.25,.5) {5};
                \node at (8.25,.5) {6};
                \node at (9.25,.5) {7};
                \node at (10.25,.5) {8};
                \node at (11.25,.5) {9};
            \end{tikzpicture}
        \end{array}\\
    \kappa_{\bfa,\bfb,\bfc}(\pi\ast \sigma_X.\nu)&= \begin{array}{c}
            \begin{tikzpicture}[xscale=.75,yscale=.75,line width=1.25pt] 
                \foreach \i in {1,2,3,4,5,6,7,8,9,10,11}  { \path (\i,1.25) coordinate (T\i); \path (\i,.25) coordinate (M\i); \path (\i,-.75) coordinate (B\i); } 
                \filldraw[fill= black!12,draw=black!12,line width=4pt]  (T1) -- (T11) -- (B11) -- (B1) -- (T1);
                \draw[black] (M1)--(T1)--(T2)--(M3);
                \draw[black] (M2)--(T3)--(T4)--(M4);
                \draw[black] (T5)--(M5);
                \draw[black] (T6)--(M6);
                \draw[black] (T7)--(M7);
                \draw[black] (M8)--(T8) .. controls +(0,-0.6) and +(0,-0.6) .. (T10);
                \draw[black] (M9)--(T9) .. controls +(0,-0.6) and +(0,-0.6) .. (T11);
                \draw[black] (B1)--(M2) .. controls +(0,-0.4) and +(0,-0.4) .. (M4);
                \draw[black] (M1)--(B2)--(B3)--(M3);
                \draw[black] (M7)--(B5)--(B4);
                \draw[black] (M6)--(B6)--(B7);
                \draw[black] (M8)--(B8);
                \draw[black] (M5)--(B9);
                \draw[black] (B10)--(B11);
                \colortop{1,2,3,4,5,6,7}{c1};
                \colortop{8,9}{c2};
                \colortop{10,11}{c3};
                \colormid{1,2}{c1};
                \colormid{3,4,5,6,7,8,9}{c2};
                \colormid{10,11}{c3};
                \colorbot{1,2}{c1};
                \colorbot{3,4,5,6,7}{c2};
                \colorbot{8,9,10,11}{c3};
                \node at (3.25,.5) {1};
                \node at (4.25,.5) {2};
                \node at (5.25,.5) {3};
                \node at (6.25,.5) {4};
                \node at (7.25,.5) {5};
                \node at (8.25,.5) {6};
                \node at (9.25,.5) {7};
                \node at (10.25,.5) {8};
                \node at (11.25,.5) {9};
            \end{tikzpicture}
        \end{array}
\end{align*}

Now we can read off that $\kappa_{\bfa,\bfb,\bfc}(\sigma_X(S_2))=\multi{1,1,\ov1,\ov2,\ovov1}$ by looking at the block labeled $2$ in the above diagram.

Consider $\tR=\multi{1,\ov2}\in\kappa_{\bfa,\bfb}(\pi)$. There are two distinct blocks in $ \kappa_{\bfa,\bfb,\bfc}(\pi\ast\nu)$ that restrict to this $\tR$: $\tS=\multi{1,\ov2,\ovov2,\ovov2}$ and $\tS'=\multi{1,\ov2,\ovov3}$. The first corresponds to the blocks labeled $3$ and $4$, while the second corresponds to just the block labeled $5$. From this, we can conclude the following:

\begin{align*}
    W_{1,\tS}=\{3,4\} & & W_{1,\tS'}=\{5\}\\
\intertext{Applying the same logic to $\kappa_{\bfa,\bfb,\bfc}(\pi\ast\nu)$ yields the following:}
    W_{\sigma_X,\tS}=\{4,5\} & & W_{\sigma_X,\tS'}=\{3\}
\end{align*}

Notice that the sets in each column have the same size and the union across rows is always $\{3,4,5\}$.
\end{exam}

\begin{lemma}\label{lem:number_of_sigma} Fix $\pi,\nu\in\Pi_{2(r)}$ such that $\pi|_{\ovb{r}}=\ov\nu_{\ovb{r}}=\rho$ and $\bfa,\bfb,\bfc\in W_{r,k}$. Let $\tmu\in\tilde{\Gamma}_{\kappa_{\bfb,\bfc}(\nu)}^{\kappa_{\bfa,\bfb}(\pi)}$. The set of $\sigma\in \SG_\bfm$ for which there exists a $\gamma\in\Gamma_{\sigma.\nu}^\pi(\tmu)$ is given by
\begin{align*}
    A_{\bfa,\bfb}^\pi B_{\bfb,\bfc}^{\sigma_0.\nu} Y_\bfb^\rho\sigma_0
\end{align*}
for some $\sigma_0\in \SG_\bfb$.
\end{lemma}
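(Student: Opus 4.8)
The plan is to pin down the set of valid $\sigma$ (those with $\Gamma^\pi_{\sigma.\nu}(\tmu)\neq\emptyset$) by fixing one witness $\sigma_0$ and translating the whole question into the situation already resolved by Lemma~\ref{moveMiddleWithoutChangingMSP}. First I would note that any valid $\sigma$ lies in $\SG_\bfb^\rho$: for $\gamma\in\Gamma^\pi_{\sigma.\nu}$ one has $\gamma|_{\ovb r}=\pi|_{\ovb r}=\rho$ and also $\gamma|_{\ovb r}=\ov{\sigma.\nu}|_{\ovb r}=\sigma.\rho$, so $\sigma.\rho=\rho$. From here on I restrict to $\sigma\in\SG_\bfb^\rho$, the range in which $\pi\ast(\sigma.\nu)$ is defined.

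The key structural point is that the part of $\pi\ast(\sigma.\nu)$ away from the middle row is independent of $\sigma\in\SG_\bfb^\rho$: the blocks of $\pi\ast(\sigma.\nu)$ contained in the top row are exactly the blocks of $\pi$ contained in $[r]$, and the blocks contained in the bottom row are exactly the bottom blocks of $\nu$ (since $\sigma$ only permutes the top row of $\nu$). Consequently, for $\sigma,\sigma'\in\SG_\bfb^\rho$ the colored three‑row diagrams $\kappa_{\bfa,\bfb,\bfc}(\pi\ast\sigma.\nu)$ and $\kappa_{\bfa,\bfb,\bfc}(\pi\ast\sigma'.\nu)$ always have the same top‑only blocks and the same bottom‑only blocks, so they are equal if and only if their blocks meeting the middle row agree. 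Moreover each $\gamma\in\Gamma^\pi_{\tau.\nu}$ is obtained from $\pi\ast(\tau.\nu)$ by a $\Break^{-1}$ operation, i.e.\ by choosing a pairing of some top‑only blocks with some bottom‑only blocks and fusing each pair; such fusions never touch the middle row, so the colored blocks of $\gamma$ meeting the middle row coincide with those of $\kappa_{\bfa,\bfb,\bfc}(\pi\ast\tau.\nu)$.

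Next I would fix $\sigma_0$ with $\Gamma^\pi_{\sigma_0.\nu}(\tmu)\neq\emptyset$ (one checks directly that a witness $\gamma_0$ exists for $\tmu\in\tilde\Gamma^\tpi_\tnu$) and prove that, for $\sigma\in\SG_\bfb^\rho$,
\[\Gamma^\pi_{\sigma.\nu}(\tmu)\neq\emptyset\iff\kappa_{\bfa,\bfb,\bfc}(\pi\ast\sigma.\nu)=\kappa_{\bfa,\bfb,\bfc}(\pi\ast\sigma_0.\nu).\]
For $(\Rightarrow)$, a witness $\gamma$ forces the colored middle‑meeting blocks of $\pi\ast\sigma.\nu$ to equal those of $\tmu$, hence—comparing with $\sigma_0$—those of $\pi\ast\sigma_0.\nu$; since the off‑middle parts always agree, the two colored diagrams are equal. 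For $(\Leftarrow)$, take the pairing of top‑only with bottom‑only blocks that builds $\gamma_0$; because these underlying blocks are literally the same for $\sigma$, the same pairing builds a $\gamma$ with $\Break(\gamma)=\pi\ast\sigma.\nu$, whose colored off‑middle blocks equal those of $\gamma_0$ (hence $\tmu_\pm$) and whose colored middle‑meeting blocks equal those of $\kappa_{\bfa,\bfb,\bfc}(\pi\ast\sigma.\nu)=\kappa_{\bfa,\bfb,\bfc}(\pi\ast\sigma_0.\nu)$ (hence those of $\tmu$), so $\kappa_{\bfa,\bfb,\bfc}(\gamma)=\tmu$.

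Finally I would feed this into Lemma~\ref{moveMiddleWithoutChangingMSP}. Put $\nu_0=\sigma_0.\nu$; since $\sigma_0\in\SG_\bfb^\rho$ we get $\ov{\nu_0}|_{\ovb r}=\rho=\pi|_{\ovb r}$, so the lemma applies to the pair $(\pi,\nu_0)$ and yields $\{\tau\in\SG_\bfb^\rho:\kappa_{\bfa,\bfb,\bfc}(\pi\ast\tau.\nu_0)=\kappa_{\bfa,\bfb,\bfc}(\pi\ast\nu_0)\}=A^\pi_{\bfa,\bfb}B^{\nu_0}_{\bfb,\bfc}Y^\rho_\bfb$. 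Writing $\sigma=\tau\sigma_0$ so that $\pi\ast(\sigma.\nu)=\pi\ast(\tau.\nu_0)$, the displayed equivalence becomes: $\sigma$ is valid iff $\tau\in A^\pi_{\bfa,\bfb}B^{\nu_0}_{\bfb,\bfc}Y^\rho_\bfb$, i.e.\ iff $\sigma\in A^\pi_{\bfa,\bfb}B^{\sigma_0.\nu}_{\bfb,\bfc}Y^\rho_\bfb\sigma_0$, which is exactly the asserted description. The main obstacle is the $(\Leftarrow)$ half of the displayed equivalence: one must argue cleanly that nonemptiness of $\Gamma^\pi_{\sigma.\nu}(\tmu)$ depends on $\sigma\in\SG_\bfb^\rho$ only through $\kappa_{\bfa,\bfb,\bfc}(\pi\ast\sigma.\nu)$, which rests on the simple but essential fact that the set‑partition data outside the middle row—precisely the data on which $\Break^{-1}$ acts—is frozen as $\sigma$ ranges over $\SG_\bfb^\rho$; handling the existence of $\sigma_0$ is a minor separate point.
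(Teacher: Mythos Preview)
Your proposal is correct and follows essentially the same route as the paper: reduce to $\sigma\in\SG_\bfb^\rho$, show that $\Gamma^\pi_{\sigma.\nu}(\tmu)\neq\emptyset$ is equivalent to a condition on $\kappa_{\bfa,\bfb,\bfc}(\pi\ast\sigma.\nu)$, fix a witness $\sigma_0$, and invoke Lemma~\ref{moveMiddleWithoutChangingMSP}. The paper phrases the equivalence as $\kappa_{\bfa,\bfb,\bfc}(\pi\ast\sigma.\nu)=\Break(\tmu)$ rather than comparing directly with $\kappa_{\bfa,\bfb,\bfc}(\pi\ast\sigma_0.\nu)$, but once $\sigma_0$ is chosen these are the same statement; your explicit observation that the off-middle blocks of $\pi\ast\sigma.\nu$ are frozen as $\sigma$ varies is exactly what underlies the paper's construction of $\gamma$ from $\Break(\tmu)$.
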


\begin{proof}

Let $\pi,\nu\in\Pi_{2(r)}$. If there exists $\gamma\in\Gamma^\pi_\nu(\tmu)$, then \[\Break(\tmu)=\kappa_{\bfa,\bfb,\bfc}(\pi\ast\nu).\] Conversely if $\Break(\tmu)=\kappa_{\bfa,\bfb,\bfc}(\pi\ast\nu)$ we can construct a $\gamma\in\Gamma_\nu^\pi(\tmu)$ as follows. For each block of $\tmu$ broken into $\tilde T$ in the top and $\tilde B$ in the bottom, find blocks $T$ and $B$ in $\pi$ and $\nu$ for which $\kappa_{\bfa,\bfb}(T)=\tilde T$ and $\kappa_{\bfb,\bfc}(B)=\tilde B$ and connect these blocks in $\pi\ast\nu$. After connecting such a pair for each block broken in $\mu$, we have constructed the desired $\gamma$ (see Example \ref{ex:form_gamma}).

Hence, we are looking for the set of $\sigma\in\SG_\bfb$ for which $\kappa_{\bfa,\bfb,\bfc}(\pi\ast\sigma.\nu)=\Break(\tmu)$. Note that $\pi\ast\sigma.\nu$ only makes sense when $\sigma.\nu|_{[r]}=\rho$, so we need only consider $\sigma\in \SG_\bfb^\rho$.

Choose $\sigma_0$ so that $\Break(\tmu)=\kappa_{\bfa,\bfb,\bfc}(\pi\ast\sigma_0.\nu)$ and write $\nu'=\sigma_0.\nu$. Then the desired set of $\sigma$ is precisely the permutations $\sigma\in \SG_\bfm^\rho$ such that \begin{align*}
     \kappa_{\bfa,\bfb,\bfc}(\pi\ast(\sigma\sigma_0^{-1}).\nu')=\kappa_{\bfa,\bfb,\bfc}(\pi\ast\nu').
\end{align*}

Lemma \ref{moveMiddleWithoutChangingMSP} tells us that this set is precisely those $\sigma$ where \begin{align*}
    \sigma{\sigma_0}^{-1}\in A_{\bft,\bfm}^\pi B_{\bfm,\bfb}^{\nu'}Y_\bfm^\rho
\end{align*} as desired.

\end{proof}

\begin{exam}\label{ex:form_gamma} Here we have an example of a $\tmu, \pi,$ and $\nu$ such that $\Break(\tmu)=\kappa_{\bfa,\bfb,\bfc}(\pi\ast\nu)$. In the diagram of $\tmu$, we represent with dotted lines the connections that must be severed to form $\Break(\tmu)$. To form $\gamma\in\Gamma_\nu^\pi(\tmu)$ we add the corresponding connections to $\pi\ast\nu$, again represented by dotted lines in the diagram of $\gamma$.
    \begin{align*}
    \tmu&=\begin{array}{c}
            \begin{tikzpicture}[xscale=.5,yscale=.5,line width=1.25pt] 
                \foreach \i in {1,2,3,4,5,6}  { \path (\i,1.25) coordinate (T\i); \path (\i,.25) coordinate (M\i); \path (\i,-.75) coordinate (B\i); } 
                \filldraw[fill= black!12,draw=black!12,line width=4pt]  (T1) -- (T6) -- (B6) -- (B1) -- (T1);
                \draw[black, dotted] (T1) .. controls +(+0.50,0) and +(+0.50,0) .. (B1);
                \draw[black] (T1) -- (T2);
                \draw[black] (B1) -- (B2) -- (B3);
                \draw[black] (T3) -- (M3) -- (M2) -- (T3);
                \draw[black] (T4) -- (T5);
                \draw[black, dotted] (T5) .. controls +(+0.50,0) and +(+0.50,0) .. (B5);
                \draw[black] (B4) -- (B5);
                \draw[black] (M4) -- (M5) -- (B6);
                \colortop{1,2,3}{c1};
                \colortop{4}{c2};
                \colortop{5,6}{c3};
                \colormid{1,2}{c1};
                \colormid{3,4}{c2};
                \colormid{5,6}{c3};
                \colorbot{1,2}{c1};
                \colorbot{3}{c2};
                \colorbot{4,5,6}{c3};
            \end{tikzpicture}
        \end{array}\\
        \pi\ast\nu&=\begin{array}{c}
            \begin{tikzpicture}[xscale=.5,yscale=.5,line width=1.25pt] 
                \foreach \i in {1,2,3,4,5,6}  { \path (\i,1.25) coordinate (T\i); \path (\i,.25) coordinate (M\i); \path (\i,-.75) coordinate (B\i); } 
                \filldraw[fill= black!12,draw=black!12,line width=4pt]  (T1) -- (T6) -- (B6) -- (B1) -- (T1);
                \draw[gray] (T2)--(T3);
                \draw[gray] (B1) -- (B3);
                \draw[gray] (T1) -- (M1) .. controls +(0,+0.50) and +(0,+0.50) .. (M3);
                \draw[gray] (T4) -- (T5);
                \draw[gray] (B5) -- (B6);
                \draw[gray] (B4) -- (M4) -- (M5);
                \colortop{1,2,3,4,5,6}{gray};
                \colormid{1,2,3,4,5,6}{gray};
                \colorbot{1,2,3,4,5,6}{gray};
            \end{tikzpicture}
        \end{array}\\
        \gamma&=\begin{array}{c}
            \begin{tikzpicture}[xscale=.5,yscale=.5,line width=1.25pt] 
                \foreach \i in {1,2,3,4,5,6}  { \path (\i,1.25) coordinate (T\i); \path (\i,.25) coordinate (M\i); \path (\i,-.75) coordinate (B\i); } 
                \filldraw[fill= black!12,draw=black!12,line width=4pt]  (T1) -- (T6) -- (B6) -- (B1) -- (T1);
                \draw[gray] (T2)--(T3);
                \draw[gray, dotted] (T3) .. controls +(+0.50,0) and +(+0.50,0) .. (B3);
                \draw[gray] (B1) -- (B3);
                \draw[gray] (T1) -- (M1) .. controls +(0,+0.50) and +(0,+0.50) .. (M3);
                \draw[gray] (T4) -- (T5);
                \draw[gray, dotted] (T5) .. controls +(+0.50,0) and +(+0.50,0) .. (B5);
                \draw[gray] (B5) -- (B6);
                \draw[gray] (B4) -- (M4) -- (M5);
                \colortop{1,2,3,4,5,6}{gray};
                \colormid{1,2,3,4,5,6}{gray};
                \colorbot{1,2,3,4,5,6}{gray};
            \end{tikzpicture}
        \end{array}
\end{align*}
\end{exam}

\begin{lemma}\label{lem:how_many_set_partitions} Let $\mu$ be a set partition of $[m]$ and $\bfa\in W_{m,\ell}$ such that the blocks of $\kappa_\bfa(\mu)=\tmu$ are all sets. Then the number of set partitions $\gamma$ of $[m]$ such that $\kappa_\bfa(\gamma)=\tmu$ is \begin{align*}
    \frac{\bfa_1!\dots\bfa_\ell!}{m(\tmu)!}.
\end{align*}
\end{lemma}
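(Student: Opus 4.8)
The plan is to compute $|\kappa_\bfa^{-1}(\tmu)|$ by applying the orbit--stabilizer theorem to the action of the Young subgroup $\SG_\bfa$ on the set of set partitions of $[m]$. First I would observe that, by the same argument as in the proof of Lemma~\ref{lem:msp_correspondence}, the fiber $\kappa_\bfa^{-1}(\tmu)$ is a single $\SG_\bfa$-orbit, namely the orbit of $\mu$: it contains $\mu$, and if $\kappa_\bfa(\gamma)=\kappa_\bfa(\mu)=\tmu$ then one matches each block of $\gamma$ with a block of $\mu$ having the same image under $\kappa_\bfa$, notes that each matched pair of blocks admits a \emph{unique} colour-preserving bijection (this is exactly where the hypothesis that every block of $\tmu$ is a set enters, since then a block contains at most one element of each colour), and glues these bijections into an element of $\SG_\bfa$ carrying $\mu$ to $\gamma$. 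By orbit--stabilizer this gives $|\kappa_\bfa^{-1}(\tmu)| = |\SG_\bfa|/|\SG_\bfa^\mu| = \bfa_1!\cdots\bfa_\ell!\,/\,|\SG_\bfa^\mu|$, so it remains to identify $|\SG_\bfa^\mu|$.

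I would then compute $|\SG_\bfa^\mu| = m(\tmu)!$ using the semidirect factorization $\SG_\bfa^\mu = X_\bfa^\mu Y_\bfa^\mu$ introduced above. The permutations in $Y_\bfa^\mu$ act only inside the blocks of $\mu$ while preserving colours; since each block of $\mu$ contains at most one element of each colour (as the blocks of $\tmu$ are sets), there are no nontrivial such permutations, so $Y_\bfa^\mu$ is trivial. The permutations in $X_\bfa^\mu$ permute whole blocks of $\mu$, and a colour-preserving permutation can send a block $B$ to a block $B'$ only when $\kappa_\bfa(B)=\kappa_\bfa(B')$; conversely, by the gluing construction above, any permutation of the blocks of $\mu$ that preserves $\kappa_\bfa$-images is realized, and in exactly one way, by an element that maps blocks onto blocks, hence lies in $X_\bfa^\mu$. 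Thus $\SG_\bfa^\mu = X_\bfa^\mu$ is isomorphic to the group that permutes, for each distinct block $\tS$ of $\tmu$, the $m_\tS(\tmu)$ blocks of $\mu$ lying over $\tS$ among themselves, so $|\SG_\bfa^\mu| = \prod_{\tS} m_\tS(\tmu)! = m(\tmu)!$. Substituting gives $|\kappa_\bfa^{-1}(\tmu)| = \bfa_1!\cdots\bfa_\ell!/m(\tmu)!$.

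I do not anticipate a genuine obstacle; the only point that needs care is the systematic use of the hypothesis that the blocks of $\tmu$ are sets, which is precisely what makes the colour-preserving bijection between two blocks over the same $\tS$ unique (so that the gluing is well-defined and $X_\bfa^\mu \cong \prod_{\tS}\SG_{m_\tS(\tmu)}$) and what makes the within-block factor $Y_\bfa^\mu$ trivial. As an alternative to spelling out the stabilizer computation from scratch, once $\kappa_\bfa^{-1}(\tmu)$ is identified with the orbit of $\mu$ one may instead quote the set-partition analogue of the size formulas in Lemma~\ref{lem:subgroup_sizes}, using $|Y_\bfa^\mu|=1$ and $|X_\bfa^\mu| = m(\tmu)!$ in the all-sets case.
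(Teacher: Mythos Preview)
Your proposal is correct and follows essentially the same approach as the paper: both identify $\kappa_\bfa^{-1}(\tmu)$ with the $\SG_\bfa$-orbit of $\mu$, apply orbit--stabilizer, and use the hypothesis that the blocks of $\tmu$ are sets to conclude that the stabilizer $\SG_\bfa^\mu$ consists solely of block-swapping permutations, hence has order $m(\tmu)!$. Your version is more explicit in invoking the $X_\bfa^\mu Y_\bfa^\mu$ factorization (showing $Y_\bfa^\mu$ is trivial and $|X_\bfa^\mu|=m(\tmu)!$), whereas the paper states this more tersely, but the argument is the same.
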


\begin{proof}
    First, observe that \begin{align*}
        \{\gamma:\kappa_\bfa(\gamma)=\tmu\}=\SG_\bfa.\mu.
    \end{align*} By the orbit-stabilizer theorem (see \cite[Proposition 6.8.4]{artin1991algebra}), the size of this orbit is the same as the number of cosets of the corresponding stabilizer $\SG_\bfa^\mu$. Because the blocks of $\tmu$ are sets, no permutation of $\SG_\bfa$ swaps elements within a block of $\mu$. Hence, the permutations that fix $\mu$ are precisely the ones that swap whole blocks, so $\abs{\SG_\bfa^\mu}=m(\tmu)!$. The number of cosets is then obtained using Lagrange's theorem as the quotient of $\abs{\SG_\bfa}$ by the size of this stabilizer. 
\end{proof}

\begin{lemma}\label{lem:size_of_gamma} Fix $\pi,\nu\in \Pi_{2(r)}$ and suppose $\tmu\in\tilde{\Gamma}^{\kappa_{\bfa,\bfb}(\tpi)}_{\kappa_{\bfb,\bfc}(\tnu)}$. If $\Gamma_\nu^\pi(\tmu)\neq\emptyset$, then \[\abs{\Gamma^\pi_\nu(\tmu)}=\frac{m(\tpi_+)!m(\tnu_-)!}{m(\tmu_\pm)!}.\]
\end{lemma}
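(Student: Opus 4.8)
The plan is to realize every $\gamma\in\Gamma^\pi_\nu$ as an ``un-breaking'' of $\pi\ast\nu$ and then count those un-breakings that paint to $\tmu$. Recall that $\gamma\in\Gamma^\pi_\nu$ exactly when $\Break(\gamma)=\pi\ast\nu$, and that $\Break$ only touches the blocks of $\gamma$ having no vertex in the middle row; hence $\gamma$ is obtained from $\pi\ast\nu$ by merging some of its free blocks, namely the blocks contained entirely in $[r]$ (these paint to the multiset $\tpi_+$ under $\kappa_\bfa$) and the blocks contained entirely in $\ovovb{r}$ (the double-barred versions of the blocks painting to $\tnu_-$). Requiring that reapplying $\Break$ recover the original free blocks forces each merged group to contain at most one top block and at most one bottom block and forbids merging a free block into a middle-touching block; so the un-breakings of $\pi\ast\nu$ are precisely the partial matchings between its top free blocks and its bottom free blocks, with matched pairs fused. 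By the hypothesis $\Gamma^\pi_\nu(\tmu)\neq\emptyset$ together with the argument in the proof of Lemma~\ref{lem:number_of_sigma}, $\Break(\tmu)=\kappa_{\bfa,\bfb,\bfc}(\pi\ast\nu)$, so $\tmu$ is the same sort of un-breaking in the painted world; since un-breaking never alters the middle-touching blocks, the condition $\kappa_{\bfa,\bfb,\bfc}(\gamma)=\tmu$ is a condition on the free blocks alone.

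With that reduction in hand I would introduce the bookkeeping. Let $\alpha_1,\dots,\alpha_s$ be the distinct $\kappa_\bfa$-colorings of the top free blocks of $\pi\ast\nu$, with multiplicities $a_1,\dots,a_s$, so $\prod_t a_t!=m(\tpi_+)!$, and let $\beta_1,\dots,\beta_u$ be the distinct colorings of the bottom free blocks with multiplicities $b_1,\dots,b_u$, so $\prod_w b_w!=m(\tnu_-)!$ (double-barring being a bijection). The multiset $\tmu_\pm$ is then described by integers $x_t$ (number of top-only blocks of coloring $\alpha_t$), $y_w$ (bottom-only blocks of coloring $\beta_w$), and $z_{t,w}$ (bridge blocks whose top part is $\alpha_t$ and whose bottom part is the double-barred $\beta_w$); each of these types is a single distinct block of $\tmu_\pm$, so $m(\tmu_\pm)!=\prod_t x_t!\,\prod_w y_w!\,\prod_{t,w}z_{t,w}!$. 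The key point is that the identity $\Break(\tmu)=\kappa_{\bfa,\bfb,\bfc}(\pi\ast\nu)$ is exactly the system $x_t+\sum_w z_{t,w}=a_t$ and $y_w+\sum_t z_{t,w}=b_w$, and this is where the nonemptiness hypothesis is used: it guarantees the system is satisfied, making the count below consistent.

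Finally I would do the count. Specifying a $\gamma\in\Gamma^\pi_\nu(\tmu)$ is the same as: for each $t$, partitioning the $a_t$ labelled top free blocks of coloring $\alpha_t$ into groups of sizes $z_{t,1},\dots,z_{t,u},x_t$ (the last group left unmatched), which can be done in $\tfrac{a_t!}{x_t!\prod_w z_{t,w}!}$ ways; for each $w$, partitioning the $b_w$ bottom free blocks of coloring $\beta_w$ analogously in $\tfrac{b_w!}{y_w!\prod_t z_{t,w}!}$ ways; and for each pair $(t,w)$, choosing a bijection between the two selected groups of size $z_{t,w}$, in $z_{t,w}!$ ways. Multiplying and cancelling one factor of $\prod_{t,w} z_{t,w}!$ gives $\tfrac{\prod_t a_t!\,\prod_w b_w!}{\prod_t x_t!\,\prod_w y_w!\,\prod_{t,w} z_{t,w}!}=\tfrac{m(\tpi_+)!\,m(\tnu_-)!}{m(\tmu_\pm)!}$, as claimed. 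The main obstacle here is not a hard estimate but keeping the combinatorics honest: carefully justifying that the valid un-breakings are exactly the partial matchings, that the middle-touching blocks agree automatically so that only the free blocks are constrained, and that the nonemptiness hypothesis supplies precisely the consistency needed for the product of multinomial coefficients to count the intended set.
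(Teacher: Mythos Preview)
Your proof is correct and follows essentially the same approach as the paper: both reduce $\Gamma^\pi_\nu(\tmu)$ to the set of partial matchings between the top free blocks and the bottom free blocks of $\pi\ast\nu$ that paint to $\tmu_\pm$, and then count these by color class. The only difference is cosmetic: the paper packages the final count by invoking the orbit--stabilizer count of Lemma~\ref{lem:how_many_set_partitions} (with the ``colors'' being the distinct multisets $\alpha_t,\beta_w$), whereas you unroll that same count explicitly as a product of multinomial coefficients times the $z_{t,w}!$ bijections, arriving at the identical expression $\tfrac{m(\tpi_+)!\,m(\tnu_-)!}{m(\tmu_\pm)!}$.
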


\begin{proof} Let $\gamma\in\Gamma_\nu^\pi(\tmu)$. Because $\gamma$ differs from $\pi\ast\nu$ by connecting some number of blocks in the very top and very bottom, we can recover $\gamma$ uniquely from the partial matching of blocks of $(\pi\ast\nu)_\pm$ induced by $\gamma_\pm$. The question then becomes how many set partitions $\rho$ on the blocks of $(\pi\ast\nu)_\pm$ there are such that $\kappa_{\bfa,\bfc}(\rho)=\tmu_\pm$. Because we are only connecting blocks on top to blocks on bottom, we can apply Lemma \ref{lem:how_many_set_partitions} where the $\ell$ colors are the different multisets that appear in $\tmu_\pm$. The number is then \begin{align*}
    \frac{m(\tmu_\pm|_{[k]})!m(\tmu_\pm|_{\ovovb{k}})!}{m(\tmu_\pm)!}&=\frac{m(\tpi_+)!m(\nu_-)!}{m(\tmu_\pm)!}
\end{align*} where $\tpi=\kappa_{\bfa,\bfb}(\pi)$ and $\tnu=\kappa_{\bfb,\bfc}(\nu)$.
\end{proof}

\bibliography{bibliography}{}
\bibliographystyle{alpha}

\end{document}